\newcommand{\defn}[1]{\textcolor{blue}{\emph{#1}}}
\newcommand*{\doi}[1]{doi: \href{https://dx.doi.org/#1}{\urlstyle{rm}\nolinkurl{#1}}}
\newcommand*{\arxiv}[1]{arXiv:  \href{https://arxiv.org/abs/#1}{\urlstyle{rm}\nolinkurl{#1}}}
\let\oldproofname=\proofname
\renewcommand{\proofname}{\rm\bf{\oldproofname}}
\newcommand{\RR}{\mathbb R}
\newcommand{\CC}{\mathbb C}
\newcommand{\QQ}{\mathbb Q}
\newcommand{\QQQ}{\mathcal Q}
\newcommand{\R}{\mathbb R}
\newcommand{\bna}{\begin{eqnarray}}
\newcommand{\ena}{\end{eqnarray}}
\newcommand{\ba}{\begin{eqnarray*}}
\newcommand{\ea}{\end{eqnarray*}}
\newcommand{\bs}[1]{}
\newcommand{\iprod}[2]{\left\langle {#1}, {#2}\right\rangle}
\newcommand{\f}{{\mathbf f}}
\newcommand{\corank}{\operatorname{corank}}
\newcommand{\stressedCorank}{\operatorname{stressedCorank}}
\newcommand{\eps}{\varepsilon}
\newtheorem{theorem}{Theorem}[section]
\newtheorem{corollary}[theorem]{Corollary}
\newtheorem{lemma}[theorem]{Lemma}
\newtheorem{proposition}[theorem]{Proposition}
\newtheorem{definition}[theorem]{Definition}
\newcommand{\ra}{\rangle}
\newcommand{\la}{\langle}
\newcommand{\ST}{{\rm Str}}
\newcommand{\GST}{{\rm Gstr}}
\newcommand{\FST}{{\rm Fstr}}
\newcommand{\PGST}{{\rm PGstr}}
\newcommand{\GOR}{{\rm GOR}}
\newcommand{\LL}{{\rm LSS}}
\newcommand{\Rank}{{\rm Rank}}
\newcommand{\CGOR}{{\rm CGOR}}
\def\p{{\bf p}}
\def\e{{\bf e}}
\def\q{{\bf q}}
\def\0{{\bf 0}}
\let\oldv=\v
\def\v{{\bf v}}
\def\w{{\bf w}}
\def\e{{\bf e}}
\def\x{{\bf x}}
\begin{document}
\title{
General Position Stresses}

\author{Robert Connelly
\and
Steven J. Gortler
\and
Louis Theran}
\date{}
\maketitle

\begin{abstract}
Let $G$ be a graph with $n$ vertices, and $d$ be a
target dimension.
In this paper we study the set of
rank $n-d-1$ matrices that are
equilibrium stress matrices
for at least one
(unspecified) $d$-dimensional framework of $G$ in
general position.
In particular, we show that this set is
algebraically irreducible.
Likewise, we show that the set of frameworks with such
equilibrium stress matrices is irreducible.
As an application, this leads to a new and direct
proof that
every generically globally rigid graph has a generic
framework that is universally rigid.
\end{abstract}
\section{Introduction} \label{section:introduction}

Equilibrium stresses are an essential tool in the study of graph rigidity.
Stresses can carry information about local, global and universal rigidity for specific and generic frameworks of a graph.
An equilibrium stress of a framework $(G,\p)$ is a vector
$\omega$ in the cokernel of its rigidity matrix $R(\p)$.
Once $\p$ is fixed, the equilibrium stresses are given by the solutions to
\begin{equation}\label{eq: equil1}
     \omega^t R(\p) = 0
\end{equation}
and form a linear space.  The dimension $s$ of the space of stresses
determines static, and so kinematic, rigidity of $(G,\p)$ via the
Maxwell index theorem
\begin{equation}\label{eq: maxwell-index}
    s - f  = m - dn + \binom{d+1}{2}
\end{equation}
where $f$ is the dimension of the space of non-trivial infinitesimal
flexes\footnote{Formally, this is the quotient of the space
of all flexes by the subspace of trivial flexes.}
and $n$ and $m$ are the number of vertices and edges of the graph $G$.

On the other hand, one can fix a set of edge weights $\omega$
and reformulate the equilibrium condition \eqref{eq: equil1}
as the vector equation
\begin{equation}\label{eq: equil2}
    \sum_{j\neq i} \omega_{ij}(\p_i - \p_j) = 0
    \qquad \text{(at all vertices $i$)}
\end{equation}
where we have $\omega_{ij} = \omega_{ji}$ and
$\omega_{ij} = 0$ for non-edges.  Holding $\omega$ fixed and
treating $\p$ as variable, the l.h.s. of \eqref{eq: equil2}
defines a linear map on $1$-dimensional configurations that
has as its matrix $\Omega$ a symmetric matrix with zeros
on the non-edges and the all ones vector in its kernel.

In recent work on global and universal rigidity \cite{Gortler-Thurston, Gortler-thurston2, ghave},
it has been useful instead to let $\p$ vary and
study the variety of all the equilibrium stresses that a graph $G$
in $d$-dimensions.
This variety is simply the set of symmetric $n$-by-$n$ matrices $\Omega$ of rank $\le n-d-1$,
that have the all ones vector in their kernel and zeros in the entries
corresponding to non edges of $G$.  

For our applications, described below,
we are most interested in knowing when this (real) ``stress variety''
is irreducible. Importantly, irreducibility implies that any subset that satisfies
an extra algebraic condition must be of strictly lower dimension. In turn, this
means that various properties can be shown to either hold almost everywhere,
or to hold almost nowhere. Similarly,
this irreducibility means that 
we can talk about  ``generic'' stresses in this variety
and how they behave.

The general question of when a linear section of a determinantal 
variety by coordinate hyperplanes is irreducbile has been 
studied in the algebro-geometric community \cite{merle, eisen, herz, conca,kumar}. 
Unfortunately for us,
many of the known irreducibility results only apply for 
matrices of very high or very low rank and, so, do not suffice 
for our applications.


Another related question studied in the literature is the following:
given a graph $G$, which frameworks $(G,\p)$ support
an unexpectedly high-dimensional spaces of stresses? Works such as
\cite{White-Whiteley} and \cite{karpenkov} approach the problem from
the point of view of the Grassmann-Cayley algebra (see, e.g., \cite{BBR}).
These only give an implicit description, as opposed to the kind of
structural results we are after.
Universality results for stresses, such as \cite{panina}, suggest that
the limitations of the above-mentioned results arise for a fundamental
reason, namely that there are pairs $(G,d)$ for which the $d$-dimensional
stress variety of $G$ can be very complicated.  This would seem to rule out
an explicit description for all graphs.

\paragraph{Results and guide to reading}
In this paper we take a different approach.  Instead of looking 
to parameterize \emph{all} stresses supported by some $d$-dimensional 
framework, we restrict our attention to the equilibrium 
stresses for $G$ that have rank exactly $n - d - 1$
and such that any set of $n - d - 1$
columns is linearly independent.
We call these ``general position stresses'' 
(or, for short, ``Gstresses'') 
and denote the set of them by $\GST$. 
For our applications, a 
good understanding of such $\GST$ is usually enough.

Let us start with  the special case
in which $G$ is a  a $(d+1)$-connected graph 
graph that contains a $K_{d+1}$ subgraph.  A consequence 
of the ``rubber band'' theorem of Linial--Lovász--Widgerson \cite{llw}
is that we can put any generic edge weights on the edges of $G$ outside 
the distinguished $K_{d+1}$, and there will be a unique assignment of weights
to the edges of this $K_{d+1}$ so that
we will obtain a general position 
stress of rank $n - d - 1$ for $G$.  Analyzing the resulting map 
more closely, we obtain a parameterization of the general 
position stresses for $G$.
\begin{theorem}
  Let $G$ be a $(d+1)$-connected graph with $m$ edges  and suppose that $H$ is a
  $K_{d+1}$ subgraph of of $G$.
  Then $\GST$ is irreducible, of dimension $m - \binom{d+1}{2}$, and parameterized by
  a Zariski open subset of $\RR^{m - \binom{d+1}{2}}$,
  representing the weights on the edges of $G\setminus H$,
  under a rational map.
\end{theorem}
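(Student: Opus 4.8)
The plan is to make precise the "rubber band" construction sketched just before the theorem and then analyze the resulting rational map. Fix the $K_{d+1}$ subgraph $H$ on vertex set, say, $\{1,\dots,d+1\}$. Given weights $\omega$ on the edges of $G\setminus H$, the Linial--Lovász--Wigderson framework lets us build the "rubber band" equilibrium: treat the $d+1$ vertices of $H$ as pinned and solve the linear equilibrium equations \eqref{eq: equil2} at the remaining $n-d-1$ vertices, which (by $(d+1)$-connectivity and the fact that the $\omega$ on $G\setminus H$ are positive/generic) has a unique solution. This produces a configuration-valued rational function, and from it a well-defined symmetric matrix $\Omega=\Omega(\omega)$ with zeros on non-edges, the all-ones vector in its kernel, and rank at most $n-d-1$; the entries of $\Omega$ on the edges of $H$ are themselves rational functions of $\omega$ forced by the equilibrium condition. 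So the first step is: write down this rational map $\Phi\colon U \dashrightarrow \GST$, where $U$ is a Zariski open subset of $\RR^{m-\binom{d+1}{2}}$ (the complement of the locus where the relevant linear system is singular), and check that its image lies in $\GST$, i.e. that generically the rank is exactly $n-d-1$ and every $n-d-1$ columns are independent. The rank-exactly and general-position claims should follow because generic rubber-band configurations are themselves in general position and in "general SDR position," a standard consequence of LLW-type arguments; alternatively one exhibits a single explicit $\omega$ for which it holds and invokes that a Zariski-open condition that holds once holds generically.

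The second step is to go the other way: show $\Phi$ is dominant, in fact that it has a rational inverse, so that $\GST$ is the image (up to closure) of an irreducible variety under a rational map and hence irreducible of the same dimension. Given any $\Omega\in\GST$ of rank $n-d-1$, its columns indexed by $\{1,\dots,d+1\}$ are independent (general position), so $\Omega$ has a kernel of dimension $d+1$ containing the all-ones vector, and one can canonically extract from the kernel a $d$-dimensional configuration $\p$ (this is the standard "stress matrix kernel gives configuration" step, e.g. pick the unique basis of the kernel that restricts to the standard affine frame on vertices $1,\dots,d+1$). Reading off the off-diagonal entries of $\Omega$ on $G\setminus H$ recovers $\omega$, and one checks $\Phi(\omega)=\Omega$; this shows $\Phi$ is a birational map onto its image. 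Combined with step one, $\GST \subseteq \overline{\operatorname{im}\Phi}$ and $\operatorname{im}\Phi\subseteq\GST$, so $\GST$ is irreducible. Counting dimensions: the source has dimension $m-\binom{d+1}{2}$ and the map is generically injective, giving $\dim\GST = m-\binom{d+1}{2}$.

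The main obstacle, I expect, is the bookkeeping around the $K_{d+1}$: one must verify carefully that the equilibrium system at the $n-d-1$ free vertices is generically invertible (this is where $(d+1)$-connectivity enters, via a matching/flow argument showing the corresponding submatrix of the weighted Laplacian is generically nonsingular), and that the resulting $\Omega$ genuinely has rank $n-d-1$ rather than something smaller — ruling out "accidental" extra stresses. A clean way to handle both is to produce one witness: choose $\p$ to be a generic configuration of $G$, let $\omega_0$ be any generic element of the (at least one-dimensional, by Maxwell counting once $m$ is large enough — but here we only need the $\GST$ stratum to be nonempty, which can be arranged or assumed) stress space that is a Gstress, restrict it to $G\setminus H$, and check $\Phi$ is defined and equals $\Omega$ there; then every condition "rank $=n-d-1$, all $(n-d-1)$-subsets of columns independent, Jacobian of $\Phi$ has full rank" is Zariski-open and satisfied at this point, hence generically. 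A secondary subtlety is that we are working over $\RR$: irreducibility of the real variety needs the standard fact that the image of an irreducible real variety under a rational map defined at a smooth real point has irreducible Zariski closure, and that $\GST$, being cut out inside that image by no further equations, coincides with it; I would state this explicitly and cite the relevant real-algebraic-geometry lemma rather than reprove it.
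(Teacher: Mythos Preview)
Your overall approach is the same as the paper's: build the rubber-band map $\Phi$ from weights on $G\setminus H$ to stresses (the paper's Lemma~\ref{lem:param1}), then invert by reading off those weights from a given $\Omega\in\GST$ (Lemma~\ref{lem:param2}). A few points where your plan diverges or needs correction.

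Your ``clean'' witness strategy---take a \emph{generic} configuration $\p$ and pick a Gstress in its stress space---does not work: a generic framework has a rank-$(n-d-1)$ stress iff $G$ is generically globally rigid (Theorems~\ref{thm:suff} and~\ref{thm:necc}), which is strictly stronger than the $(d+1)$-connectivity hypothesis. The paper's witness is instead \emph{positive} weights: for $\w_{G\setminus H}>0$ the LLW theory directly gives non-singularity of the equilibrium system and, via \cite[Theorem~2.4]{llw}, affine general position of the resulting $\p$; both conditions are Zariski-open and so extend. Your earlier appeal to ``LLW-type arguments'' is the correct route; the generic-configuration alternative should be dropped.

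Two smaller items. You assert the entries of $\Omega$ on $H$ are ``forced by the equilibrium condition'' without saying how; the paper makes this explicit by observing that the resultant forces on $H$ form an equilibrium load (Lemma~\ref{lem: eq load support}) which the statically rigid, independent $(H,\p_H)$ resolves uniquely (Lemma~\ref{lem: static inf})---this is where the $K_{d+1}$ hypothesis is used. And the rank being \emph{exactly} $n-d-1$ is not argued by a witness but directly: the non-singular linear system means $\p$ is uniquely determined by $\p_H$, so the kernel of $\Omega$ is exactly $(d+1)$-dimensional. Finally, your worry about real irreducibility and passage through Zariski closures is unnecessary: the paper shows $\Phi$ is a bijection from its Zariski-open domain onto $\GST$, so irreducibility transfers immediately. (A small slip: you write ``columns indexed by $\{1,\dots,d+1\}$ are independent''---general position is about $(n-d-1)$-subsets of columns; what you need is the Gale-dual statement, Lemma~\ref{lem:dual}, that the kernel framework is in affine general position.)
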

The existence of a $K_{d+1}$ subgraph is important as a hypothesis 
if we want to use rubber bands: if we were to put generic weights on 
\emph{every} edge, we will obtain an equilibrium stress matrix with 
rank $n - 1$, which correspond to frameworks with all points on 
top of each other.  On the other hand, if we pick some arbitrary set 
of $m - \binom{d+1}{2}$ edges and give them generic weights, the rubber
band construction no longer applies.

To deal with the general setting, where there may not be a $K_{d+1}$
subgraph, we will replace rubber bands by ``orthogonal representations'',
which were introduced by Lovász--Saks--Schrijver \cite{Lovasz-Schrijver},
as the basis of a parameterization of $\GST$.

Our main result is that $\GST$ is also  well behaved in this general setting.
If $G$ has $m$ edges and is $(d+1)$-connected then $\GST$ is
an irreducible, quasi-projective variety of dimension $m - \binom{d+1}{2}$.
If $G$ is not $(d+1)$-connected, then $\GST$ is empty.

As an immediate application of this result, we can obtain a new direct
proof the main result from~\cite{ghave}, namely that a graph that
is generically globally rigid in $d$-dimensions must have a generic
$d$-dimensional framework that is super stable (and thus universally rigid).

Next, we can turn things around and look at the set of $d$-dimensional frameworks
that are general position and have a stress of rank $n-d-1$.
We call these the ``Gstressable'' frameworks.
Indeed, we will see that the Gstressable frameworks
are exactly those frameworks with a Gstress.
Our main result on this topic is that the
set of Gstressable frameworks  is an irreducible constructible set of
configurations.

We will see that when $G$ is not $d+1$ connected,
the Gstressable set is empty, and when
$G$ is generically globally rigid,  this set includes almost all $d$-dimensional frameworks.
So the most interesting case is for graphs in-the-middle of these two extremes.
As we will see, for such graphs, if a  framework is Gstressable, it must be the case that
its rigidity matrix has dropped rank. For example if $G$ is generically locally
rigid, then a Gstressable framework must be infinitesimally flexible.
Still,
not every framework with a deficient rigidity matrix is  Gstressable.

Finally, we show that the requirement of general position can be relaxed
somewhat without sacrifice. In particular we define a framework as ``Fstressable''
if it has a stress of rank $n-d-1$, and each vertex has a full dimensional
affine span. We then show that any Fstressable can be approximated by
Gstressable frameworks. This places the Fstressable set in the
Euclidean and thus Zariski closure of the Gstressable set, rendering it
irreducible as well.

As mentioned above, we are most interested in the property of
irreducibility, but along the way, we will take care to prove
what we can about the algebraic object-types of the sets we encounter.
The nicest objects that we will deal with
are ``algebraic''.
Such an object is cut out  of $\RR^N$ using algebraic equalities.
The next type is ``quasi-projective''. Such an object is cut out
using algebraic
equalities and non-equalities ($\neq$).
The next larger class is ``constructible''.
This class allows for finite unions of quasi-projective objects,
or alternatively, a finite number of Boolean operations over algebraic
sets. These objects are less uniform, but when irreducible, still
contain almost all of the points in their Zariski-closure.
Finally there are the semi-algebraic sets.
Such an object is cut out
using algebraic
equalities  non-equalities ($\neq$), and inequalities.
Semi-algebraic objects
can comprise very small
portions of their Zariski-closures.

This paper relies essentially on the construction of
GORs from~\cite{Lovasz-Schrijver}, the construction of PSD stresses from GORs from
\cite{Alfakih-conn}, and the detailed study of the combination 
from~\cite{ghave}.  The main innovation of this paper is to use 
GORs in the complex setting. This will allow us to to construct and study
the full set of Gstresses (real, but of unconstrained signature).

\section{Background}\label{sec:background}

\subsection{Stresses}

In this paper, we will always fix some dimension
$d$ and some graph $G$ with $n$ vertices and $m$ edges.

Let $\p$ be a \defn{configuration} of $n$ points in $\RR^d$.
Let $\omega =( \dots,
 \omega_{ij}, \dots )$ be an assignment of a real scalar
 $\omega_{ij}=\omega_{ji}$ to each edge, $\{i,j\}$ in $G$.  (We have
 $\omega_{ij}=0$, when $\{i,j\}$ is not an edge of $G$.)
We say that
 $\omega$ is an \defn{equilibrium stress vector}
for $(G,\p)$ if the vector equation
\begin{equation}\label{eqn:equilibrium}
 \sum_j \omega_{ij}(\p_i-\p_j) = 0
 \end{equation}
holds for all vertices $i$ of $G$.
The equilibrium stress vectors of $(G,\p)$ form the co-kernel of its
rigidity matrix $R(\p)$.

We associate an $n$-by-$n$
\defn{equilibrium stress matrix} $\Omega$
of $(G,\p)$
to a stress vector $\omega$ of $(G,\p)$,
by setting the $i,j$th entry of $\Omega$ to
$-\omega_{ij}$, for $i \ne j$, and the diagonal entries of $\Omega$
are set such that the row and column sums of $\Omega$ are zero.

For each of the  $d$ spatial dimensions,
if we define a vector $\v$ in $\RR^n$ by collecting  the  associated
coordinate over all of the points in $\p$, we have $\Omega \v=0$.
Thus if the
dimension of the affine span of the vertices $\p$ is $d$, then the
rank of $\Omega$ is at most $n-d-1$, but it could be less.

We define the equilibrium stress matrices
of $G$ in dimension $d$ to be the union of the sets of 
equilibrium
stress matrices for $(G,\p)$
over all $\p$ in $\RR^d$. These are the
symmetric $n$-by-$n$ matrices of rank
$n-d-1$ or less, with the all ones vector in
its kernel, and zero entries corresponding to non-edge pairs of distinct vertices.

In what follows we will drop the word
``equilibrium'' as well as the dimension
for brevity, and just call these stress matrices for $G$.

\subsection{Kernel frameworks}
We will also go in the reverse direction.
We will start with a stress matrix and find its associated
frameworks.
To this
end, we introduce homogeneous coordinates for a
configuration $\p$ of $n$ points in $\RR^d$ by
$\hat \p_i = (\p_i ; 1)$; i.e., appending a $1$
in the last dimension.  Denote by $\hat \p$ the resulting vector
configuration in $\RR^{d+1}$ .  We note
for later that $\p$ is in \defn{affine general
position} (no subset of $k \le d+1$ points of $\RR^d$
on a $(k-2)$-dimensional affine space)
if and only if $\hat \p$ is in
\defn{linear general position}
(no subset of $k \le d+1$ vectors of $\RR^{d+1}$ in a $(k-1)$-dimensional linear space).

Now, fixing a stress matrix $\Omega$ for $G$,
we say that $(G,\p)$ is a \defn{kernel framework}
for $\Omega$ if $\Omega P =0$, where $P$
has as its rows the vectors $\hat\p^t_i$.
If $\p$ has $r$-dimensional affine span and
is a kernel framework for $\Omega$,
then $\Omega$ has rank at most $n - r - 1$,
but it might be lower.

\subsection{Rigidity}
The following definitions are standard.

Two frameworks $(G,\p)$ and
$(G,\q)$ are \defn{equivalent} if
\[
    \| \p_j - \p_i\| = \| \q_j -  \q_i\|
    \qquad \text{(all edges $\{i,j\}$ of $G$)}
\]
They are \defn{congruent} if there is a Euclidean motion
$T$ of $\RR^d$ so that
\[
    \q_i = T(\p_i)\qquad \text{(all verts. $i$ of $G$)}
\]
A framework $(G,\p)$ is \defn{rigid} if there is a neighborhood $U\ni \p$
of configurations in $\RR^d$
so that if $\q\in U$ and $(G,\q)$ is equivalent to $(G,\p)$, then $q$ is
congruent to $p$.  A framework $(G,\p)$ is \defn{globally rigid} if \emph{any}
$(G,\q)$ in $\RR^d$
equivalent to $(G,\p)$ is congruent to it.
framework $(G,\p)$ is \defn{universally rigid} if \emph{any}
$(G,\q)$, in any dimension,  equivalent to $(G,\p)$ is congruent to it.

A configuration is \defn{generic} if its
coordinates are algebraically independent over $\QQ$.
\begin{theorem}[\cite{Asimow-Roth-I,Gortler-Thurston}]
Let $d$ be a dimension and $G$ a graph.  Then either every generic
framework $(G,\p)$ in dimension $d$
is (globally) rigid or no generic framework is
(globally) rigid.
\end{theorem}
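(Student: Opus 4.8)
The plan is to reduce both assertions --- the one for ``rigid'' and the one for ``globally rigid'' --- to the following transfer principle, which I would isolate first: \emph{if $S\subseteq\RR^{dn}$ is constructible and defined over $\QQ$, then $S$ contains either every generic configuration or none}. Its proof is standard: a generic $\p$ lies on no proper $\QQ$-Zariski-closed set, hence in every nonempty $\QQ$-Zariski-open set; writing $S=\bigcup_i(U_i\cap V_i)$ with each $U_i$ open, $V_i$ closed, all defined over $\QQ$, if one generic $\p$ lies in $S$ then $\p\in U_i\cap V_i$ for some $i$, which forces $V_i=\RR^{dn}$ and $U_i\neq\emptyset$, so that single piece already contains all generic configurations. (The one-sided case I lean on most is that a nonempty $\QQ$-Zariski-open set contains every generic configuration.)

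For the \emph{rigidity} version I would invoke Asimow--Roth \cite{Asimow-Roth-I}: a generic framework $(G,\p)$ in $\RR^d$ is rigid if and only if it is infinitesimally rigid, i.e.\ if and only if $\operatorname{rank}R(\p)=dn-\binom{d+1}{2}$. (The nontrivial direction is the constant-rank argument: a generic $\p$ is a regular point of the edge-length map, so the fiber through $\p$ is locally a smooth manifold of dimension $dn-\operatorname{rank}R(\p)$, and rigidity forces it to agree near $\p$ with the congruence orbit of $\p$, a smooth manifold of dimension $\binom{d+1}{2}$ because a generic configuration with $n\ge d+1$ has full-dimensional affine span.) For such $\p$ the trivial infinitesimal flexes already span a $\binom{d+1}{2}$-dimensional subspace of $\ker R(\p)$, so the equality above is equivalent to $\operatorname{rank}R(\p)\ge dn-\binom{d+1}{2}$ --- the nonvanishing of some $\big(dn-\binom{d+1}{2}\big)$-minor of $R(\p)$, a $\QQ$-Zariski-open and hence $\QQ$-constructible condition on $\p$. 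The transfer principle now gives the dichotomy. (The residual range $n\le d$ is elementary and treated by hand: a generic framework there is rigid in $\RR^d$ exactly when $G$ is complete, which depends on $G$ alone.)

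For the \emph{global rigidity} version I would use the characterization of Gortler--Healy--Thurston \cite{Gortler-Thurston}: for $n\ge d+2$, a generic framework $(G,\p)$ in $\RR^d$ is globally rigid if and only if $(G,\p)$ admits an equilibrium stress matrix of rank $n-d-1$ (and for $n\le d+1$, global rigidity again amounts to $G$ being complete). It then remains to show that ``$(G,\p)$ has a stress matrix of rank $n-d-1$'' is a generic condition. Here one must resist the naive projection argument, since images of $\QQ$-constructible sets over $\RR$ are in general only semialgebraic; but the condition is structured enough to work. On the $\QQ$-Zariski-open locus $W\subseteq\RR^{dn}$ where $R(\p)$ attains its generic rank --- which therefore contains all generic configurations --- the equilibrium stresses $\{\omega:\omega^{t}R(\p)=0\}$ form a $\QQ$-algebraic vector bundle; and whether the fiber over $\p$ contains some $\omega$ with $\operatorname{rank}\Omega(\omega)\ge n-d-1$, for $\Omega(\omega)$ the associated stress matrix, is the (non)vanishing on that linear subspace of a fixed family of minors --- polynomials in $\omega$ that do \emph{not} depend on $\p$. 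Since the set of linear subspaces on which a fixed polynomial vanishes identically is Zariski-closed, the set of $\p\in W$ carrying a rank-$(n-d-1)$ stress matrix is $\QQ$-Zariski-open in $W$, hence --- after discarding the proper $\QQ$-subvariety $\RR^{dn}\setminus W$, which contains no generic point anyway --- $\QQ$-Zariski-open in $\RR^{dn}$. The transfer principle again yields the dichotomy.

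The genuinely hard content is imported rather than re-derived: the crux of the global-rigidity case, that a \emph{generic} globally rigid framework necessarily carries a stress matrix of rank exactly $n-d-1$, is the main theorem of \cite{Gortler-Thurston}, and the local-to-infinitesimal equivalence is \cite{Asimow-Roth-I}; I would cite both as black boxes. The only real work left is the bookkeeping above, and its one delicate point is sidestepping the semialgebraic trap by observing that the rank-of-the-stress-matrix condition lives on a vector bundle over the generic-rank locus and is phrased by $\p$-independent polynomials.
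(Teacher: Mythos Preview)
The paper does not prove this theorem: it is stated in Section~\ref{sec:background} as a cited background result, with the rigidity half attributed to \cite{Asimow-Roth-I} and the global rigidity half to \cite{Gortler-Thurston}, and no argument is supplied. So there is no ``paper's own proof'' to compare your proposal against.

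That said, your outline is a correct reconstruction of how these results are established in the cited sources. The rigidity case is exactly the Asimow--Roth argument: at a generic $\p$, rigidity coincides with infinitesimal rigidity, and the latter is the nonvanishing of a maximal minor of $R(\p)$, manifestly $\QQ$-Zariski-open. For global rigidity you correctly import the hard direction from \cite{Gortler-Thurston} (a generic globally rigid framework carries a rank-$(n-d-1)$ stress) and Connelly's converse as black boxes, leaving only the check that ``has a rank-$(n-d-1)$ stress'' is a generic condition. Your vector-bundle argument for this step is sound: on the locus $W$ of maximal rigidity-matrix rank the stress spaces vary algebraically, the map $\omega\mapsto\Omega(\omega)$ is a fixed $\QQ$-linear map independent of $\p$, and identical vanishing of all $(n-d-1)$-minors of $\Omega$ on a linear subspace is Zariski-closed in the Grassmannian. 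One small point you leave implicit but should make explicit: on $W$ intersected with the full-affine-span locus (also $\QQ$-open and containing every generic $\p$), every stress matrix automatically has rank at most $n-d-1$, so ``rank $\ge n-d-1$'' and ``rank $= n-d-1$'' coincide there; this is what makes the condition genuinely open rather than merely constructible.
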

We summarize this theorem by saying that rigidity \cite{Asimow-Roth-I} and global
rigidity \cite{Gortler-Thurston} are
\defn{generic properties}.
If every generic framework $(G,\p)$ in dimension $d$ is (globally)
rigid, we say that $G$ is \defn{generically (globally) rigid}
in dimension $d$.

\paragraph{Rigidity Certificates}
One important way to certify that
a framework is  rigid is via the stronger property of
infinitesimal rigidity:
The \defn{rigidity matrix} $R(\p)$ of a framework $(G,\p)$ is the
matrix of the linear system
\[
    \iprod{\p_j - \p_i}{\p'_j - \p'_i} = 0
    \qquad \text{(all edges $\{i,j\}$ of $G$)}
\]
where the vector configuration $\p'$ is variable.  The kernel
of $R(\p)$ comprises the \defn{infinitesimal flexes} of $(G,\p)$.
When $G$ is a graph with $n\ge d$ vertices, a $d$-dimensional
framework $(G,\p)$ is called \defn{infinitesimally rigid}
when $R(\p)$ has rank $dn - \binom{d+1}{2}$.  Infinitesimal rigidity
implies rigidity \cite{Asimow-Roth-I}.

One important way to certify that
a framework is universally rigid is via
the still stronger property of super stability.
A framework
with $d$-dimensional affine span
is \defn{super stable} if it has a positive semidefinite (PSD)
equilibrium stress matrix $\Omega$ of rank $n - d - 1$
and its edges directions are not on a conic at infinity
(a technical property discussed next).

The \defn{edge directions} of a framework $(G,\p)$ is the configuration
$\e$
of $|E|$ points at infinity $\e_{ij} := \p_j - \p_i$.
 A framework \defn{has its edge directions on a conic at infinity}
if there is a quadric surface $\QQQ$ at infinity
containing all of $\e$.
Notably, if
a framework is infinitesimally rigid, then its edge
directions cannot be on a conic at infinity.

The main connection between these concepts is due to Connelly.
\begin{theorem}[\cite{Connelly-energy}]\label{thm:sstour}
If $(G,\p)$ is super stable, then it is universally rigid.
\end{theorem}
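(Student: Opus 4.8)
The plan is to reprise Connelly's energy argument. Super stability supplies a PSD equilibrium stress matrix $\Omega$ for $(G,\p)$ of rank exactly $n-d-1$ whose edge directions are not on a conic at infinity. Define, for a configuration $\q$ of $n$ points in any $\RR^{d'}$, the energy
\[
E_\Omega(\q) \ :=\ \sum_{\{i,j\}\in E(G)} \omega_{ij}\,\|\q_i-\q_j\|^2 .
\]
Collecting each coordinate of $\q$ across all $n$ points into a vector $\q^{(k)}\in\RR^n$, a direct expansion gives $E_\Omega(\q)=\sum_{k=1}^{d'}(\q^{(k)})^t\Omega\,\q^{(k)}$. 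Since $\Omega$ is PSD we have $E_\Omega\ge 0$ identically, and since $(G,\p)$ is a kernel framework for $\Omega$ we have $\Omega P=0$ (with $P$ the matrix of homogeneous coordinates $\hat\p_i$), hence $E_\Omega(\p)=0$; so $\p$ is a global minimizer of $E_\Omega$.

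First I would take an arbitrary $(G,\q)$, in any dimension $d'$, equivalent to $(G,\p)$. Edge-length equality gives $E_\Omega(\q)=E_\Omega(\p)=0$, and, $\Omega$ being PSD, this forces $\Omega\,\q^{(k)}=0$ for every $k$. Together with $\Omega\mathbf{1}=0$, every homogeneous coordinate vector of $\q$ lies in $\ker\Omega$. Here I would use the \emph{exact} rank hypothesis: $\dim\ker\Omega=n-(n-d-1)=d+1$, and $\ker\Omega$ already contains the $d+1$ vectors $\mathbf{1},\p^{(1)},\dots,\p^{(d)}$ (the columns of $P$), which are linearly independent because $\p$ has $d$-dimensional affine span; hence those vectors span $\ker\Omega$. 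Writing the containment of the coordinate vectors of $\q$ in this span as a matrix identity, we obtain a $(d'+1)\times(d+1)$ matrix $A$ with $\hat\q_i=A\hat\p_i$ for all $i$, and comparing the homogenizing coordinate (using that the $\hat\p_i$ span $\RR^{d+1}$) shows $\q_i=B\p_i+t$ for a fixed $d'\times d$ matrix $B$ and translation $t$: $\q$ is an affine image of $\p$.

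Finally I would invoke the conic-at-infinity hypothesis. Substituting $\q_i=B\p_i+t$ into the edge-length equations yields $(\p_j-\p_i)^t(B^tB-I)(\p_j-\p_i)=0$ for every edge $\{i,j\}$, i.e.\ all edge directions $\e_{ij}$ lie on the quadric at infinity determined by the symmetric matrix $B^tB-I$. Since by hypothesis the edge directions of $(G,\p)$ are not on any conic at infinity, $B^tB=I$, so $B$ has orthonormal columns; then $\q_i=B\p_i+t$ is the restriction to $\p$ of a Euclidean isometry $\RR^d\hookrightarrow\RR^{d'}$, and any two isometric embeddings of a fixed point set differ by a rigid motion. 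Hence $(G,\q)$ is congruent to $(G,\p)$, and $(G,\p)$ is universally rigid.

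The step that needs the most care is the middle one: handling a target dimension $d'$ that may be larger or smaller than $d$, getting the homogenization bookkeeping right, and extracting the affine map cleanly from the kernel containment. This is precisely where the rank-$(n-d-1)$ part of super stability is essential — having merely \emph{some} PSD stress would give $E_\Omega(\q)=0$ but would not force $\q$ to be an affine image of $\p$. Once that is in place, the PSD step (passing from $E_\Omega(\q)=0$ to $\Omega\q^{(k)}=0$) and the conic step are both essentially forced.
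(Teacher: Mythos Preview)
The paper does not supply its own proof of this theorem; it is simply cited from Connelly's paper \cite{Connelly-energy} and used as a black box. Your argument is correct and is precisely the classical energy argument from that reference: the PSD stress gives a nonnegative quadratic energy minimized at $\p$, equivalence forces $\q$ into $\ker\Omega$, the rank hypothesis pins $\ker\Omega$ down to the affine span of $\p$, and the conic condition upgrades the resulting affine map to a Euclidean one. So there is nothing to compare against here, but your write-up faithfully reproduces Connelly's proof.
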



\paragraph{Generic Global Rigidity}

Connelly~\cite{Connelly-global}
proved the following sufficient condition  for
generic global rigidity of a graph.
\begin{theorem}
\label{thm:suff}
If some generic framework $(G,\p)$ in $\RR^d$
has an (even indefinite) equilibrium stress matrix
of rank $n-d-1$, then the graph $G$ is
generically globally rigid in $\RR^d$.
\end{theorem}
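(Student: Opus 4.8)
The plan is to reconstruct Connelly's argument. Since global rigidity is a generic property, it suffices to produce a \emph{single} generic framework $(G,\p)$ in $\RR^d$ that is globally rigid; and since the \emph{maximal} rank attained by a stress matrix of $G$ is itself a generic quantity (it is attained on a Zariski-open set of configurations), the hypothesis lets us take $\p$ generic carrying a stress matrix $\Omega$ of rank $n-d-1$, with stress vector $\omega$. We must then show that every $\q$ with $(G,\q)$ equivalent to $(G,\p)$ is congruent to $\p$. The boundary case $n\le d+1$ is handled directly, so assume $n\ge d+2$; one checks $(G,\p)$ is infinitesimally rigid (forced by the existence of the maximal-rank stress, or: a necessary condition for generic global rigidity, by Hendrickson), so its edge directions do not lie on any conic at infinity.

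\textbf{Reductions.} First, it is enough to show every equivalent $\q$ is an \emph{affine} image of $\p$: if $\q_i=A\p_i+b$ and all edge lengths are preserved, then the quadratic form $\x\mapsto \x^{t}(A^{t}A-I)\x$ vanishes on all edge directions $\p_j-\p_i$, which, not lying on a conic, force $A^{t}A=I$, so $\q$ is congruent to $\p$. Second, because $\p$ has full affine span, the $d+1$ columns of its homogeneous coordinate matrix $P$ (rows $\hat\p_i^{t}$) span $\ker\Omega$ \emph{exactly}, using $\operatorname{rank}\Omega=n-d-1$ and $\Omega P=0$. Hence, writing $Q$ for the homogeneous coordinate matrix of $\q$, the statement ``$\q$ is an affine image of $\p$'' is equivalent to ``the columns of $Q$ lie in $\ker\Omega$'', i.e.\ to ``$\Omega Q=0$'', i.e.\ to ``$\omega$ is an equilibrium stress of $(G,\q)$ as well'' (the sub-case where $\q$ lacks full affine span is disposed of separately). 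So everything reduces to: \emph{$\omega$ is a stress of $(G,\q)$ for every $\q$ equivalent to $\p$.}

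\textbf{The energy identity.} Using that $\Omega$ has zero row and column sums, a direct computation gives, for every configuration $\q$,
\[
  \sum_{i,j}\Omega_{ij}\,\langle \q_i,\q_j\rangle \;=\; \sum_{\{i,j\}\in E}\omega_{ij}\,\|\q_i-\q_j\|^{2},
\]
so the left-hand ``energy'' depends only on the edge lengths. For $\q=\p$ it vanishes (the equilibrium condition is $\sum_j\Omega_{ij}\p_j=0$), hence it vanishes for every $\q$ equivalent to $\p$. If $\Omega$ is positive semidefinite we are already done: writing the left side as $\sum_{a=1}^{d}(q^{(a)})^{t}\Omega\,q^{(a)}$ over the $d$ coordinate vectors $q^{(a)}\in\RR^{n}$, PSD-ness makes each summand nonnegative while they sum to zero, so each $\Omega q^{(a)}=0$ and $\Omega Q=0$. (This recovers the super-stable case, which by Theorem~\ref{thm:sstour} is even universally rigid.)

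\textbf{The crux: the indefinite case.} When $\Omega$ is indefinite, ``energy $=0$'' no longer yields ``$\Omega Q=0$'', and genericity of $\p$ must now be used essentially. The plan is to pass to $\CC$. The set $\mathcal{C}$ of configurations equivalent to $\p$ is the fibre, over a generic point, of the (rational, $\QQ$-defined) squared--edge-length map, so over $\CC$ it is reduced and equidimensional of dimension $\binom{d+1}{2}=\dim O_\p$ (using infinitesimal rigidity), and its irreducible components form a single orbit under the monodromy group of the associated finite cover. The property ``$(G,\q)$ admits a stress matrix of rank $n-d-1$'' is $\QQ$-defined, hence monodromy-invariant; it holds on the components making up the complex congruence orbit $O_\p$ (where $\Omega$, suitably transported, is such a stress), so by transitivity it holds on \emph{every} component: each $\q\in\mathcal C_\CC$ carries a maximal-rank stress $\Psi_\q$, whose kernel is exactly the column space of the homogeneous coordinate matrix of $\q$. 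The remaining --- and genuinely hard --- step is to identify that kernel with $\ker\Omega$, i.e.\ to show $\q$ lies in the affine orbit of $\p$: here one combines the energy identity applied with $\Psi_\q$ against the configuration $\p$ (which forces the coordinates of $\p$ to be $\Psi_\q$-isotropic) with a careful analysis of the $\QQ$-defined incidence variety of pairs (configuration, maximal-rank stress) and its monodromy, so as to rule out any $\q$ whose affine class differs from that of $\p$. This identification --- tracking the possibly positive-dimensional family of maximal-rank stresses along $\mathcal C_\CC$, ensuring the monodromy-invariant statement is strong enough, and dispatching the degenerate-affine-span and $n\le d+1$ sub-cases --- is the main obstacle and the one point at which the genericity of $\p$ is indispensable. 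Once it is in place, $\mathcal C_\CC$ lies in the affine orbit of $\p$, hence $\mathcal C_\CC=O_\p$ and $\mathcal C=O_\p(\RR)$, so $(G,\p)$, and therefore $G$ generically, is globally rigid in $\RR^d$.
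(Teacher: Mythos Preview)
The paper does not prove Theorem~\ref{thm:suff}; it is quoted as Connelly's result from~\cite{Connelly-global}.  Comparing your attempt to Connelly's actual argument: your reductions (affine image $\Rightarrow$ congruent via the conic condition; $\Omega Q=0 \Rightarrow$ $\q$ is an affine image of $\p$) are correct and match his, and your treatment of the PSD case is the standard super-stability argument.  The gap is in the indefinite case, which is the whole point of the theorem.

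Your monodromy argument establishes only that each $\q$ in the complex fibre carries \emph{some} stress matrix $\Psi_\q$ of rank $n-d-1$; what you need is that $\omega$ \emph{itself} is a stress for $\q$ (equivalently $\ker\Psi_\q=\ker\Omega$).  You explicitly label this ``the main obstacle'' and then gesture at the energy identity and at ``a careful analysis of the $\QQ$-defined incidence variety \ldots\ and its monodromy'', but neither closes the gap.  Applying the energy identity with $\Psi_\q$ against $\p$ yields only that the coordinate vectors of $\p$ are $\Psi_\q$-isotropic, which for indefinite $\Psi_\q$ is exactly the difficulty you began with; and the promised ``monodromy-invariant statement \ldots\ strong enough'' is never stated.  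As written, the crux is missing, not merely sketched.

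Connelly's argument avoids this detour entirely: he shows directly that every equilibrium stress of $(G,\p)$ is an equilibrium stress of $(G,\q)$.  The stress space of a framework is the orthogonal complement in $\RR^m$ of the image of the differential $df$ of the squared-edge-length map $f$.  For any $\q$, $\operatorname{im}df_\q$ is contained in the tangent space $T_{f(\q)}M$ to the measurement variety $M=\overline{f(\RR^{dn})}$, with equality when $\q$ is regular.  Since $\p$ is generic, $f(\p)$ is a smooth point of $M$ and $\p$ is regular, so $\operatorname{im}df_\p=T_{f(\p)}M$; and $T_{f(\p)}M$ depends only on $f(\p)=f(\q)$.  Hence $\operatorname{im}df_\q\subseteq\operatorname{im}df_\p$, giving $(\text{stresses of }\p)\subseteq(\text{stresses of }\q)$.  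In particular $\omega$ is a stress for $\q$, and your own reduction finishes.  No complexification or monodromy is needed, and one never has to identify $\Psi_\q$.

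A smaller point: your claim that infinitesimal rigidity is ``forced by the existence of the maximal-rank stress'' is not argued, and invoking Hendrickson is circular since his theorem assumes generic global rigidity, which is what you are proving.  In Connelly's route infinitesimal rigidity is not used to transfer the stress; the conic-at-infinity condition is handled by a separate genericity argument on $\p$.
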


Gortler Healy and Thurston~\cite{Gortler-Thurston} proved the following:
\begin{theorem}
\label{thm:necc}
If some generic framework $(G,\p)$ in $\RR^d$
does not have equilibrium stress matrix
of rank $n-d-1$, then the graph $G$ is not
generically globally rigid in $\RR^d$.
\end{theorem}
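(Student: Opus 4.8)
The plan is to prove the contrapositive: if $G$ is generically globally rigid in $\RR^d$, then some generic framework $(G,\p)$ has an equilibrium stress matrix of rank exactly $n-d-1$. This is the converse of Theorem~\ref{thm:suff}. By Asimow--Roth~\cite{Asimow-Roth-I} a generically globally rigid graph is generically locally rigid (a nontrivial analytic flex of a generic framework immediately leaves its congruence orbit, producing equivalent non-congruent frameworks), so throughout I may assume $G$ is generically locally rigid; then for generic $\p$ the rigidity matrix $R(\p)$ has rank $dn-\binom{d+1}{2}$ and the stress space $S(\p)$ has dimension $m-dn+\binom{d+1}{2}$. Both ``$G$ is generically globally rigid'' and ``the maximal rank of a stress matrix over generic $\p$'' are genuine $\QQ$-generic invariants, so ``some generic framework'' in the statement may be read as ``every generic framework''.

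The first substantive step is to pass to $\CC$. Let $f\colon\CC^{dn}\to\CC^m$ be the edge-measurement map $\p\mapsto\big(\iprod{\p_i-\p_j}{\p_i-\p_j}\big)_{\{i,j\}\in E}$ and $\mathcal M=\overline{f(\CC^{dn})}$, of dimension $dn-\binom{d+1}{2}$. For generic complex $\p$ the fibre $f^{-1}(f(\p))$ is a finite disjoint union of orbits of the complex congruence group $O(d,\CC)\ltimes\CC^d$, each of dimension $\binom{d+1}{2}$; call $G$ \emph{$\CC$-generically globally rigid} if there is exactly one such orbit. I claim real generic global rigidity implies $\CC$-generic global rigidity: writing $\mathcal C=\{(\p,\q):f(\p)=f(\q)\}$ with its congruence piece $\mathcal C_0$, if some other irreducible component $\mathcal C_1$ dominated the first projection then, since $\mathcal C_1$ is defined over $\QQ$ and (this is the point) its real points are Zariski-dense in it, a generic real $\p$ would admit a non-congruent real equivalent, contradicting real generic global rigidity. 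This complexification lemma is established in~\cite{Gortler-Thurston}, which I would invoke.

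It remains to show that $\CC$-generic global rigidity forces a stress of rank $n-d-1$ at generic $\p$. Suppose not: for generic $\p$ every stress matrix of $(G,\p)$ has rank $k\le n-d-2$, hence kernel of dimension $n-k\ge d+2$. The aim is to exhibit, for generic $\p$, a $\q$ with $f(\q)=f(\p)$ not congruent to $\p$, i.e.\ a second congruence orbit in the generic fibre of $f$. Two tools are available. (i) The \emph{averaging identification}: with $\p^+=\tfrac12(\p+\q)$ and $\p^-=\tfrac12(\p-\q)$, the identity $\iprod{\p_i-\p_j}{\p_i-\p_j}-\iprod{\q_i-\q_j}{\q_i-\q_j}=4\iprod{\p^+_i-\p^+_j}{\p^-_i-\p^-_j}$ shows that $(\p,\q)\mapsto(\p^+,\p^-)$ is a linear isomorphism carrying $\mathcal C$ onto the total space $\mathcal K=\{(\p^+,\p^-):R(\p^+)\p^-=0\}$ of the kernel bundle of the rigidity map, and $\mathcal C_0$ onto the sub-bundle $\mathcal K_0$ of infinitesimal congruences; so a component of $\mathcal K$ other than $\mathcal K_0$ whose image under $(\p^+,\p^-)\mapsto\p^++\p^-$ is dense in $\CC^{dn}$ is exactly what is needed. (ii) The \emph{stress--curvature identification}: for generic $\p$ the normal space to $\mathcal M$ at $f(\p)$ is $S(\p)$, and the second fundamental form of $\mathcal M$, paired with a stress $\omega\in S(\p)$, is the stress-energy form $\p'\mapsto\sum_{\{i,j\}\in E}\omega_{ij}\iprod{\p'_i-\p'_j}{\p'_i-\p'_j}$. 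The plan is to combine these into a dimension count on the incidence variety $\{(\p,\Omega):\Omega\text{ a maximal-rank stress of }(G,\p)\}$, fed the surplus $(\ge d+2)$-dimensional kernels forced by $k\le n-d-2$, designed to produce a component of $\mathcal K$ (equivalently of $\mathcal C$) beyond $\mathcal K_0$ that still surjects onto configuration space, contradicting $\CC$-generic global rigidity.

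The crux is this last step. When a stress of deficient rank is positive semidefinite, its surplus kernel directions directly flex the framework through equivalent configurations --- which is exactly why super stability must insist on rank $n-d-1$ --- but an \emph{indefinite} stress of rank $<n-d-1$ carries no such immediate consequence, so the non-congruent equivalent framework has to be extracted from the global geometry of $\mathcal M$ (its second fundamental form, equivalently the ramification of $f$) rather than built pointwise; this is the technical heart of~\cite{Gortler-Thurston}. A secondary, more routine obstacle is establishing Zariski-density of the real points of $\mathcal C_1$ in the complexification step.
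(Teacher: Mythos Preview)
This theorem is not proved in the present paper; it is quoted as a background result of \cite{Gortler-Thurston}, with no argument given. So there is no in-paper proof to compare your proposal against --- you are in effect sketching a reconstruction of the Gortler--Healy--Thurston proof and, as you yourself say, deferring its technical heart back to that reference.

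On the sketch itself: the scaffolding is right (complexify, read stresses as conormals to the measurement variety $\mathcal M$, exploit the rank deficiency), but the two ``tools'' you list play very different roles in the actual argument, and your proposed mechanism leans on the wrong one. The averaging identification (i) is a true identity, but note that the surplus kernel of a stress matrix $\Omega$ consists of \emph{configurations} $\q$ with $\Omega\q=0$ --- other frameworks that happen to share $\Omega$ as a stress --- not infinitesimal flexes of $\p$. So the large $\ker\Omega$ does not feed directly into your bundle $\mathcal K=\{(\p^+,\p^-):R(\p^+)\p^-=0\}$, and the advertised ``dimension count on $\{(\p,\Omega)\}$'' is not specified enough to see how it would manufacture a second component of $\mathcal K$ whose image under $(\p^+,\p^-)\mapsto\p^++\p^-$ is dense. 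Indeed, for a generically rigid $G$ the fibre of $\mathcal K$ over a generic $\p^+$ is exactly the trivial flexes regardless of stress ranks, so any extra component would have to live over a proper subvariety of configurations and nonetheless dominate under the sum map; nothing in your count forces that.

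What \cite{Gortler-Thurston} actually use is closer to your ingredient (ii). They work with the \emph{shared} stress kernel $K(\p)=\bigcap_{\Omega}\ker\Omega$ (which, under the deficient-rank hypothesis, has dimension at least $d+2$), show that for generic $\p$ every $\q\in K(\p)$ has the same stress space and hence the same tangent space $T_{f(\q)}\mathcal M=T_{f(\p)}\mathcal M$, and then invoke a lemma to the effect that a map whose Gauss image is degenerate along a family of dimension exceeding the fibre dimension cannot have generically singleton fibres. The complexification issue you flag (Zariski density of real points in the extra component of $\mathcal C$) is genuine and is handled there as well. So your (ii) is on target; (i) is a detour, and the step you label ``the crux'' really is the entire content.
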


The following is the easy half of a theorem by
Hendrickson~\cite{Hendrickson-unique}, which we will need below.
\begin{theorem}
\label{thm:hen}
If $G$ is generically globally rigid in $\RR^d$, then it must
be $(d+1)$-connected.
\end{theorem}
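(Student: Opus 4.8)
The plan is to prove the contrapositive via the classical ``reflection'' construction. Assume $G$ is not $(d+1)$-connected and fix an arbitrary generic framework $(G,\p)$ in $\RR^d$; the goal is to produce a framework $(G,\q)$ in $\RR^d$ that is equivalent, but not congruent, to $(G,\p)$, so that $(G,\p)$ is not globally rigid and hence $G$ is not generically globally rigid. Setting aside the degenerate small cases (graphs with at most $d+1$ vertices, which are handled by the usual conventions on connectivity of small and complete graphs, or by the standing hypothesis $n\ge d+2$), the failure of $(d+1)$-connectivity supplies a vertex set $S$ with $|S|\le d$ such that $G-S$ is disconnected; write $V(G)=A\cup S\cup B$ with $A,B\neq\emptyset$ and no edge of $G$ joining $A$ to $B$.

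Since $|S|\le d$, the points $\{\p_i:i\in S\}$ lie on a hyperplane of $\RR^d$, and the bound $|S|\le d$ leaves enough room to take such a hyperplane $H$ also avoiding each $\p_a$ ($a\in A$) and each $\p_b$ ($b\in B$) --- when $|S|=d$ this hyperplane is forced and the avoidance follows from affine general position of $\p$. Let $\rho$ be the reflection of $\RR^d$ across $H$ and set $\q_i:=\p_i$ for $i\in S\cup B$ and $\q_i:=\rho(\p_i)$ for $i\in A$. Every edge of $G$ has both endpoints in $S\cup A$ or both in $S\cup B$; on the second set $\q$ agrees with $\p$, and on the first set $\q$ agrees with $\rho\circ\p$ because $\rho$ fixes $\p_i$ for $i\in S$. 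As $\rho$ is an isometry, $(G,\q)$ is therefore equivalent to $(G,\p)$.

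The substance of the argument is that $(G,\q)$ is not congruent to $(G,\p)$. Suppose $T$ were an isometry of $\RR^d$ with $T(\p_i)=\q_i$ for every vertex $i$. Then $T$ fixes $\p_i$ for all $i\in S\cup B$, while $T(\p_a)=\rho(\p_a)$ for $a\in A$. Choosing any $a\in A$ and any $b\in B$ and using that $T$ and $\rho$ are isometries,
\[
  \|\rho(\p_a)-\p_b\|=\|T(\p_a)-T(\p_b)\|=\|\p_a-\p_b\|=\|\rho(\p_a)-\rho(\p_b)\|,
\]
so $\rho(\p_a)$ is equidistant from $\p_b$ and $\rho(\p_b)$ and hence lies on the perpendicular bisector hyperplane of the segment joining them; since $\p_b\notin H$, that bisector is exactly $H$. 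Thus $\rho(\p_a)\in H$, whence $\p_a=\rho(\rho(\p_a))\in\rho(H)=H$, contradicting the choice of $H$.

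I do not anticipate a serious obstacle --- this really is the ``easy half'' --- but the step that needs genuine care is the non-congruence argument: rather than asserting that the reflected framework is ``visibly different'', one must observe that any hypothetical congruence $T$ is pinned down by the equidistance identity above. It is worth emphasizing where the hypothesis enters: the inequality $|S|\le d$, which is exactly what $(d+1)$-connectivity fails to provide, is precisely what lets us slip a reflecting hyperplane through $\p_S$ while keeping the reflected vertices off it. The only remaining bookkeeping is the treatment of graphs on at most $d+1$ vertices.
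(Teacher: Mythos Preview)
The paper does not supply its own proof of this statement; it is quoted as ``the easy half of a theorem by Hendrickson'' and simply attributed to \cite{Hendrickson-unique}. Your reflection construction is precisely Hendrickson's classical argument, and the details you give are correct, including the careful non-congruence step. One small streamlining is available: rather than positing a hypothetical isometry $T$ and deriving the equidistance identity, you can directly compare the single pairwise distance $\|\q_a-\q_b\|=\|\rho(\p_a)-\p_b\|$ with $\|\p_a-\p_b\|$; these agree iff $\p_b$ lies on the perpendicular bisector of $\p_a$ and $\rho(\p_a)$, which is $H$, contradicting $\p_b\notin H$. Your version reaches the same contradiction and is fine as written.
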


\section{PSD general position stresses (mostly review)}

\begin{definition}\label{def:psd gstress}
Fix a dimension $d$.
Let $G$ be a graph with $n \ge d+2$
vertices.
Let \defn{$\PGST$} be the real semi-algebraic set of $n$-by-$n$
\defn{positive semidefinite
general position $d$-dimensional stress matrices} for $G$.
Specifically, this is the set of real
PSD symmetric matrices that have $0$ entries
corresponding to non-edges of $G$,  have the all-ones vector in its
kernel, have  rank equal to $n-d-1$, and such
that every subset of $n-d-1$ columns is a linearly independent set.
\end{definition}

The main theorem of this section is the following
\begin{theorem}
\label{thm:pgstr}
Fixing a dimension $d$,
let $G$ be $(d+1)$-connected. Then the set of
positive semidefinite general position
(real)
stress matrices for $G$
is an irreducible real
semi-algebraic set of dimension
$m-\binom{d+1}{2}$.

If $G$ is not $(d+1)$-connected then
this set is empty.
\end{theorem}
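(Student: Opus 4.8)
The plan is to parameterize $\PGST$ by a space of orthogonal representations (GORs) of the complement graph, following the construction of Lovász--Saks--Schrijver \cite{Lovasz-Schrijver} and the PSD-stress construction of Alfakih--Connelly \cite{Alfakih-conn}. First I would recall that, for a $(d+1)$-connected $G$, a generic orthogonal representation of $\bar G$ in $\RR^{d+1}$ (assigning to each vertex $i$ a vector $\u_i \in \RR^{d+1}$ so that $\u_i \perp \u_j$ whenever $\{i,j\}$ is a non-edge) has the property that every $d+1$ of the $\u_i$ are linearly independent; this uses the connectivity hypothesis via a Hall/matroid-union argument on the non-edge constraints, and it is exactly where $(d+1)$-connectivity enters. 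Given such a GOR, the Gram-type matrix $\Omega := U U^t$, where $U$ has rows $\u_i^t$, is PSD, symmetric, has zero entries on non-edges, and has rank $d+1$; to get rank $n-d-1$ and the all-ones kernel vector one instead takes the ``dual'' construction — the PSD stress matrix built from the GOR as in \cite{Alfakih-conn} — which lands in $\PGST$ precisely when the GOR is in general position and the stress is nondegenerate. I would cite these facts from the earlier literature rather than reprove them.

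The core of the argument is then to identify $\PGST$ with the image of an irreducible semi-algebraic set under a semi-algebraic map, together with a dimension count. The domain is (a Zariski-open, hence irreducible, subset of) the real variety of orthogonal representations of $\bar G$ in $\RR^{d+1}$, modulo the orthogonal group $O(d+1)$ acting on the right; irreducibility of this domain follows because the GOR variety is cut out by bilinear equations that, generically, have locally constant rank, so the smooth locus is a manifold of the expected dimension and is connected after passing to the relevant component. The map sending a GOR to its associated PSD stress matrix is a rational (hence semi-algebraic) map, and I would show it is generically finite-to-one onto $\PGST$, so that $\PGST$ is irreducible as the image of an irreducible set and has dimension equal to $\dim(\text{GOR variety}) - \dim O(d+1)$. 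A bookkeeping computation — number of GOR coordinates $(d+1)n$, minus number of non-edge constraints $\binom{n}{2} - m$, minus $\binom{d+1}{2}$ for the group — must be shown to equal $m - \binom{d+1}{2}$; this forces the identity $(d+1)n - \left(\binom{n}{2}-m\right) = m$, i.e. $(d+1)n - \binom{n}{2} = 0$, which is \emph{not} an identity, so in fact the correct statement is that the generic GOR variety itself has the right dimension only after accounting for the stress construction's own fibers, and the honest dimension count comes from the parameterization being by the $m - \binom{d+1}{2}$ ``free'' stress coordinates directly. I would therefore instead run the dimension count on the target side: exhibit $m - \binom{d+1}{2}$ coordinates of $\Omega$ (the stress values on edges, minus the $\binom{d+1}{2}$ relations imposed by a chosen $K_{d+1}$-like rigid substructure, or by the rank condition) that are algebraically independent on $\PGST$, and show every other coordinate is determined, giving both the dimension and, combined with connectedness of the GOR domain, irreducibility.

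For the second assertion, suppose $\Omega \in \PGST$ is nonempty; then $\Omega$ has rank $n-d-1$ with every $n-d-1$ columns independent, so every $d+1$ columns of a matrix $U$ with $UU^t$ having the complementary kernel structure are independent. Translating the general-position condition back through the GOR picture, if $G$ had a vertex cut $S$ of size $\le d$ separating the rest into parts $A,B$, then the vectors indexed by $A$ and by $B$ would be forced into a common $\le d$-dimensional subspace (the orthogonal complement of enough of the $\u_i$), contradicting the independence of every $d+1$ columns. Hence $\PGST \neq \emptyset$ implies $(d+1)$-connectivity, which is the contrapositive of the claim.

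\textbf{Main obstacle.} The hard part will be establishing that the generic orthogonal representation of $\bar G$ in $\RR^{d+1}$ is automatically in \emph{general position} (every $d+1$ vectors independent) exactly when $G$ is $(d+1)$-connected — this is the crux that makes the GOR domain the right parameterizing object — and, secondarily, making the dimension count genuinely rigorous despite the fact that the naive constraint-counting on GORs does not visibly telescope to $m - \binom{d+1}{2}$; one must argue via the fibers of the stress map (which are generically $O(d+1)$-torsors intersected with the fixed-rank locus) rather than by subtracting constraints blindly.
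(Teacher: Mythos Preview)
Your overall strategy---parameterize $\PGST$ by general-position orthogonal representations and take Gram matrices, inheriting irreducibility and reading off the dimension---is exactly the paper's approach. But you have placed the GOR in the wrong ambient dimension, and this single error propagates through everything that follows. The GORs must live in $\RR^{n-d-1}$, not $\RR^{d+1}$. With $D = n-d-1$ the configuration matrix $X$ is $(n-d-1)\times n$, so $X^tX$ is directly an $n\times n$ matrix of rank $n-d-1$ with the correct zero pattern on non-edges; after centering (Alfakih's step) it is a PSD stress matrix in $\PGST$, with no mysterious ``dual construction'' needed. This is also why the connectivity hypothesis lines up: the Lov\'asz--Saks--Schrijver theorem says $G$ has a GOR in $\RR^D$ iff $G$ is $(n-D)$-connected, so $D = n-d-1$ gives exactly $(d+1)$-connectivity, whereas your choice $D = d+1$ would demand $(n-d-1)$-connectivity, a completely different hypothesis. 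The failure of your naive constraint count was the symptom; the wrong $D$ is the disease.

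With the corrected ambient dimension the count works cleanly: the GOR variety in $\RR^{n-d-1}$ has dimension $n(n-d) - \binom{n+1}{2} + m$ (Corollary~\ref{cor:gor-dim}); subtracting $n-d-1$ for the centering constraint and $\binom{n-d-1}{2}$ for the $O(n-d-1)$ fiber of the Gram map yields $m - \binom{d+1}{2}$. The paper packages all of this as $\PGST = \LL$ (Lemma~\ref{lem:pgeq}, via Takagi factorization for the reverse inclusion) together with the dimension and irreducibility of $\LL$ (Corollary~\ref{cor:dl}). Your fallback idea of counting ``free stress coordinates minus a $K_{d+1}$ substructure'' is unnecessary and would anyway require a $K_{d+1}$ subgraph that $G$ need not contain---indeed the paper treats that as a separate, easier special case. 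Your emptiness argument for the non-$(d+1)$-connected case is salvageable, but only once the vectors are moved to $\RR^{n-d-1}$: orthogonality between the $A$-block and the $B$-block then forces their spans to have total dimension at most $n-d-1$, while $|A|+|B|\ge n-d$, contradicting general position; in $\RR^{d+1}$ the analogous inequality does not close.
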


This result is mostly just a repackaging of
a result from~\cite{ghave}, which uses a construction of
Alfakih's~\cite{Alfakih-conn}, that is, in turn,  based on the
work of~\cite{Lovasz-Schrijver}. Let's work through this backwards.

Before going on,
we recall that Gale duality is the following  linear algebra statement.
\begin{lemma}\label{lem:linear-gale}
Let $A$ be an $m\times n$ matrix of rank $n-r$ with entries in any
field and $X$ an $n\times r$ matrix with columns that span the kernel
of $A$.  Then a subset of $r$ rows of $X$ is linearly independent if and
only if the complementary set of $n-r$ columns of $A$ is linearly independent.
\end{lemma}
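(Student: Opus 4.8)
\emph{Proof proposal.} The plan is to prove the equivalence working entirely with coordinate subspaces of $\mathbb{F}^{n}$ and the kernel of $A$, never passing to transposes or orthogonal complements. This is the one point that needs care: over a field of positive characteristic, or over $\CC$, a matrix $B$ of full row rank can have $BB^{t}$ singular, so the tempting shortcut ``$\operatorname{rowspace}(A)$ is a complement to $\ker A$, now count coordinates'' is not available in the stated generality. Handling that is really the only obstacle, and it is dealt with by phrasing everything through a single coordinate projection.

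Here are the steps I would carry out. Fix $S\subseteq\{1,\dots,n\}$ with $|S|=r$ and complement $T:=S^{c}$ of size $n-r$; write $V_{S}=\operatorname{span}\{e_{i}:i\in S\}$ and $V_{T}=\operatorname{span}\{e_{i}:i\in T\}$, so that $\mathbb{F}^{n}=V_{S}\oplus V_{T}$, and let $\pi_{S}\colon\mathbb{F}^{n}\to V_{S}$ be the coordinate projection with kernel $V_{T}$. Since $\operatorname{rank}A=n-r$, the kernel $K:=\ker A$ has dimension $r$; the columns of $X$, being $r$ vectors spanning the $r$-dimensional space $K$, form a basis of $K$, so the map $\phi\colon\mathbb{F}^{r}\to K$, $\phi(v)=Xv$, is an isomorphism. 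Next I would chain four equivalences. On the $A$ side: the $n-r$ columns of $A$ indexed by $T$ are independent $\iff$ $A|_{V_{T}}$ is injective $\iff$ $K\cap V_{T}=0$; and since $\dim V_{T}+\dim K=(n-r)+r=n$, a dimension count upgrades the last condition to $\mathbb{F}^{n}=V_{T}\oplus K$. On the $X$ side: $\ker\!\big(\pi_{S}|_{K}\big)=K\cap V_{T}$ and $\dim K=r=\dim V_{S}$, so $\pi_{S}|_{K}$ is an isomorphism $\iff$ $K\cap V_{T}=0$; composing with $\phi$, the linear map $\pi_{S}\circ\phi$ sends $v$ to the $S$-indexed entries of $Xv$, i.e.\ its matrix is the $r\times r$ submatrix $X_{S}$ of $X$ on the rows in $S$, so (as $\phi$ is an isomorphism) $\pi_{S}|_{K}$ is an isomorphism $\iff$ $X_{S}$ is invertible $\iff$ the $r$ rows of $X$ indexed by $S$ are independent.

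Reading this chain from both ends, the $r$ rows of $X$ indexed by $S$ are linearly independent if and only if the $n-r$ columns of $A$ indexed by the complement $T$ are linearly independent. Since $S$ was an arbitrary size-$r$ subset, this is exactly the statement of the lemma. I expect no serious difficulty beyond the bookkeeping above; the only substantive choice is to route the argument through the projection $\pi_{S}$ and the decomposition $\mathbb{F}^{n}=V_{S}\oplus V_{T}$, which is what keeps it valid over an arbitrary field.
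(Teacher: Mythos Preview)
The paper does not actually prove this lemma; it only cites an external reference (``See \cite{sbs} for a nice proof''). So there is nothing in the paper to compare your argument against directly.

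That said, your proof is correct. The key move---reducing both conditions to the single statement $K\cap V_{T}=0$ via the coordinate projection $\pi_{S}$---is clean and works over any field, as required by the lemma's hypothesis. Your side remark about the ``tempting shortcut'' is also on point: over $\CC$ with the bilinear (non-Hermitian) form, $\ker A$ and the row space of $A$ need not be complementary (e.g.\ $A=(1,\,i)$ has $\ker A=\operatorname{rowspace}(A)$), so routing through orthogonal complements would genuinely fail in the generality stated. Your approach avoids this entirely. The only cosmetic point is that the dimension upgrade to $\mathbb{F}^{n}=V_{T}\oplus K$ is never used downstream---both chains terminate at $K\cap V_{T}=0$---so you could drop that sentence.
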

See \cite{sbs} for a nice proof.

Let us explicitly restate a
general position result used in~\cite{Alfakih-conn} to relate
general position in stresses to general position kernel frameworks.
\begin{lemma}\label{lem:pdual}
Let $G$ be $(d+1)$-connected.  If $\Omega\in \PGST$ for  $G$,
and $\p$ affinely spans $\RR^d$ and is in the kernel of $\Omega$,
then $\p$ is in affine general position.  If $\p$ is in affine general position,
then any rank $n - d - 1$
PSD stress matrix $\Omega$ with $\p$ in its kernel is
in $\PGST$.
\end{lemma}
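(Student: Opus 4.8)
The plan is to derive both implications from Gale duality (Lemma~\ref{lem:linear-gale}), applied to $\Omega$ together with the homogeneous coordinate matrix $P$ whose rows are the vectors $\hat\p_i^t$. The one preliminary point I need to nail down is that, in both directions of the lemma, the column span of $P$ is \emph{all} of $\ker\Omega$. Since $\Omega$ has rank $n-d-1$, the kernel $\ker\Omega$ is $(d+1)$-dimensional; and whenever $\p$ affinely spans $\RR^d$ --- which is a hypothesis in the first direction, and in the second direction follows from affine general position of $n\ge d+2>d$ points --- the $n\times(d+1)$ matrix $P$ has rank $d+1$. Combined with $\Omega P=0$, this forces the column span of $P$ to equal $\ker\Omega$, so $P$ is a legitimate choice of ``$X$'' in Lemma~\ref{lem:linear-gale}.

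Now I would apply Lemma~\ref{lem:linear-gale} with $A=\Omega$ (so that $r=d+1$) and $X=P$: a $(d+1)$-subset of the rows of $P$ --- equivalently, a $(d+1)$-subset of the vectors $\hat\p_i$ --- is linearly independent if and only if the complementary $(n-d-1)$-subset of the columns of $\Omega$ is linearly independent. Letting the $(d+1)$-subset range over all choices yields the clean equivalence: every $(d+1)$-subset of $\{\hat\p_i\}$ is linearly independent if and only if every $(n-d-1)$-subset of the columns of $\Omega$ is linearly independent. Because $n\ge d+2$, every subset of $\{\hat\p_i\}$ of size at most $d+1$ extends to one of size exactly $d+1$, so the left-hand condition is precisely the statement that $\hat\p$ is in linear general position, which Section~\ref{sec:background} records as equivalent to $\p$ being in affine general position. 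The right-hand condition is precisely the extra requirement --- beyond being PSD, having rank $n-d-1$, having $G$'s zero pattern, and having the all-ones vector in its kernel --- that Definition~\ref{def:psd gstress} imposes for membership in $\PGST$.

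With this equivalence in hand, both halves of the lemma fall out. For the first, the hypothesis $\Omega\in\PGST$ supplies the right-hand condition, hence affine general position of $\p$. For the second, the hypotheses already provide a PSD stress matrix $\Omega$ of rank $n-d-1$ with $\hat\p$ in its kernel, and affine general position of $\p$ supplies, via the equivalence, the missing column-independence condition; so $\Omega\in\PGST$.

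I expect the only place requiring real care to be the preliminary paragraph: checking that the column span of $P$ is \emph{all} of $\ker\Omega$ rather than merely contained in it, and that the complementation of index subsets in Lemma~\ref{lem:linear-gale} lines up exactly as claimed. Everything else is bookkeeping. In particular, positive semidefiniteness plays no role in the general-position equivalence and is simply inherited, and the $(d+1)$-connectivity hypothesis --- standing throughout this section --- is not actually needed for this lemma.
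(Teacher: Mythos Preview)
Your proposal is correct and follows essentially the same approach as the paper: both apply Gale duality (Lemma~\ref{lem:linear-gale}) with $A=\Omega$ and $X=P$ the homogeneous-coordinate matrix to obtain the equivalence between linear general position of the rows of $P$ and the column-independence condition defining $\PGST$. If anything, your write-up is slightly more careful than the paper's in making explicit why the columns of $P$ span all of $\ker\Omega$ and why affine general position of $n\ge d+2$ points forces affine spanning in the second direction; your closing observation that neither positive semidefiniteness nor $(d+1)$-connectivity is actually used here is also accurate (and indeed the paper reuses this proof verbatim for the indefinite case in Lemma~\ref{lem:dual}).
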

\begin{proof}
This follows from Gale  duality (see Lemma
\ref{lem:linear-gale}).  We give the details
for completeness.

Let $\p$ be an affinely spanning configuration
of $n$ points in $\RR^d$ and $\Omega$ a
stress matrix of rank $n - d - 1$ with $\p$ in its kernel.
By general position, the vector configuration $\hat \p$ is
in linearly general position.  If the matrix $P$
has the vectors $\hat \p^t_i$ as its rows then
$\Omega P = 0$, and so the columns of $X$ span
the kernel of $\Omega$.  Gale duality then implies
that any $n - d - 1$ columns $\Omega$ are linearly
independent.  Hence $\Omega$ is a general position stress.

Going in the other direction, suppose that $\Omega$
is a general position stress matrix.  Let $P$ be any
matrix with columns spanning the kernel of $\Omega$
where the last column is all ones.  This is possible
because $\Omega$ is a stress matrix.  Gale duality
now implies that the rows of $P$ are in linearly
general position.  Interpreted as homogeneous coordinates,
we see that $P$ corresponds to a configuration $\p$ in
affine general position.  Since every affinely spanning
kernel framework arises this way they are all in general position.
\end{proof}

\subsection{GORs and connectivity}

In~\cite{Lovasz-Schrijver}, Lov\'asz
Saks and Schriver define a concept called a
(GOR) general position
orthogonal representation of a graph $G$ in $\RR^D$.

\begin{definition}
\label{def:GOR}
Let $G$ be a graph and
let $D \ge 1$ be a fixed dimension.
An (OR)  \defn{orthogonal representation} of $G$ in $\RR^D$
is a vector configuration $\v$ indexed by the vertices of $G$ in
$\RR^{D}$ with the following  property:
$\v_i$ is orthogonal to the vectors
associated with each non-neighbor of vertex $i$.
The set of ORs is an algebraic set.

A (GOR) \defn{general position orthogonal representation} of $G$
in $\RR^D$
is an OR in $\RR^D$ with the added property that
the $\v_i$ are in general linear position.
The set of GORs is a quasi-projective variety.

\end{definition}

The relevant results from \cite{Lovasz-Schrijver} are the following.
\begin{theorem}\label{thm:lss}
Let $n>D$.
Let $G$, a graph on $n$ vertices,
be  $(n-D)$-connected for some $D$.  Then $G$ must
have a GOR in $\R^D$ \cite[Theorem 1.1]{Lovasz-Schrijver}.
Moreover, the set of all such GORs of $G$ is an irreducible
quasi-projective variety~\cite[Theorem 2.1]{Lovasz-Schrijver}.
If $G$ is not $(n-D)$-connected, then it cannot
have a GOR in $\R^D$.
\end{theorem}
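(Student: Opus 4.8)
The statement bundles three claims, and the plan is to dispatch them separately. That an $(n-D)$-connected $G$ admits a GOR in $\RR^D$ is \cite[Theorem 1.1]{Lovasz-Schrijver}, and that the set of all GORs of $G$ is an irreducible quasi-projective variety is \cite[Theorem 2.1]{Lovasz-Schrijver}; for both of these I would simply invoke Lov\'asz--Saks--Schrijver. (Their argument builds an orthogonal representation one vertex at a time, using the connectivity hypothesis to control the dimension of the space of admissible choices for each successive vector and to see that imposing general position removes only a lower-dimensional subset; irreducibility and non-emptiness of the GOR variety then follow by iterating.)

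The piece I would prove directly is the converse: if $G$ is \emph{not} $(n-D)$-connected, then it has no GOR in $\RR^D$. Since $n > D$, failure of $(n-D)$-connectivity produces a vertex cut $S$ with $|S| \le n-D-1$; removing $S$ disconnects $G$, so we may split $V(G)\setminus S$ into nonempty sets $A$ and $B$ with no edge of $G$ joining them, and then $|A| + |B| = n - |S| \ge D+1$. Suppose, for contradiction, that $\v$ is a GOR of $G$ in $\RR^D$. Every vertex of $A$ is a non-neighbor of every vertex of $B$, so the subspaces $\operatorname{span}\{\v_i : i \in A\}$ and $\operatorname{span}\{\v_j : j \in B\}$ are orthogonal; let $p$ and $q$ be their dimensions, so $p + q \le D$. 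No $\v_i$ is the zero vector (with $n > D$, any single vector belongs to some $D$-element subset, which must be independent), hence $p, q \ge 1$ and so $p, q \le D-1$. I claim $|A| \le p$: otherwise the vectors indexed by $A$ number more than $p$ in a $p$-dimensional span, so they contain a minimal linearly dependent subset supported on some $C \subseteq A$ with $|C| \le p+1 \le D$; padding $C$ out to a set of exactly $D$ vertices with arbitrary further elements of $A \cup B$ (possible since $|A \cup B| \ge D+1$) yields $D$ vectors that are linearly dependent, contradicting general position. Symmetrically $|B| \le q$, whence $D+1 \le |A| + |B| \le p + q \le D$, a contradiction. So no GOR exists.

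The only genuinely hard input here is \cite[Theorems 1.1 and 2.1]{Lovasz-Schrijver}, which is imported verbatim; the converse is an elementary dimension count, and the one point that needs care is the bookkeeping in the step that pads a dependent subset up to size exactly $D$.
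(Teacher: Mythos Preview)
Your proposal is correct. The paper itself does not supply a proof of this theorem at all: it simply imports the forward direction and irreducibility from \cite[Theorems 1.1 and 2.1]{Lovasz-Schrijver}, exactly as you do, and states the converse without argument. So on the first two claims you and the paper agree verbatim.

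For the converse, your direct argument is essentially the standard Lov\'asz--Saks--Schrijver one (their Theorem~1.1$'$): use a small separator to split the non-cut vertices into $A$ and $B$, observe that the two spans are orthogonal subspaces of $\RR^D$ so their dimensions sum to at most $D$, and then derive $|A|+|B|\le D$ from general position, contradicting $|A|+|B|\ge D+1$. The padding step you flag is handled correctly, since $|A\cup B|\ge D+1$ guarantees enough vertices to extend any dependent $C\subseteq A$ of size at most $p+1\le D$ to a $D$-set. The paper does reproduce a version of this argument later, in the proof of Theorem~\ref{thm:cgor-dim} for the complex setting, but organizes it by cases (full span / not full span, dependent / independent) rather than through the clean $p+q\le D$ inequality; this is because the complex ``algebraic dot product'' is a bilinear form rather than an inner product, so orthogonality of individual vectors does not immediately give a dimension bound on the spans. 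In the real case your formulation is the more economical one.
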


In our terminology, we will set $D:=n-d-1$ where $d$ is fixed, and will consider graphs that are $(d+1)$-connected.  These graphs
have at least $d+2$ vertices, so $D \ge 1$.
With this notation, the theorem tells us that
 we need
$(d+1)$-connectivity to obtain GORs in $\RR^{n-d-1}$.

\begin{definition}
Let $G$ be a $(d+1)$-connected graph with $n$ vertices,
for some $d$.
Denote by $D_G$ the dimension of the set its GORs in $\RR^{n-d-1}$.
\end{definition}

The following is from~\cite{ghave}.
\begin{corollary}\label{cor:gor-dim}
Let $G$ be a $(d+1)$-connected graph with $n$ vertices and
$m$ edges.  Then the dimension $D_G$  is $n(n-d)-\binom{n+1}{2}+m$.
\end{corollary}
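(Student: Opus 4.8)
The plan is to compute $D_G$ by realizing the set of GORs in $\RR^{D}$ with $D = n-d-1$ as the generic fiber of a surjective map and counting dimensions. Recall that a GOR is a vector configuration $\v = (\v_1,\dots,\v_n)$ in $\RR^{D}$ satisfying $\iprod{\v_i}{\v_j} = 0$ for every non-edge $\{i,j\}$ of $G$, together with the general-position (open) condition. There are $n$ vectors of $D$ coordinates each, so the ambient configuration space has dimension $nD = n(n-d-1)$. Each non-edge imposes one quadratic equation; the number of non-edges is $\binom{n}{2} - m$. If these equations were independent on the GOR locus, we would get
\[
  D_G = n(n-d-1) - \left(\binom{n}{2} - m\right).
\]
A direct check shows $n(n-d-1) - \binom{n}{2} + m = n(n-d) - n - \binom{n}{2} + m = n(n-d) - \binom{n+1}{2} + m$, since $\binom{n+1}{2} = \binom{n}{2} + n$. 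So the whole content of the corollary is that, on the (nonempty, by Theorem~\ref{thm:lss}) GOR locus, the $\binom{n}{2}-m$ orthogonality equations cut out a variety of the expected codimension.

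The cleanest way to see this is to exhibit the GOR variety as (an open subset of) the total space of a nice map and invoke irreducibility from Theorem~\ref{thm:lss}. First I would fix a generic GOR $\v$, which exists by $(d+1)$-connectivity. Since the set of GORs is irreducible, its dimension equals the dimension of the tangent space at $\v$, i.e.\ the dimension of the solution space of the linearized system $\iprod{\v_i}{\v_j'} + \iprod{\v_i'}{\v_j} = 0$ over all non-edges $\{i,j\}$, where $\v'$ is the variable infinitesimal perturbation. So it suffices to show this linear system has rank exactly $\binom{n}{2}-m$, i.e.\ that the $\binom{n}{2}-m$ linear functionals $\v' \mapsto \iprod{\v_i}{\v_j'} + \iprod{\v_i'}{\v_j}$ are linearly independent. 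Equivalently, no nontrivial linear combination with coefficients $(\mu_{ij})$ over the non-edges vanishes identically in $\v'$; collecting the coefficient of each $\v_k'$, this says precisely that the matrix $M$ with $M_{ij} = \mu_{ij}$ on non-edges, $M_{ij}=0$ on edges and (say) arbitrary diagonal, satisfies $M\v = 0$ only when $M = 0$ off the diagonal. Because $\v$ is a general-position configuration of $n$ vectors in $\RR^{D}$ with $D = n-d-1$, any $D$ of the $\v_k$ are linearly independent, and a Gale-duality/rank argument (in the spirit of Lemma~\ref{lem:linear-gale} and Lemma~\ref{lem:pdual}) forces the support of such an $M$ to be too small to be nonzero — this is where $(d+1)$-connectivity of $G$ (equivalently, that the number of non-edges at each vertex is small enough relative to $D$) is used.

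The main obstacle is exactly that last independence/rank claim: translating ``$M\v=0$ with $M$ supported on non-edges implies $M=0$'' into the connectivity hypothesis. I expect to handle it by the same mechanism that underlies Theorem~\ref{thm:lss}: a vertex-by-vertex argument showing that if $M \neq 0$ then some vertex $i$ has $\v_i$ lying in the span of fewer than $D$ of the other $\v_j$ in a way that contradicts general position, or alternatively by directly quoting that the GOR variety is smooth of the stated dimension at a generic point — but since we want a self-contained dimension count, I would instead derive the codimension statement from Corollary's sibling facts already available: the GOR locus is irreducible and nonempty (Theorem~\ref{thm:lss}), it is contained in the variety cut out by the $\binom{n}{2}-m$ quadrics inside $(\RR^D)^n$, and a generic such quadric section has codimension equal to the number of quadrics provided the quadrics are ``in general enough position,'' which for orthogonality conditions reduces to the general-position property built into the definition of GOR. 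Assembling these gives $D_G = n(n-d-1) - \binom{n}{2} + m = n(n-d) - \binom{n+1}{2} + m$, as claimed. \eop
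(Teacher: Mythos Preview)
The paper does not give its own proof of this corollary; it simply cites \cite{ghave}. So there is no in-paper argument to compare against, and I will just evaluate your proposal on its merits.

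Your approach is sound and essentially complete. The arithmetic reduction to showing that the $\binom{n}{2}-m$ orthogonality constraints are independent at a GOR point is correct, as is the reformulation: a linear dependence among the differentials is encoded by a symmetric matrix $M$ supported on the non-edges of $G$ (zero diagonal, not ``arbitrary''---otherwise $\sum_j M_{kj}\v_j$ picks up an unwanted $M_{kk}\v_k$ term) with $\sum_j M_{kj}\v_j = 0$ for every $k$. The only place that needs tightening is your justification that $M=0$, and the argument is in fact simpler than you suggest---no Gale duality is needed. Row $k$ of $M$ is a linear relation among $\{\v_j : j\text{ a non-neighbor of }k\}$. Since $G$ is $(d+1)$-connected, every vertex has degree at least $d+1$, so $k$ has at most $n-d-2$ non-neighbors; as $n-d-2 < D = n-d-1$, general position of $\v$ makes these vectors linearly independent, forcing row $k$ to vanish. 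Hence $M=0$, the Jacobian has full rank at \emph{every} GOR point (not just a generic one), and the GOR locus is smooth there of the expected dimension $n(n-d-1) - \binom{n}{2} + m = n(n-d) - \binom{n+1}{2} + m$. Your parenthetical ``the number of non-edges at each vertex is small enough relative to $D$'' is exactly this observation; the invocation of Gale duality and the alternative routes in your last paragraph are unnecessary detours.
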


\subsection{LSS stresses}
Because of the orthogonality property of a GOR, its
Gram matrix has the right zero/non-zero pattern
to be a stress matrix.  Alfakih \cite{Alfakih-conn}
builds on this.  First we set some notation.
\begin{definition}\label{defn:gorc}
Let $G$ be a $(d+1)$-connected
graph and $\v$ a GOR of $G$ in dimension
$n - d -1$.  Since $G$ is $(d+1)$-connected,
$n\ge d + 2$, so $n - d - 1 > 0$.

The $(n - d - 1)\times n$ matrix $X$
with the $\v_i$ as its columns is the \defn{configuration
matrix} of $\v$.  We denote the Gram matrix $X^tX$ of
$\v$ by $\Psi$.  Note that $\Psi$ is, by construction, PSD
and  has
rank $n - d - 1$, (as $\v$ is in general position).

A GOR $\v$ is called \defn{centered} if its
barycenter is the origin.  We define
$\GOR^0$ to be the set of
centered GORs.

The Gram matrix $\Psi$
is a stress matrix (which we will call $\Omega$)
if and only if $\v$ is centered.  (Recall that the
extra condition is that the all-ones vector is in the
kernel.) Such an $\Omega$ is  PSD and of
rank $n-d-1$.

We define the set $\LL$ of \defn{Lovász-Saks-Schrijver stresses}
to be the collection of stress matrices $\Omega$ arising
as the Gram matrices of centered GORs.  Denote its
dimension by $D_L$.
\end{definition}

A \defn{full rank centering map} of a GOR $\v$ is a set of non-zero scalars
$\alpha_i$ so that the configuration
\[
    \{\alpha_1 \v_1, \ldots, \alpha_n \v_n\}
\]
has its barycenter at the origin.
Alfakih's main result in \cite{Alfakih-conn} is the following.
\begin{theorem}[\cite{Alfakih-conn}]
\label{thm:alf}
Let $G$ be $(d+1)$-connected.  Then any GOR
in $\RR^{n-d-1}$ for $G$ has a full rank centering map.
This gives rise to stress matrix $\Omega$ in LSS.  Moreover,
any framework $(G,\p)$ with $d$-dimensional affine span, that has
$\Omega$ as an equilibrium stress matrix, must
be in affine general position and be super stable.
\end{theorem}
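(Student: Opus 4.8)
The plan is to establish the three assertions in turn: the existence of a full-rank centering map for an arbitrary GOR, that the associated Gram matrix is a stress matrix (in fact an element of $\PGST$), and that every $d$-dimensional kernel framework of that stress is in affine general position and super stable. The first two are short linear algebra, and the third reduces, after invoking Lemma~\ref{lem:pdual}, to the one substantive point: that the edge directions do not lie on a conic at infinity. For the centering map, let $\v$ be a GOR of $G$ in $\RR^{n-d-1}$ with configuration matrix $X$ (size $(n-d-1)\times n$). Since $(d+1)$-connectivity forces $n\ge d+2$, the $\v_i$ span $\RR^{n-d-1}$, so $X$ has full row rank and $\ker X$ has dimension $d+1\ge 1$; a full-rank centering map is precisely a vector of $\ker X$ with no zero coordinate. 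Choose a basis matrix $K$ ($n\times(d+1)$) of $\ker X$. By Gale duality (Lemma~\ref{lem:linear-gale}) applied to $X$ and $K$, the general position of the $\v_i$ implies that every $d+1$ rows of $K$ are independent, so in particular no row of $K$ is zero; hence for each vertex $i$ the set $\{y\in\ker X:y_i=0\}$ is a proper linear subspace of $\ker X$. As $\ker X$ is a vector space over an infinite field, it is not a finite union of proper subspaces, and any $\alpha$ outside this union is the desired centering map.

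With $D_\alpha=\operatorname{diag}(\alpha_1,\dots,\alpha_n)$, set $\Omega:=D_\alpha X^t X D_\alpha$, the Gram matrix of the centered GOR $\alpha_1\v_1,\dots,\alpha_n\v_n$. Then $\Omega$ is PSD by construction; $\operatorname{rank}\Omega=\operatorname{rank} X=n-d-1$ since $D_\alpha$ is invertible; $\Omega_{ij}=\alpha_i\alpha_j\langle\v_i,\v_j\rangle=0$ on non-edges by the orthogonality axiom of an OR; and $\Omega\mathbf 1=D_\alpha X^t(X\alpha)=0$ by the centering condition. So $\Omega$ is a stress matrix and lies in $\LL$. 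It lies in $\PGST$ as well: any $n-d-1$ of its columns are the vectors $X^t(\alpha_{j_1}\v_{j_1}),\dots,X^t(\alpha_{j_{n-d-1}}\v_{j_{n-d-1}})$, and since $X^t$ is injective (full column rank) and the $\v_{j_k}$ are independent by general position, these columns are independent.

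Now let $(G,\p)$ have $d$-dimensional affine span and $\Omega$ as an equilibrium stress matrix, so $\Omega P=0$ for the matrix $P$ with rows $\hat\p_i^t$; since $\operatorname{rank}\Omega=n-d-1$ and $\operatorname{rank}P=d+1$, the columns of $P$ span $\ker\Omega$. Because $\Omega\in\PGST$, Lemma~\ref{lem:pdual} gives that $\p$ is in affine general position. The PSD and maximal-rank conditions for super stability are already in hand, so it remains only to show that the edge directions $\e_{ij}$ are not on a conic at infinity, equivalently that the symmetric matrices $\e_{ij}\e_{ij}^t$ span $\operatorname{Sym}(\RR^d)$.

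This last step is the main obstacle, and it genuinely requires the GOR structure: affine general position of $\p$ together with $(d+1)$-connectivity does not suffice, since polarizing the conic condition along the edges incident to a fixed vertex cannot reach the non-edges among that vertex's neighbors. I would argue by contradiction. Suppose $Q\neq 0$ is symmetric with $\e_{ij}^tQ\e_{ij}=0$ on every edge; set $\hat Q=\operatorname{diag}(Q,0)$, $B=P\hat Q P^t$, $c=\operatorname{diag}(B)$, and $\tilde B=B-\tfrac12(c\mathbf 1^t+\mathbf 1 c^t)$. The conic condition is exactly the statement that $\tilde B$ is symmetric with zero diagonal and vanishes on every edge, i.e.\ $\tilde B$ is supported on the complement graph; meanwhile, from $\Omega P=0$ (hence $XD_\alpha P=0$, as $X^t$ is injective) together with $XD_\alpha\mathbf 1=X\alpha=0$, one computes $XD_\alpha\tilde B D_\alpha X^t=0$, i.e.\ $\sum_{i,j}\alpha_i\alpha_j\tilde B_{ij}\,\v_i\v_j^t=0$. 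The crux is to conclude $\tilde B=0$ from this vanishing sum, the orthogonality $\v_i\perp\v_j$ on non-edges, and the linear general position of the $\v_i$; once $\tilde B=0$, polarizing within triangles of the affinely spanning configuration $\p$ forces $Q=0$, contradicting $Q\neq 0$. An equivalent route is to note that a conic at infinity with matrix $Q$ produces, after rescaling $Q$, a length-preserving affine deformation $(I+Q)^{1/2}\p$ that is again a $d$-dimensional kernel framework of $\Omega$, and to extract a contradiction with the general position of the GOR from the existence of this entire family of kernel frameworks. I expect this to be the part that requires real care; everything else is bookkeeping.
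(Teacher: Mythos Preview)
The paper does not prove this theorem; it is quoted from \cite{Alfakih-conn} and used as a black box. Your arguments for the full-rank centering map, for $\Omega\in\PGST$, and (via Lemma~\ref{lem:pdual}) for the affine general position of any spanning kernel framework are correct and match what the paper extracts elsewhere (cf.\ the proof of Lemma~\ref{lem: lgor center} and Lemma~\ref{lem:pgeq}).

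The conic-at-infinity step is a genuine gap, and neither of your sketches closes it. In the first, the identity $XD_\alpha\tilde B D_\alpha X^t=0$ holds for \emph{every} symmetric $Q$, not only conic ones: since $XD_\alpha P=0$ and $X\alpha=0$, each of the three pieces of $XD_\alpha\bigl(P\hat Q P^t-\tfrac12 c\mathbf 1^t-\tfrac12\mathbf 1c^t\bigr)D_\alpha X^t$ already vanishes. So the ``vanishing sum'' carries no information beyond the conic hypothesis you started from, and the residual lemma you isolate---injectivity of $(c_{ij})_{\{i,j\}\notin E}\mapsto\sum c_{ij}(\v_i\v_j^t+\v_j\v_i^t)$ from general position---is false on dimension grounds whenever $\binom{n}{2}-m>\binom{n-d}{2}$. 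In the second sketch, the affine image $(I+tQ)^{1/2}\p$ is indeed an equivalent kernel framework of $\Omega$, but every affinely spanning kernel framework of $\Omega$ is an affine image of $\p$ anyway, so no contradiction appears without first knowing universal rigidity, which is precisely what super stability is meant to deliver. A route that does work uses the stress directly rather than the GOR: summing the conic relations against $\omega$ at each vertex gives $\Omega f=0$ for $f_i:=\p_i^tQ\p_i$, hence $f_i=a^t\p_i+b$; subtracting yields $(Q\p_i-\tfrac12 a)\perp(\p_j-\p_i)$ for every edge at $i$, and since $(d+1)$-connectivity plus general position makes those edge directions span $\RR^d$, one obtains $Q\p_i\equiv\tfrac12 a$ and thus $Q=0$.
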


The following is from~\cite{ghave}.
\begin{corollary}
\label{cor:dl}
If $G$ is  $(d+1)$-connected then $D_L=m-\binom{d+1}{2}$
and LSS is an irreducible semi-algebraic set.
If $G$ is not $(d+1)$-connected, then LSS is empty.
\end{corollary}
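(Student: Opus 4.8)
The plan is to exhibit $\LL$ as the image of an auxiliary irreducible variety assembled from GORs, and to read off irreducibility, semi-algebraicity, and dimension from that presentation; the value $D_G = n(n-d)-\binom{n+1}{2}+m$ of Corollary~\ref{cor:gor-dim} and the full-rank centering guaranteed by Theorem~\ref{thm:alf} will do the real work. First I would dispose of the empty case: if $G$ is not $(d+1)$-connected, then taking $D := n-d-1$ in Theorem~\ref{thm:lss} shows $G$ has no GOR in $\RR^D$, hence no centered GOR, hence $\LL=\emptyset$. So assume henceforth that $G$ is $(d+1)$-connected, whence $D = n-d-1 \ge 1$.

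Next I would set up the incidence variety $\mathcal{I}$ of pairs $(\v,\alpha)$ with $\v$ a GOR of $G$ in $\RR^D$, $\alpha\in(\RR\setminus\{0\})^n$, and $\sum_i\alpha_i\v_i=0$ --- i.e., $\alpha$ a full-rank centering map for $\v$. Projection to $\v$ lands in the GOR variety, which is irreducible by Theorem~\ref{thm:lss}, and the fiber over $\v$ is the set of nowhere-zero vectors of $\ker X$, where $X$ is the configuration matrix of $\v$. Since $\v$ is in general linear position, $X$ has full row rank $D$, so $\ker X$ has dimension exactly $d+1$; and by Theorem~\ref{thm:alf} this fiber is nonempty. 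Hence $\mathcal{I}$ is a Zariski-open subset of the total space of the rank-$(d+1)$ ``kernel bundle'' over the irreducible GOR variety, so $\mathcal{I}$ is irreducible of dimension $D_G+(d+1)$.

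Then I would map $\mathcal{I}$ into the symmetric $n\times n$ matrices by $(\v,\alpha)\mapsto \Omega_{\v,\alpha}$, the Gram matrix of the rescaled configuration $\{\alpha_1\v_1,\dots,\alpha_n\v_n\}$. Each rescaled configuration is a centered GOR, so $\Omega_{\v,\alpha}\in\LL$; conversely every $\Omega\in\LL$ is the Gram matrix of some centered GOR $\w$, which is the image of $(\w,(1,\dots,1))\in\mathcal{I}$. Thus the image of this polynomial map is exactly $\LL$, which is therefore semi-algebraic (Tarski--Seidenberg) and irreducible (image of an irreducible set under a morphism). For the dimension it suffices to check constant fiber dimension over $\LL$: writing $\Omega = X^tX$ with columns $\w_i$, the fiber is the set of $(\v,\alpha)$ with $\alpha_i\v_i = O\w_i$ for some $O\in O(D)$, and is parameterized by $O(D)\times(\RR\setminus\{0\})^n$, of dimension $\binom{D}{2}+n$. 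Hence $D_L = \dim\mathcal{I} - \binom{D}{2} - n = D_G + (d+1) - \binom{n-d-1}{2} - n$, and substituting $D_G$ and simplifying (say with $k:=n-d-1$) gives $D_L = m - \binom{d+1}{2}$.

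The main obstacle is this dimension bookkeeping --- confirming that the projection out of $\mathcal{I}$ and the Gram map both have everywhere the expected fiber dimension. This is exactly where the hypotheses bite: general linear position forces $X$ to full row rank, pinning $\dim\ker X = d+1$ and making the $O(D)$-action on the Gram fibers free, while Theorem~\ref{thm:alf} keeps those fibers nonempty so nothing collapses. Irreducibility, semi-algebraicity, and the closing arithmetic identity are then routine given the cited results.
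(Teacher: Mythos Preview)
Your argument is correct. The paper does not actually prove this corollary in-line; it simply cites \cite{ghave}, so there is no in-text proof to compare against directly.

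Your incidence-variety packaging is a clean way to run the argument. Irreducibility of $\mathcal{I}$ follows because it is a nonempty Zariski-open piece of a constant-rank kernel bundle over the irreducible GOR variety, with nonemptiness of each fiber being precisely what Theorem~\ref{thm:alf} supplies; surjectivity onto $\LL$ is immediate from the definition; and the fiber over any $\Omega\in\LL$ really is parameterized by $O(D)\times(\RR\setminus\{0\})^n$, since a rank-$D$ Gram factorization is unique up to $O(D)$ and the rescaling $\alpha$ is a free parameter once the scaled configuration is fixed. The closing arithmetic $D_G+(d+1)-\binom{n-d-1}{2}-n = m-\binom{d+1}{2}$ checks out. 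An alternative route, suggested by the paper's definitions, would work directly with $\GOR^0$: show it is irreducible of dimension $D_G - D$ by arguing that the $D$ centering constraints cut the GOR variety transversally, then quotient by the $O(D)$ action alone. Your approach trades that transversality argument for tracking one extra $(\RR\setminus\{0\})^n$ factor, which is arguably the simpler bargain.
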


\subsection{Finishing the Proof}
We are nearly ready for the proof of Theorem 
\ref{thm:pgstr}.

\begin{lemma}
\label{lem:pgeq}
  $\PGST$ is equal to
 LSS.
\end{lemma}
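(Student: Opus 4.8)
The plan is to prove the two set inclusions $\PGST \subseteq \LL$ and $\LL \subseteq \PGST$ separately, using Alfakih's theorem (Theorem~\ref{thm:alf}) together with the general position duality of Lemma~\ref{lem:pdual}.

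First I would show $\LL \subseteq \PGST$. Take $\Omega \in \LL$. By Definition~\ref{defn:gorc}, $\Omega = X^t X$ where $X$ is the configuration matrix of a centered GOR $\v$ of $G$ in $\RR^{n-d-1}$; by construction $\Omega$ is PSD, symmetric, has rank $n-d-1$, has zero entries on non-edges (the orthogonality property of a GOR), and has the all-ones vector in its kernel (centeredness). It remains to check the general position condition on columns, i.e. that every $n-d-1$ columns of $\Omega$ are linearly independent. For this I would invoke Lemma~\ref{lem:pdual}: it suffices to exhibit one affinely spanning $d$-dimensional kernel framework $\p$ for $\Omega$, since then the second half of that lemma gives $\Omega \in \PGST$. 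Such a framework exists because $\Omega$ is a rank $n-d-1$ stress matrix with the all-ones vector in its kernel: its kernel is $(d+1)$-dimensional and contains the all-ones vector, so choosing $P$ to be an $n \times (d+1)$ matrix whose columns span $\ker\Omega$ with last column all-ones yields homogeneous coordinates of a configuration $\p$ in $\RR^d$ with $\Omega P = 0$; this $\p$ affinely spans $\RR^d$ precisely because $\operatorname{rank}\Omega = n-d-1$ forces the affine span to be exactly $d$-dimensional. Hence $\Omega \in \PGST$.

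Next, $\PGST \subseteq \LL$. Take $\Omega \in \PGST$: a PSD symmetric matrix, rank $n-d-1$, all-ones vector in the kernel, zeros on non-edges, and every $n-d-1$ columns independent. Since $\Omega$ is PSD of rank $n-d-1$, write $\Omega = X^t X$ with $X$ an $(n-d-1)\times n$ matrix, and let $\v$ be the vector configuration given by the columns of $X$. The zero pattern of $\Omega$ on non-edges says exactly that $\langle \v_i, \v_j\rangle = 0$ whenever $\{i,j\}$ is a non-edge, so $\v$ is an orthogonal representation of $G$ in $\RR^{n-d-1}$. The all-ones vector in $\ker\Omega$ says $\sum_i \v_i = 0$ up to the Gram relation, i.e. $X \mathbf{1} = 0$, so $\v$ is centered. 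To conclude $\Omega \in \LL$ I need $\v$ to be a \emph{general position} orthogonal representation, i.e. the $\v_i$ in linearly general position in $\RR^{n-d-1}$. This is where Gale duality (Lemma~\ref{lem:linear-gale}) enters: the general position condition on the columns of $\Omega$ is, via the duality between the row space of $\Omega$ and the kernel of $\Omega$ — and noting that $\operatorname{rowspace}(\Omega) = \operatorname{rowspace}(X)$ since $\Omega = X^tX$ with $X$ full row rank — equivalent to the general linear position of the $\v_i$. Concretely, a set of $n-d-1$ columns of $\Omega$ is independent iff the complementary set of $d+1$ rows of a kernel matrix is independent, and running the duality the other way converts "every $n-d-1$ columns of $\Omega$ independent" into "the $\v_i$ are in general position." So $\v$ is a centered GOR and $\Omega = X^tX$ is its Gram matrix, giving $\Omega \in \LL$.

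The main obstacle I anticipate is bookkeeping the two applications of Gale duality cleanly: making sure the roles of "columns of $\Omega$", "rows of a kernel matrix $P$", and "columns of $X$" are correctly matched up, and in particular that general position of the columns of $\Omega$ translates to general \emph{linear} position of the $\v_i$ in $\RR^{n-d-1}$ (as opposed to affine general position of a kernel framework in $\RR^d$, which is the dual statement on the other side). Since both $P$ (kernel of $\Omega$) and $X$ (with $\Omega = X^tX$) are full-rank matrices whose row spans are, respectively, the kernel and the row space of $\Omega$, the two are Gale-dual to each other, and the column-independence condition defining $\PGST$ is self-consistent on both sides; I would just state this carefully rather than re-deriving Gale duality. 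Everything else — PSD-ness, rank, zero pattern, centeredness — transfers by direct inspection of $\Omega = X^tX$.
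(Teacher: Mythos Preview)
Your argument for $\PGST \subseteq \LL$ is correct, though more roundabout than the paper's.  The paper observes directly that the $i$th column of $\Omega = X^tX$ is $X^t\v_i$; since $X$ has full row rank, $X^t$ is injective, so a set of $\v_i$'s is linearly independent if and only if the corresponding columns of $\Omega$ are.  This single observation handles both directions at once, without passing through kernel frameworks or Gale duality.

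Your argument for $\LL \subseteq \PGST$ has a genuine gap.  You write that ``it suffices to exhibit one affinely spanning $d$-dimensional kernel framework $\p$ for $\Omega$, since then the second half of [Lemma~\ref{lem:pdual}] gives $\Omega \in \PGST$.''  But the second half of Lemma~\ref{lem:pdual} requires $\p$ to be in \emph{affine general position}, not merely affinely spanning.  As stated, you have only produced a spanning $\p$, and the lemma does not apply.  You have two easy repairs available: either invoke Theorem~\ref{thm:alf} (which you mentioned in your plan but did not actually use here) to upgrade ``affinely spanning'' to ``affine general position'' before applying Lemma~\ref{lem:pdual}; or, more symmetrically, run the same Gale duality argument you used in the other direction---since $\ker X = \ker\Omega$, general position of the $\v_i$ transfers to general position of the columns of $\Omega$ through the common kernel matrix $P$.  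Either fix works, but without one of them the inclusion $\LL\subseteq\PGST$ is not established.
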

\begin{proof}
Every matrix in LSS has all of these properties.
In particular, the general position of the
vector configurations in
$GOR^0$ gives rise the the required linear independence
of column subsets.

Going the other way, every matrix $\Omega$ in $\PGST$ can be
shown to be in LSS.
By assumption, $\Omega$ is of rank $n-d-1>0$.
It can be
Takagi factored, $\Omega= X^tX$, with the columns of $X$ describing  a real
vector
configuration in $\RR^{n-d-1}$.  This configuration must have
all of the properties of
a centered OR. The linear independence of
each set of $n-d-1$ columns from $\Omega$
places the columns of $X$ in general position.
\end{proof}

\begin{proof}[Proof of Theorem~\ref{thm:pgstr}]
We simply combine
Corollary~\ref{cor:dl} with
Lemma~\ref{lem:pgeq}.
\end{proof}

\section{General position stresses}
With the necessary background in place, we define our main 
new concept.

\begin{definition}\label{def:gstress}
Fix a dimension $d$.
Let $G$ be a graph with $n \ge d+2$
vertices.
Let \defn{$\ST$} be the real quasi projective variety of
real symmetric
$n$-by-$n$ matrices that have $0$ entries
corresponding to non-edges of $G$,  have the all-ones vector in its
kernel,
and have  rank equal to $n-d-1$.

Let \defn{$\GST$} be the real quasi projective variety of $n$-by-$n$
\defn{general position $d$-dimensional stress matrices} for $G$.
Specifically, this is the
matrices in $\ST$
such
that every subset of $n-d-1$ columns is linearly independent.
We call such a matrix a \defn{Gstress}.
\end{definition}

The following is the central result of this paper.

\begin{theorem}
\label{thm:gstr}
Fixing a dimension $d$,
let $G$ be $(d+1)$-connected. Then the set of general position
stress matrices for $G$
is an irreducible real quasi-projective variety of dimension
$m-\binom{d+1}{2}$.

If $G$ is not $(d+1)$-connected,
then this set is empty. \end{theorem}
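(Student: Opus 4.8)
The plan is to mirror the structure of the PSD case (Theorem~\ref{thm:pgstr}), but replace Takagi factorization of PSD matrices by a factorization over $\CC$ that produces a complex orthogonal representation. The "empty" half is immediate: if $G$ is not $(d+1)$-connected, then by Theorem~\ref{thm:lss} (with $D = n-d-1$) it has no GOR in $\RR^{n-d-1}$; the same obstruction applies over $\CC$ since connectivity is a combinatorial condition, and a Gstress $\Omega$ would factor as $X^t X$ with the columns of $X$ forming a complex GOR of $G$ in dimension $n-d-1$ — contradiction. So from now on assume $G$ is $(d+1)$-connected, hence $n \ge d+2$ and $D := n-d-1 \ge 1$.

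The main work is the $(d+1)$-connected case. First I would set up the \emph{complex} analogue of the LSS construction: let $\GOR_\CC$ denote the set of orthogonal representations $\v$ of $G$ in $\CC^{n-d-1}$, using the bilinear (not Hermitian) form $\sum v_k w_k$, that are in general linear position over $\CC$; let $\GOR^0_\CC$ be the centered ones. One shows, exactly as in \cite{Lovasz-Schrijver} (the arguments there are about dimension counts of algebraic sets and transfer verbatim to an algebraically closed field), that $\GOR_\CC$ is an irreducible quasi-projective variety of the same dimension $D_G = n(n-d) - \binom{n+1}{2} + m$ as in Corollary~\ref{cor:gor-dim}, whenever $G$ is $(d+1)$-connected. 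Next, the centering step: Alfakih's full-rank centering map argument (Theorem~\ref{thm:alf}) is again a statement about solving a linear/polynomial system whose solvability is governed by connectivity, so it goes through over $\CC$, showing every complex GOR admits a full-rank centering and that $\GOR^0_\CC$ is irreducible of dimension $D_G - (n-1)$; then the Gram map $\v \mapsto X^t X$ realizes $\GOR^0_\CC$ as a fiber bundle over its image with fibers the complex orthogonal group $O(n-d-1,\CC)$, giving that this image — call it $\LL_\CC$, the complex LSS stresses — is irreducible of dimension $D_G - (n-1) - \binom{n-d-1}{2} = m - \binom{d+1}{2}$, by the bookkeeping in Corollary~\ref{cor:dl}.

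Having built $\LL_\CC$, I would prove $\GST = \LL_\CC \cap (\text{real symmetric matrices})$ by the analogue of Lemma~\ref{lem:pgeq}: every $\Omega \in \GST$ is real symmetric of rank $n-d-1$, so over $\CC$ it factors as $X^t X$ (symmetric matrices of rank $r$ always admit such a complex factorization), and the general-position hypothesis on column subsets of $\Omega$ together with Gale duality (Lemma~\ref{lem:linear-gale}) forces the columns of $X$ into general linear position, so $X$ is a complex GOR; the all-ones vector in $\ker \Omega$ makes it centered. Hence $\GST \subseteq \LL_\CC$, and it is exactly the real points of $\LL_\CC$ that happen to be symmetric — but note that every matrix in $\LL_\CC$ is already symmetric (it is a Gram-type product $X^t X$), so $\GST = \LL_\CC(\RR)$, the real locus. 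The final step is to deduce irreducibility of the \emph{real} variety $\GST$ from irreducibility of the complex variety $\LL_\CC$. This does not follow for free — the real locus of an irreducible complex variety can be reducible or empty. I would argue it using the PSD case as a "seed": by Lemma~\ref{lem:pgeq} and Corollary~\ref{cor:dl}, $\PGST = \LL \subseteq \GST$ is a nonempty semi-algebraic set of the full dimension $m - \binom{d+1}{2} = \dim_\CC \LL_\CC$, so it is Zariski-dense in $\LL_\CC$; since $\PGST$ lies in $\GST = \LL_\CC(\RR)$, the real locus is Zariski-dense in the complex variety and hence (as $\LL_\CC$ is irreducible) the Zariski closure of $\GST$ is all of $\LL_\CC$. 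One then invokes the standard fact that if the real locus of an irreducible complex variety is Zariski-dense, it is itself irreducible as a real variety (equivalently, the variety is defined over $\RR$ and has a smooth real point — which $\PGST$ supplies, being a full-dimensional semialgebraic set). Dimension $m - \binom{d+1}{2}$ is then inherited.

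The main obstacle I anticipate is precisely this last transfer: making rigorous that real-irreducibility follows from complex-irreducibility plus density of the real points, and being careful that the quasi-projective (constructible, not closed) nature of $\GST$ doesn't cause trouble — one must track that the genericity conditions (full rank $n-d-1$, general position of column $(n-d-1)$-subsets) are all Zariski-open conditions that are simultaneously satisfiable over $\RR$, which is exactly what $\PGST \ne \emptyset$ guarantees. A secondary, more bookkeeping-heavy obstacle is verifying that the \cite{Lovasz-Schrijver} and \cite{Alfakih-conn} arguments genuinely survive the passage to $\CC$; the risk is that those proofs secretly use positivity or the order structure of $\RR$ (e.g. in the centering argument, where one might want $\alpha_i$ real and nonzero rather than merely nonzero in $\CC$), so each step needs to be checked to rely only on the field being algebraically closed of characteristic zero and on the combinatorics of $(d+1)$-connectivity.
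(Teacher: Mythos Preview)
Your proposal is correct and follows essentially the same route as the paper: complexify the GOR/LSS construction (using the bilinear, not Hermitian, form), identify $\GST$ as the real locus of the resulting complex LSS variety, and deduce real irreducibility from complex irreducibility together with the fact that $\PGST$ already has the full dimension $m-\binom{d+1}{2}$. The two obstacles you flag are exactly the ones the paper singles out for extra work: the non-$(d+1)$-connected case over $\CC$ needs a modified argument because bilinear orthogonality does not force linear independence, and the real-from-complex irreducibility step is isolated as a standalone lemma (Lemma~\ref{lem:locus}) whose hypothesis---that the real locus has full dimension---is supplied precisely by $\PGST$.
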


Again, we have a lemma
that allows us to transfer general position between stresses
and their kernel frameworks.
\begin{lemma}\label{lem:dual}
Let $G$ be $(d+1)$-connected.  If $\Omega\in \GST$ for  $G$,
and $\p$ affinely spans $\RR^d$ and is in the kernel of $\Omega$,
then $\p$ is in affine
general position.  If $\p$ is in
affine general position,
then any rank $n - d - 1$
stress matrix $\Omega$ with $\p$ in its kernel is
in $\GST$.
\end{lemma}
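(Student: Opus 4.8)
The plan is to prove Lemma~\ref{lem:dual} exactly as Lemma~\ref{lem:pdual} was proved, since the argument there made no use of positive (semi)definiteness---it rested only on Gale duality (Lemma~\ref{lem:linear-gale}) together with the bookkeeping of ranks and the ``all-ones in the kernel'' condition. So first I would fix an affinely spanning configuration $\p$ of $n$ points in $\RR^d$ and a stress matrix $\Omega\in\GST$ with $\p$ in its kernel. Forming the $n\times(d+1)$ matrix $P$ whose rows are the homogeneous coordinates $\hat\p_i^t$, we have $\Omega P = 0$; since $\Omega$ has rank $n-d-1$ and $\p$ affinely spans, the columns of $P$ span the kernel of $\Omega$. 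Gale duality (applied to $A=\Omega$, $X=P$, $r=d+1$) then says: every set of $n-d-1$ columns of $\Omega$ is linearly independent if and only if every complementary set of $d+1$ rows of $P$ is linearly independent. The left-hand side holds by the definition of $\GST$, so every $(d+1)$-subset of the rows of $P$ is linearly independent, i.e.\ $\hat\p$ is in linear general position, i.e.\ $\p$ is in affine general position (using the equivalence recorded in Section~\ref{sec:background}). That is the first half.

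For the converse, suppose $\p$ is in affine general position and $\Omega$ is any rank $n-d-1$ stress matrix for $G$ with $\p$ in its kernel. As above, $\Omega P = 0$ with $P$ the homogeneous-coordinate matrix, and a dimension count ($\mathrm{rank}\,\Omega = n-d-1$, $P$ has $d+1$ columns which are independent because $\p$ affinely spans) shows the columns of $P$ span $\ker\Omega$. Linear general position of $\hat\p$ means every $(d+1)$-subset of rows of $P$ is independent; Gale duality then hands back that every $(n-d-1)$-subset of columns of $\Omega$ is independent, so $\Omega\in\GST$.

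I do not expect any real obstacle here: the only subtlety, compared to the PSD case, is that one must not accidentally invoke anything about signature, and one should double-check that the rank hypothesis ``$\mathrm{rank}\,\Omega = n-d-1$'' is what makes the columns of $P$ span (and not merely lie in) the kernel of $\Omega$---this is where affine general position of $\p$, which forces $\mathrm{rank}\,P = d+1$, is used. The $(d+1)$-connectivity hypothesis is not actually needed for the statement as written (it was only carried along in Lemma~\ref{lem:pdual} for parallelism); I would either keep it for uniformity or note that the proof does not use it. One could also simply write: ``The proof is identical to that of Lemma~\ref{lem:pdual}, with every appeal to positive semidefiniteness deleted,'' but spelling out the Gale-duality step as above makes the lemma self-contained.
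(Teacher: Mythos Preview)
Your proposal is correct and is exactly the paper's approach: the paper's proof simply reads ``The proof of Lemma~\ref{lem:pdual} did not use the signature of $\Omega$ so it works verbatim here,'' and you have spelled out that verbatim argument via Gale duality. Your closing remark that one could just cite Lemma~\ref{lem:pdual} is in fact precisely what the paper does; your observation that $(d+1)$-connectivity is not used in the argument is also accurate.
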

\begin{proof}
The proof of Lemma \ref{lem:pdual} did not use the
signature of $\Omega$ so it works verbatim here.
\end{proof}

The plan is  to first study
GORs in the complex setting, then study
LSS stress matrices in the complex setting. 
We will obtain results for the complex setting that
are, essentially, the same as those from the real PSD 
one.
Finally we take
the real locus of this set of complex stress matrices, 
which will be $\GST$.

\subsection{Complex GORs}
\begin{definition}
\label{def:CGOR}
Let $G$ be a graph and
let $D \ge 1$ be a fixed dimension.
An (COR)  \defn{complex
orthogonal representation} of $G$ in $\CC^D$
is a vector configuration $\v$ indexed by the vertices of $G$ in
$\CC^{D}$ with the following  property:
if $\v_i = (\alpha_1, \ldots, \alpha_D)$
and $i$ is not a neighbor of
$j$ and $\v_j = (\beta_1, \ldots, \beta_D)$,
then
\[
     \sum_{k=1}^D \alpha_k\beta_k = 0
\]
Notice that this ``algebraic dot product'' is defined without conjugation.
The set of CORs form a complex algebraic set.

A (CGOR) \defn{general position complex orthogonal representation} of $G$
in $\CC^D$
is a COR in $\CC^D$ with the added property that
the $\v_i$ are in general linear position.
The set of CGORs form a complex
quasi-projective variety
set.
\end{definition}

\begin{theorem}\label{thm:cgor-dim}
Let $n>D$.
Let $G$ be a $(n-D)$-connected graph with $n$ vertices.
Then the set of CGORs in $\CC^D$ is non-empty and is an irreducible
quasi projective variety.
If $G$ is not $(n-D)$-connected, then it cannot
have a CGOR in $\CC^{D}$, or
a COR in
which the non-neighbors of every vertex are
represented by linearly independent vectors.
\end{theorem}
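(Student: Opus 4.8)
The plan is to mirror, in the complex-analytic/algebraic setting, the proofs of Theorem~\ref{thm:lss} from~\cite{Lovasz-Schrijver}, checking that the real arguments go through over $\CC$ with the ``algebraic dot product'' $\sum_k \alpha_k\beta_k$ in place of the real inner product. I would organize the argument into three parts: nonemptiness, irreducibility, and the converse (failure of connectivity).

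First, for \textbf{nonemptiness and the dimension/irreducibility count} when $G$ is $(n-D)$-connected, I would set up CGORs as follows. Fix a generic vertex $v$. Since $G$ is $(n-D)$-connected, $v$ has at most $D-1$ non-neighbors. Build the COR one vertex at a time in a linear order: when we reach vertex $i$, its already-placed non-neighbors number at most $D-1$, so the orthogonality constraints $\sum_k (\v_i)_k (\v_j)_k = 0$ over those non-neighbors $j$ cut out a linear subspace of $\CC^D$ of dimension at least $D - (D-1) = 1$; choosing $\v_i$ generically in this subspace keeps the configuration in general linear position (genericity avoids the proper subvarieties where some $D$-subset becomes dependent). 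This gives a CGOR. For irreducibility, I would exhibit the CGOR variety as the image of an irreducible parameter space under a dominant rational (in fact regular-on-an-open-set) map: realize it as an iterated tower of affine-bundle-like constructions, where at each step the fiber over a partial general-position COR of the first $i-1$ vertices is a Zariski-open subset of a linear space of fixed dimension (the solution space of the orthogonality equations intersected with the general-position open set). A tower of such maps with irreducible base and irreducible (indeed open-in-affine-space) fibers has irreducible total space; projecting to the last-vertex-inclusive configuration gives that the CGOR variety is irreducible. This is essentially the Lovász–Saks–Schrijver argument, and its transfer to $\CC$ is routine because the ``dot product without conjugation'' is bilinear and polynomial, so all the relevant loci are Zariski-closed or Zariski-open exactly as in the real case.

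Second, for the \textbf{converse}, suppose $G$ is not $(n-D)$-connected, so there is a vertex cut $S$ with $|S| < n-D$ separating the remaining vertices into nonempty parts $A$ and $B$. I would argue that no COR of $G$ in $\CC^D$ can have the property that the non-neighbors of every vertex are represented by linearly independent vectors (this is the weaker hypothesis, which a fortiori rules out a CGOR). Fix any vertex $a \in A$. Its non-neighbors include all of $B$ (since $A$ and $B$ are non-adjacent) and the non-neighbors of $a$ inside $S \cup A$. Consider the subspace $U := \operatorname{span}\{\v_j : j \in B\} \subseteq \CC^D$. Every vertex in $A$ is non-adjacent to every vertex in $B$, so each $\v_a$ with $a \in A$ lies in $U^{\perp}$ (the algebraic orthogonal complement), and likewise every vertex in $B$ lies in the algebraic orthogonal complement of $\operatorname{span}\{\v_a : a \in A\}$. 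Counting dimensions: if the $B$-vectors were to span a space of dimension $|B|$ and the $A$-vectors a space of dimension $|A|$, and these two spans are mutually algebraically orthogonal, then (since the algebraic bilinear form on $\CC^D$ is nondegenerate) we would need $|A| + |B| \le D$. But $|A| + |B| = n - |S| > n - (n-D) = D$, a contradiction — unless one of the spans has dimension strictly less than its vertex count, i.e. some vertex's non-neighbor set is linearly dependent. Here one must be slightly careful, because over $\CC$ the algebraic form can be \emph{isotropic}: a span can be self-orthogonal. I would handle this by the standard fact that for the nondegenerate symmetric bilinear form on $\CC^D$, if $U, W$ are subspaces with $U \subseteq W^{\perp}$ then $\dim U + \dim W \le D$ (this uses only nondegeneracy, not definiteness, since $\dim W^{\perp} = D - \dim W$ exactly, isotropy notwithstanding), so the dimension count is in fact clean.

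The \textbf{main obstacle} I anticipate is making the irreducibility argument fully rigorous: one must verify that at each stage of the vertex-by-vertex construction the fiber dimension is constant (not just ``at least $1$'') over a Zariski-dense open subset of the partial-configuration base, so that the tower really is generically a fibration with irreducible fibers and one can conclude irreducibility of the total space and then of its image. Subtleties arise because the orthogonality equations at vertex $i$ involve only the \emph{non-neighbors} of $i$, whose count can vary with $i$, and because one must simultaneously stay in the general-position open locus; I would address this by fixing the vertex order up front, tracking the exact rank of the relevant coefficient matrix generically at each step, and invoking that a morphism with irreducible base all of whose fibers over a dense open set are irreducible of constant dimension has irreducible source. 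The transfer to the complex field itself is not an obstacle — it is precisely what makes the argument \emph{cleaner}, since we no longer need the real Euclidean structure, only the polynomial identities defining CORs and the open conditions defining general position.
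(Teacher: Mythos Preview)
Your proposal is correct and follows essentially the same route as the paper. Both arguments defer existence and irreducibility to the Lov\'asz--Saks--Schrijver proof (the paper simply notes that a real GOR is already a CGOR for nonemptiness, while you redo the vertex-by-vertex construction over $\CC$; either works), and both handle the non-$(n-D)$-connected case via a separator argument with a dimension count.

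One small difference worth noting: for the converse, the paper proceeds by explicit case analysis (does $X_1$ span? does $X_2$? are the $X_i$ independent?), whereas you invoke directly that nondegeneracy of the bilinear form gives $\dim W^\perp = D - \dim W$, hence $\dim\operatorname{span}(A) + \dim\operatorname{span}(B) \le D < |A|+|B|$, forcing a dependency among the $A$- or $B$-vectors and thus among the non-neighbors of some vertex on the other side. Your formulation is cleaner and avoids the case split; the paper's version makes the role of isotropy more visible but arrives at the same contradiction. Either way, the key point you correctly flag---that over $\CC$ one cannot use ``orthogonal implies independent'' and must instead lean on nondegeneracy---is exactly what the paper identifies as the place where the real LSS argument needs adjustment.
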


\begin{proof}
First we suppose that $G$ is $(n-D)$-connected.
From Theorem~\ref{thm:lss}, we have the
existence of a GOR, which is also a CGOR.
Thus the set of CGOR  is non-empty.
The proof of the irreducibility of the GORs
from~\cite{Lovasz-Schrijver} carries over
identically to the complex setting, thus the
set of CGORs is irreducible as well.
This gives us the first half of theorem.

Because orthogonality under our bilinear form
does not imply linear independence in the complex setting, we need to
deal with the low-connected case slightly differently
than \cite[Theorem 1.1$'$]{Lovasz-Schrijver}.
Suppose that $G$ is not $(n-D)$-connected.
  The graph $G$ is the union of vertex-induced
subgraphs $G_1 = (V_1, E_1)$ and $G_2 = (V_2, E_2)$ so that
$V_1\cap V_2$ has at most $n-D-1$ vertices.  For convenience,
set $X_1 = V_1\setminus V_2$ and $X_2 = V_2\setminus V_1$.  Both
of these sets are non-empty. We also have $|X_1| + |X_2| \ge D+1$.

Now we consider a COR $\v$ for $G$ in $\CC^{D}$.
Suppose first that the vectors $\{\v_i : i\in X_1\}$ have
a $D$-dimensional span.
From the bilinearity of the complex algebraic dot product,
for each $j\in X_2$, we have the linear constraints
\[
    \v_j\cdot \v_i = 0 \qquad \text{(all $i\in X_1$)}
\]
The $|X_2|$ vectors
of $\v$ corresponding to vertices in $X_2$ are, therefore
all zero vectors.
We conclude that $\v$
is not in general position.  Moreover, the non-neighbors of any vertex
in $X_1$ contain all of $X_2$, so there is a vertex with its non-neighbors
linearly dependent.

The same is true if the $X_2$ vectors have a $D$-dimensional span.
So going forward, let us assume that neither  the $X_1$ or $X_2$
vectors have a $D$-dimensional span.

Suppose that there was a linear dependency in either the $X_1$
or $X_2$ vectors. Since neither set has  full span, this would
rule out general position. Moreover, the non-neighbors of any vertex
in $X_i$ contain all of $X_j$, so there is a vertex with its non-neighbors
linearly dependent.

So going forward, let us assume that both the $X_1$ and the $X_2$
vectors are linearly independent.

The returning to the constraints,
for each $j\in X_2$, the linear constraints
\[
    \v_j\cdot \v_i = 0 \qquad \text{(all $i\in X_1$)}
\]
are linearly independent.  The $|X_2|$ vectors
are constrained
to lie in a subspace of dimension at most
\[
    D - |X_1|
\]
and by linear independence, have a cardinality with this same bound.
But this would contradict $|X_1| + |X_2| \ge D+1$.
\end{proof}

In our terminology, we will set $D:=n-d-1$ where $d$ is fixed, and will consider graphs that are $(d+1)$-connected.  These graphs
have at least $d+2$ vertices, so $D \ge 1$.
With this notation, the theorem tells us that
 we need
$(d+1)$-connectivity to obtain
CGORs in $\CC^{n-d-1}$.

\begin{definition}
Let $G$ be a $(d+1)$-connected graph with $n$ vertices,
for some $d$.
Denote by $D_{CG}$ the complex dimension of the set its CGORs in $\CC^{n-d-1}$.
\end{definition}

\begin{theorem}\label{thm:cgor-dim2}
Let $G$ be a $(d+1)$-connected graph with $n$ vertices and  $m$ edges.  Then the set of CGORs has  dimension, $D_{CG}$, equals $D_G$.
\end{theorem}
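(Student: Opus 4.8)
The plan is to show the two dimensions agree by exhibiting the real GOR variety as a subvariety of the complex CGOR variety that is not contained in any proper subvariety, so that the (real) dimension of the former bounds the (complex) dimension of the latter from below, while a matching upper bound comes from the dimension count already available for $D_G$. Concretely: the set of GORs of $G$ in $\RR^{n-d-1}$ sits inside the set of CORs in $\CC^{n-d-1}$, since the real dot product and the (conjugation-free) complex algebraic dot product agree on real vectors, and real general position is a special case of complex general position. By Theorem \ref{thm:lss} (and Corollary \ref{cor:gor-dim}) the real GOR variety is irreducible of real dimension $D_G = n(n-d)-\binom{n+1}{2}+m$, and by Theorem \ref{thm:cgor-dim} the complex CGOR variety is irreducible. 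An irreducible real variety of dimension $D_G$ that is Zariski-dense in its complexification forces the complex variety to have complex dimension $\ge D_G$; combined with an upper bound argument this gives $D_{CG}=D_G$.

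The cleanest way to organize this is to redo the dimension count of Corollary \ref{cor:gor-dim} directly over $\CC$. First I would observe that the set of CORs is cut out in $(\CC^{n-d-1})^n$ by one bilinear equation $\v_i \cdot \v_j = 0$ for each non-edge $\{i,j\}$ of $G$; there are $\binom{n}{2}-m$ such equations. Thus every irreducible component of the COR variety has dimension at least $n(n-d-1) - (\binom{n}{2}-m) = n(n-d)-\binom{n+1}{2}+m = D_G$ by Krull's principal ideal theorem. Since the CGOR variety is a (nonempty, by Theorem \ref{thm:cgor-dim}) Zariski-open subset of one irreducible component of the COR variety, it has dimension $\ge D_G$. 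For the reverse inequality, I would argue that the real GOR locus is Zariski-dense in the CGOR variety: the CGOR variety is irreducible (Theorem \ref{thm:cgor-dim}), it contains the real GOR locus (nonempty by Theorem \ref{thm:lss}), and a nonempty Zariski-closed subset of an irreducible variety that contains a real form of full real dimension must be everything — more precisely, since the real GOR variety is irreducible of real dimension $D_G$ and the CGOR variety is irreducible of complex dimension $\ge D_G$ containing it, the real points already witness $\dim_\CC(\mathrm{CGOR}) = D_G$. Hence $D_{CG} = D_G$.

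The step I expect to require the most care is making the lower-bound codimension count rigorous: I need to know that imposing the $\binom{n}{2}-m$ bilinear orthogonality equations does not, on the CGOR locus, cut the dimension by less than expected for some spurious reason — equivalently, that on the open set where general position holds these equations are "as independent as possible." The real case already tells us (via Theorem \ref{thm:lss} and Corollary \ref{cor:gor-dim}) that the dimension is \emph{exactly} $D_G$ over $\RR$, which pins down the generic codimension there; since the complex variety is irreducible and contains this real locus, its dimension cannot exceed $D_G$ either, because the real locus is Zariski-dense. So really the subtlety is entirely in verifying Zariski-density of the real GOR points in the complex CGOR variety, which follows from irreducibility of the latter together with the fact that the real GOR locus is not contained in any proper complex subvariety — and that in turn holds because a proper subvariety would have strictly smaller real dimension than $D_G$, contradicting Corollary \ref{cor:gor-dim}. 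Assembling these observations yields the equality $D_{CG}=D_G$.
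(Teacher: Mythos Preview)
Your lower bound $D_{CG}\ge D_G$ via Krull's principal ideal theorem is fine. The upper bound, however, has a genuine gap: the argument for Zariski-density of the real GOR locus in CGOR is circular. You say that if GOR were contained in a proper complex subvariety $Z\subsetneq\overline{\text{CGOR}}$, then $Z$ ``would have strictly smaller real dimension than $D_G$.'' But all you know is $\dim_\CC Z < D_{CG}$; since at this stage you only have $D_{CG}\ge D_G$, a proper subvariety $Z$ could still satisfy $\dim_\CC Z \ge D_G$, and hence could perfectly well contain a real locus of real dimension $D_G$. Equivalently: the complexification of the real GOR variety is an irreducible complex variety of dimension exactly $D_G$ sitting inside $\overline{\text{CGOR}}$, but nothing you have written rules out $\overline{\text{CGOR}}$ being strictly larger. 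The inequality you need, $D_{CG}\le D_G$, is precisely what you are trying to prove.

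The paper's proof is simply that the dimension count for $D_G$ in \cite{ghave} (Corollary~\ref{cor:gor-dim}) is an argument about the defining equations that is insensitive to the base field, so it runs verbatim over $\CC$ and yields $D_{CG}=D_G$ directly. That direct computation is what supplies the upper bound you are missing. If you want to stick with your comparison strategy, you would need an \emph{independent} reason that the orthogonality equations are generically independent on CGOR (e.g., a Jacobian rank computation at a smooth real point of GOR, which then persists over $\CC$); but once you do that, you have essentially reproduced the paper's argument and the density detour becomes unnecessary.
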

The proof for the dimension count,
is then no different from that
of Corollary~\ref{cor:gor-dim}
(which is done in~\cite{ghave}).

\subsection{Complex LSS}
We now apply the LSS construction to the complex
setting.

\begin{definition}\label{defn:cgorc}
Let $G$ be a $(d+1)$-connected
graph and $\v$ a CGOR of $G$ in dimension
$n - d -1$.  The $(n - d - 1)\times n$ matrix $X$
with the $\v_i$ as its columns is the \defn{configuration
matrix} of $\v$.  We denote the
algebraic-Gram
matrix $X^tX$ (with no conjugation)
of
$\v$ by $\Psi$.  Note that $\Psi$ has
rank $n - d - 1$, (as $\v$ is in general position
and using Lemma~\ref{lem:sqrank}).

A CGOR $\v$ is called \defn{centered} if its
barycenter is the origin.  We define
$CGOR^0$ to be the complex quasi-projective variety
of centered CGORs.

The Gram matrix $\Psi$
is a (complex)
stress matrix (which we will call $\Omega$)
if and only if $\v$ is centered.  (Recall that the
extra condition is that the all-ones vector is in the
kernel.)

We define the set $CLSS$
to be the collection of stress matrices $\Omega$ arising
as the Gram matrices of centered CGORs.  Denote its complex
dimension by $D_{CL}$.
\end{definition}

\begin{theorem}
\label{thm:cdl}
If $G$ is  $(d+1)$-connected, then
$D_{CL}=D_L$ and CLSS is irreducible.

If $G$ is not $(d+1)$-connected then CLSS is
empty.
\end{theorem}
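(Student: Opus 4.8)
The plan is to mirror, in the complex-analytic setting, the argument that establishes Corollary~\ref{cor:dl} in the real PSD case. The key structural fact is that $\mathrm{CLSS}$ is the image of $\mathrm{CGOR}^0$ under the algebraic-Gram map $X \mapsto X^tX$, and that this map is finite-to-one modulo the obvious symmetry. So the proof breaks into three pieces: (i) irreducibility of $\mathrm{CLSS}$; (ii) the dimension count $D_{CL}=D_L$; and (iii) the empty case when $G$ is not $(d+1)$-connected.

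First I would handle irreducibility. By Theorem~\ref{thm:cgor-dim}, the set of $\mathrm{CGOR}$s in $\CC^{n-d-1}$ is an irreducible quasi-projective variety. The centering condition (barycenter at the origin) is a set of linear equations on the configuration matrix $X$; one must check, exactly as in the real case via Alfakih's full-rank centering map (Theorem~\ref{thm:alf}), that every $\mathrm{CGOR}$ has a full-rank centering map, so that $\mathrm{CGOR}^0$ is the image of $\mathrm{CGOR} \times (\CC^*)^n$ (rescaling each vector) intersected with the centering locus, and is nonempty and irreducible. Alternatively, and more cleanly, I would argue that $\mathrm{CGOR}^0$ is cut out of $\mathrm{CGOR}$ by the torus action of coordinatewise rescaling followed by the linear centering constraint; since a GOR (hence a CGOR) always admits such a rescaling by Theorem~\ref{thm:alf}, the map $\mathrm{CGOR} \times (\CC^*)^n \to \mathrm{CGOR}$, $(\v,\alpha)\mapsto (\alpha_1\v_1,\dots,\alpha_n\v_n)$ has image containing $\mathrm{CGOR}^0$ up to the centering slice, and irreducibility is preserved under taking images of irreducible varieties. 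Then $\mathrm{CLSS}$, being the image of the irreducible $\mathrm{CGOR}^0$ under the polynomial map $X\mapsto X^tX$, is irreducible.

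For the dimension count, the essential observation is that the algebraic-Gram map $X \mapsto X^tX$ from $(n-d-1)\times n$ matrices of full row rank to symmetric matrices of rank $n-d-1$ has fibers equal to orbits of the complex orthogonal group $O(n-d-1,\CC)$ acting on the left; this is the complex analogue of Takagi/Cholesky uniqueness and is where Lemma~\ref{lem:sqrank} (which I am told to assume) does its work, guaranteeing that $X^tX$ retains rank $n-d-1$ precisely because $\v$ is in general position. Hence $\dim \mathrm{CLSS} = \dim \mathrm{CGOR}^0 - \dim O(n-d-1,\CC) = \dim \mathrm{CGOR}^0 - \binom{n-d-1}{2}$. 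The same bookkeeping in the real case gives $D_L = D_G^0 - \binom{n-d-1}{2}$ where $D_G^0=\dim\mathrm{GOR}^0$; combining with $D_{CG}=D_G$ from Theorem~\ref{thm:cgor-dim2} and the fact that the centering constraint has the same codimension over $\RR$ and $\CC$, we get $\dim\mathrm{CGOR}^0 = \dim\mathrm{GOR}^0$, so $D_{CL}=D_L = m - \binom{d+1}{2}$ by Corollary~\ref{cor:dl}. I would present this as: $D_{CL} = D_{CG} - (n - 1) - \binom{n-d-1}{2}$ (subtracting $n-1$ for the independent centering constraints and $\binom{n-d-1}{2}$ for the orthogonal group), and observe the identical formula holds for $D_L$, so the two agree.

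Finally, the empty case: if $G$ is not $(d+1)$-connected, then by Theorem~\ref{thm:cgor-dim} there is no $\mathrm{CGOR}$ of $G$ in $\CC^{n-d-1}$ at all, so a fortiori no centered one, so $\mathrm{CLSS}=\emptyset$. The main obstacle I anticipate is step (ii): verifying that the fiber of $X\mapsto X^tX$ over a point of $\mathrm{CLSS}$ is exactly a single $O(n-d-1,\CC)$-orbit of the expected dimension, rather than something larger or lower-dimensional. Over $\CC$ with the nondegenerate symmetric bilinear form this should follow from the fact that any two full-rank factorizations of a fixed rank-$r$ symmetric matrix differ by a complex orthogonal transformation — but one must be careful that the relevant $X$ has full row rank (supplied by general position and Lemma~\ref{lem:sqrank}) so that the stabilizer is trivial and the orbit dimension is genuinely $\binom{n-d-1}{2}$. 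Once that is pinned down, the dimension count and irreducibility assemble immediately, and the proof of Theorem~\ref{thm:cdl} is complete.
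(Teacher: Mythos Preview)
Your proposal is correct and is exactly what the paper does: the paper's proof simply asserts that the proofs of Theorem~\ref{thm:alf} and Corollary~\ref{cor:dl} carry over verbatim to the complex setting, and you have spelled out that transfer. One minor slip: the centering constraint $\sum_i \v_i = 0$ in $\CC^{n-d-1}$ imposes $n-d-1$ linear conditions, not $n-1$; but since your actual argument for $D_{CL}=D_L$ rests on the parallel structure of the bookkeeping over $\RR$ and $\CC$ (same $D_{CG}=D_G$, same centering codimension, same orthogonal-group fiber dimension) rather than on that explicit number, the conclusion is unaffected.
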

\begin{proof}
This follows using the proofs of
Theorem~\ref{thm:alf} from~\cite{Alfakih-conn}
and Corollary~\ref{cor:dl} from~\cite{ghave}
work verbatim in the complex setting.
\end{proof}

\subsection{Finishing the proof}
We will obtain $\GST$ using a Gram matrix construction.  There 
is a technical issue, which is that, for complex matrices 
$A$, we do not necessarily have 
$\Rank(A^t A)=\Rank(A)$.  However, if $A$ has linearly independent
rows, this does hold.
\begin{lemma}
\label{lem:sqrank}
Let $A$ be a complex $D$-by-$n$ matrix
with rank $D$. Then $A^t A$ has rank $D$.
\end{lemma}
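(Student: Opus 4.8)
The plan is to reduce the rank statement for $A^tA$ to an injectivity statement on the row space of $A$, and to defeat the usual obstruction (isotropic vectors for the bilinear form $\langle x, y\rangle = x^ty$) by using the full-rank hypothesis on the rows to move to a convenient coordinate system. The key point is that $\Rank(A^tA) = D$ is equivalent to $A^tA$ being invertible, since $A^tA$ is $D\times D$; so it suffices to show $A^tA$ has trivial kernel, i.e. that $A^tAx = 0 \implies x = 0$ for $x\in\CC^D$.

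First I would observe that $A^tAx = 0$ implies $x^tA^tAx = 0$, i.e. $(Ax)^t(Ax) = 0$; but in the complex setting this does \emph{not} force $Ax = 0$, because $Ax$ could be a nonzero isotropic vector. So a direct ``Gram matrix'' argument fails, and this is precisely the main obstacle — the place where the hypothesis $\Rank(A) = D$ must be used in an essential way rather than just for bookkeeping. To get around it, I would instead argue as follows: since $A$ has rank $D$ and is $D\times n$, its rows are linearly independent, so there is an invertible $D\times D$ matrix $S$ (a choice of $D$ independent columns, followed by a change of basis) such that, after replacing $A$ by $SA$, the matrix $A$ contains the $D\times D$ identity as a submatrix on some column set $J$. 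Replacing $A$ by $SA$ replaces $A^tA$ by $A^tS^tSA$, which has the same rank as $A^tA$ only if... actually this changes the bilinear form, so instead I would act on the \emph{right}: since $\Rank(A)=D$, there is an invertible $n\times n$ matrix $T$ with $AT = [\,I_D \mid 0\,]$, and then $(AT)^t(AT) = T^tA^tAT$, which has the same rank as $A^tA$ since $T$ is invertible. But $(AT)^t(AT) = [I_D\mid 0]^t[I_D\mid 0]$ has rank exactly $D$ (its top-left block is $I_D$). Hence $\Rank(A^tA) = D$.

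The only step requiring care is the claim that for a rank-$D$ matrix $A$ of size $D\times n$ there exists an invertible $T$ with $AT = [\,I_D\mid 0\,]$: this is standard column reduction (the columns of $A$ span $\CC^D$, pick $D$ of them forming an invertible block, use that block's inverse and column operations), and it is valid over any field, in particular over $\CC$, with no positivity or conjugation subtleties. Once $T$ is in hand the rest is immediate, since conjugation plays no role — the identity $(AT)^t(AT) = T^t(A^tA)T$ holds for the plain transpose, and multiplying on both sides by the invertible matrices $T^t$ and $T$ preserves rank. I expect the write-up to be only a few lines, with the conceptual content entirely in recognizing that right-multiplication (which is compatible with the bilinear form up to an invertible congruence) must be used instead of the naive left-multiplication or the naive $\|Ax\|^2$ argument.
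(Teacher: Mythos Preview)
Your final argument via column reduction is correct and complete: reducing $A$ to $[\,I_D \mid 0\,]$ by an invertible right multiplication $T$ and then computing $(AT)^t(AT)=T^t(A^tA)T$ directly gives the result.  However, your opening framing contains a dimension slip: since $A$ is $D\times n$, the matrix $A^tA$ is $n\times n$, not $D\times D$, so rank $D$ is \emph{not} equivalent to invertibility, and the ``trivial kernel for $x\in\CC^D$'' setup is in the wrong space.  Fortunately you abandon that line before relying on it, and the column-reduction argument you actually carry out is independent of it and sound.

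The paper's proof takes a shorter route: it simply observes that $A^t$ has linearly independent columns (since $A$ has rank $D$), and that left multiplication by a matrix with linearly independent columns preserves rank, whence $\Rank(A^tA)=\Rank(A)=D$.  This avoids any choice of coordinates or normal form.  Your approach achieves the same end by explicit reduction to $[\,I_D\mid 0\,]$; both correctly sidestep the isotropic-vector obstruction you identified (the failure of $(Ax)^t(Ax)=0\Rightarrow Ax=0$ over $\CC$) by never invoking positivity of the bilinear form.
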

This follows from the fact that  $A^t$ must have linearly independent columns by assumption.
Meanwhile, left multiplication by a matrix with 
linearly independent columns does not change the rank.

\begin{lemma}
\label{lem:gstr1}
 $\GST$ is equal to the real locus of CLSS.
\end{lemma}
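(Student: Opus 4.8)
The plan is to prove containment in both directions, using the already-established analogue Lemma~\ref{lem:pgeq} (that $\PGST$ equals $\LL$) as a template, but now working with the algebraic (unconjugated) Gram construction and its Takagi-type factorization over $\CC$. First I would argue that the real locus of $\CLSS$ is contained in $\GST$. A matrix $\Omega$ in $\CLSS$ arises as $\Psi = X^tX$ where $X$ is the configuration matrix of a centered CGOR $\v$ in $\CC^{n-d-1}$; by Lemma~\ref{lem:sqrank} (using that $\v$ is in general position, hence $X$ has full rank $n-d-1$) the matrix $\Omega$ has rank exactly $n-d-1$, and by construction it is symmetric, has zero entries on non-edges (the orthogonality of the COR), and has the all-ones vector in its kernel (centeredness). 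If such an $\Omega$ happens to be real, then it is a real symmetric matrix in $\ST$; and the general linear position of the columns of $X$, together with Gale duality (Lemma~\ref{lem:linear-gale}) applied to the factorization $\Omega P = 0$ where $P$ spans the kernel of $\Omega$, forces every subset of $n-d-1$ columns of $\Omega$ to be linearly independent. Hence $\Omega \in \GST$. This direction is essentially the forward half of Lemma~\ref{lem:pgeq}, now with ``PSD / Takagi'' replaced by ``complex symmetric / algebraic factorization.''

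For the reverse containment, I would take $\Omega \in \GST$: a real symmetric $n$-by-$n$ matrix of rank $n-d-1$, zero on non-edges, all-ones vector in its kernel, with every $n-d-1$ columns independent. Since $\GST \subseteq \RR^N$, it suffices to produce a centered CGOR $\v$ over $\CC$ whose algebraic Gram matrix is $\Omega$. Because $\Omega$ is a complex symmetric matrix of rank $r := n-d-1$, one can factor it as $\Omega = X^tX$ with $X$ an $r \times n$ complex matrix (this is the complex symmetric analogue of Takagi factorization: any symmetric matrix over $\CC$ of rank $r$ can be written as $X^tX$ with $X$ of full row rank $r$; I would cite or quickly justify this, e.g. via $\Omega = Q D Q^t$ or by the standard ``complex Cholesky'' argument). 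The columns $\v_i$ of $X$ then form a vector configuration in $\CC^r$; the zero pattern of $\Omega$ on non-edges says exactly $\v_i \cdot \v_j = 0$ (unconjugated) for non-adjacent $i,j$, i.e.\ $\v$ is a COR; the all-ones vector in $\ker\Omega$ gives $\sum_i \v_i \cdot \v_j = 0$ for all $j$, and since the $\v_j$ span $\CC^r$ (as $X$ has full row rank), this forces $\sum_i \v_i = 0$, so $\v$ is centered. Finally, Gale duality applied to the independence of every $r$ columns of $\Omega$ puts the $\v_i$ in general linear position, so $\v$ is in $\mathit{CGOR}^0$ and $\Omega \in \CLSS$; being real, $\Omega$ lies in the real locus.

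The main obstacle I anticipate is the factorization step over $\CC$: unlike the real PSD case, where Takagi factorization and positive-definiteness on the range make $X^tX$ automatically of the right rank, here one must be careful that the algebraic Gram form $X^tX$ does not drop rank (this is why Lemma~\ref{lem:sqrank} was stated) and that a full-row-rank symmetric factorization actually exists for \emph{every} complex symmetric matrix of the given rank — which it does, but it requires the (standard) structure theory of symmetric bilinear forms over an algebraically closed field rather than the spectral theorem. The rest of the argument is just bookkeeping: translating the three linear-algebraic conditions defining $\ST$ into the three defining conditions of a centered COR, and invoking Gale duality (exactly as in Lemma~\ref{lem:pdual}/\ref{lem:pgeq}) to match ``general position of columns'' with ``general linear position of the representation.'' I would also note explicitly, as the excerpt's preceding lemmas do, that none of these steps uses positivity, so the passage from the real PSD story to the complex story is structural rather than requiring new ideas.
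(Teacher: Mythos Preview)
Your proof is correct and follows essentially the same two-direction strategy as the paper: factor via the complex Takagi decomposition $\Omega = X^tX$ and verify that the defining conditions of $\GST$ and of a centered CGOR correspond term by term. The only tactical difference is that where you invoke Gale duality to transfer general position between the columns of $X$ and those of $\Omega$ (which implicitly uses $\ker X = \ker\Omega$, a consequence of Lemma~\ref{lem:sqrank}), the paper argues directly that a dependency among $k\le n-d-1$ columns $X'$ of $X$ yields a dependency among the corresponding columns $X^tX'$ of $\Omega$, and conversely.
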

\begin{proof}
Let $X$ be the real locus of CLSS and let $\Omega\in X$ be given.
By definition, there is a $\v \in \CGOR^0$ so that $\Omega$ is the 
Gram matrix of $\v$.  The configuration matrix of $\v$ has 
$n-d-1$ rows and $n$ columns in linearly general position, 
so Lemma \ref{lem:sqrank} implies that $\Omega$ has 
rank $n-d-1$.  General posiiton of $\v$ also implies that 
the columns of $\Omega$ are in linearly general position, 
so $\Omega$ is a Gstress.  Hence $X\subseteq \GST$.

Now let $\Omega$ be a general position stress.
By assumption, $\Omega$ is of rank $n-d-1>0$.
It can be Takagi factored, $\Omega= X^tX$, with the columns of 
$X$ describing a  vector configuration in $\CC^{n-d-1}$.
Because $\Omega$ is a stress matrix, $X$ will correspond to 
a centered orthogonal representation of $G$.  

To conclude,
we need to check that this orthogonal representation is 
in general position.  Suppose the contrary.  Then we have 
some $k\le n - d - 1$ columns of $X$ that are linearly
dependent.  Let $X'$ denote the submatrix of $X$ 
obtained by throwing away the other columns.  Then 
$X^t X'$ corresponds to $k$ columns of $\Omega$ that 
are linearly dependent, contradicting that $\Omega$ 
is a general position stress.  Hence $X$ is in general 
position, and, thus, defines an element of $\CGOR^0$. 
\end{proof}
Now we compute the dimension of the GStresses.
\begin{lemma}
\label{lem:gstr2}
 If $G$ is $(d+1)$-connected, then $\GST$ has dimension equal to  $D_L$
\end{lemma}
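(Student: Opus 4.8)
The plan is to show the two inequalities $\dim \GST \le D_L$ and $\dim \GST \ge D_L$ by comparing $\GST$ with $\mathrm{CLSS}$ via Lemma~\ref{lem:gstr1}, which identifies $\GST$ with the real locus of $\mathrm{CLSS}$. First I would recall that by Theorem~\ref{thm:cdl}, $\mathrm{CLSS}$ is an irreducible complex quasi-projective variety of complex dimension $D_{CL} = D_L$. Since $\GST$ is its real locus, we immediately get $\dim_{\RR} \GST \le \dim_{\CC} \mathrm{CLSS} = D_L$: the real points of a complex variety never have larger (real) dimension than the complex dimension of that variety. This handles the easy direction.

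For the reverse inequality $\dim \GST \ge D_L$, the point is that $\GST$ is not just some arbitrary real subset of $\mathrm{CLSS}$ — it is large enough to see the full dimension. The key observation is that $\mathrm{CLSS}$ already \emph{contains} the real PSD general position stresses, i.e.\ $\PGST$: every element of $\PGST$ equals $\LL$ by Lemma~\ref{lem:pgeq}, hence arises as the (real) Gram matrix of a centered real GOR, which is in particular a centered CGOR, so $\PGST \subseteq \mathrm{CLSS}$, and being real, $\PGST$ lies in the real locus, i.e.\ $\PGST \subseteq \GST$. By Theorem~\ref{thm:pgstr}, $\PGST$ is a real semi-algebraic set of dimension $m - \binom{d+1}{2} = D_L$ (using Corollary~\ref{cor:dl}). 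Therefore $\dim \GST \ge \dim \PGST = D_L$, completing the argument.

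Putting the two bounds together gives $\dim \GST = D_L$. An alternative phrasing of the lower bound, if one wants to avoid invoking the PSD machinery a second time, is to note that $\mathrm{CLSS}$ is irreducible over $\CC$ and is defined over $\RR$ (its defining equations — being a stress matrix, rank $n-d-1$, coming from a centered orthogonal representation — all have real coefficients), and it has a smooth real point, namely any element of $\PGST$, where $\PGST$ is nonempty and full-dimensional by Theorem~\ref{thm:pgstr}; for a real-defined irreducible complex variety with a smooth real point, the real locus has real dimension equal to the complex dimension of the variety. Either route works; I would present the first, since $\PGST \subseteq \GST$ together with the dimension of $\PGST$ is already in hand from Section~3.

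The main obstacle — really the only subtlety — is making sure the lower bound is genuinely available and not circular: one must confirm that $\PGST$ is both contained in $\GST$ and known to have dimension $D_L$ independently of the current theorem. Both facts are supplied by earlier results (Lemma~\ref{lem:pgeq}, Corollary~\ref{cor:dl}, Theorem~\ref{thm:pgstr}), so there is no circularity; the PSD real case was established via the Alfakih/LSS construction in Section~3 without reference to $\GST$. The rest is the standard fact that real points of a complex variety cannot exceed it in dimension, which bounds $\GST$ from above by $D_{CL} = D_L$.
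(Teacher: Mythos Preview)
Your proposal is correct and matches the paper's own proof essentially verbatim: the paper also sandwiches $\GST$ between $\PGST$ (giving the lower bound $\dim\GST\ge D_L$) and the real locus of $\mathrm{CLSS}$ (giving the upper bound $\dim\GST\le D_{CL}=D_L$ via Lemma~\ref{lem:gstr1} and Theorem~\ref{thm:cdl}). The only cosmetic difference is that the paper presents the lower bound first and cites~\cite{whitney} explicitly for the real-locus dimension bound.
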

\begin{proof}
$\GST$ includes $\PGST$ and so must be of real dimension
at least $D_L$.

On the other hand the as the real locus of a complex
quasi-projective variety (Lemma~\ref{lem:gstr1}),
its real dimension cannot
be larger than the complex dimension $
D_{CL}$, which
is equal to $D_L$~\cite{whitney}.

\end{proof}

\begin{lemma}
\label{lem:gstr3}
If $G$ is $(d+1)$-connected, then
  $\GST$ is an irreducible real quasi-projective variety.
\end{lemma}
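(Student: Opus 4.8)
The plan is to show that $\GST$ inherits irreducibility from $\CLSS$ by exhibiting it as the real locus of the irreducible complex variety $\CLSS$, together with the observation that this real locus has the ``right'' dimension so no spurious components can hide. Concretely, the argument rests on the following principle: if $V$ is an irreducible complex quasi-projective variety defined over $\RR$, then its real locus $V(\RR)$, if it contains a smooth real point whose local real dimension equals $\dim_\CC V$, must be Zariski-dense in $V(\RR)$ — and in fact irreducible over $\RR$ — because any irreducible component of the real Zariski closure of $V(\RR)$ has dimension at most $\dim_\CC V$, while the closure of the component through such a smooth point already attains this. So the heart of the matter is producing one good real point.

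First I would recall, via Lemma~\ref{lem:gstr1}, that $\GST$ is precisely the real locus of $\CLSS$, and that $\CLSS$ is irreducible of complex dimension $D_{CL}=D_L=m-\binom{d+1}{2}$ by Theorem~\ref{thm:cdl}. Second, I would invoke Lemma~\ref{lem:gstr2}, which already tells us $\dim_\RR \GST = D_L$; since $\PGST\subseteq\GST$ and $\PGST$ is nonempty, irreducible, and of dimension exactly $D_L$ by Theorem~\ref{thm:pgstr}, we have a whole ``real slab'' of $\GST$ of full dimension sitting inside it. Third, I would take the real Zariski closure $\overline{\GST}$ and note that every irreducible component of it is contained in $\CLSS$ (as $\GST\subseteq\CLSS$ and $\CLSS$ is closed in the relevant ambient space), hence each such component has dimension $\le D_{CL}=D_L$; but $\overline{\GST}\supseteq\overline{\PGST}$, and $\overline{\PGST}$ is irreducible of dimension $D_L$, so it is one of the components and is the unique one of maximal dimension. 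It then remains to rule out lower-dimensional components, i.e.\ to show $\GST\subseteq\overline{\PGST}$.

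For that last containment — which I expect to be the main obstacle — I would argue that $\PGST$ is Zariski-dense in $\CLSS$ (over $\CC$), so its real Zariski closure contains $\CLSS(\RR)=\GST$. Density of $\PGST$ in $\CLSS$ follows because $\CLSS$ is irreducible and $\PGST$ is the image of the real points of $\CGOR^0$ (equivalently, of the real centered GORs, which form $\GOR^0$) under the Gram-matrix map, and $\GOR^0$ is a nonempty real-dimension-$D_{CG}$ subset of the irreducible complex variety $\CGOR^0$ of the same complex dimension $D_{CG}$ — so $\GOR^0$ is Zariski-dense in $\CGOR^0$, and applying the (polynomial, hence continuous for the Zariski topology) Gram map and using Theorem~\ref{thm:cdl} transports this density to $\PGST$ being dense in $\CLSS$. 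Taking real Zariski closures, $\overline{\PGST}\supseteq\overline{\CLSS\cap\RR^N}=\GST$ (here using that $\CLSS$ is defined over $\RR$, so its real locus is Zariski-dense in no component it meets — more carefully, that $\overline{\PGST}^{\,\CC}=\CLSS$ forces $\overline{\PGST}^{\,\RR}\supseteq \CLSS(\RR)$). Combined with the dimension bound on components from the previous step, this shows $\overline{\GST}=\overline{\PGST}$ is irreducible, and therefore $\GST$, as a dense quasi-projective subset, is irreducible as a real quasi-projective variety. I would be careful in the writeup to state precisely which ambient projective (or affine) space is used and to cite the standard fact that a complex variety defined over $\RR$ whose real points are Zariski-dense over $\CC$ has real points Zariski-dense in every component — the cleaner route may in fact be to simply say: $\GST$ is the real locus of the irreducible $\CLSS$, $\GST$ is Zariski-dense in $\CLSS$ (shown above), hence $\GST$ is irreducible, since any decomposition $\GST = Z_1\cup Z_2$ into proper real-closed subsets would, upon taking complex Zariski closures, decompose the irreducible $\CLSS$.
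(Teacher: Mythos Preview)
Your proposal is correct in substance and rests on the same core idea as the paper: $\GST$ is the real locus of the irreducible complex quasi-projective variety $\CLSS$ (Lemma~\ref{lem:gstr1}), and because its real dimension equals the complex dimension $D_L$ of $\CLSS$ (Lemma~\ref{lem:gstr2}, Theorem~\ref{thm:cdl}), it must be irreducible. The paper isolates exactly this principle as the appendix Lemma~\ref{lem:locus} (``if the real locus of an irreducible complex quasi-projective variety has real dimension equal to the complex dimension, then it is irreducible'') and then the proof of Lemma~\ref{lem:gstr3} is a one-line citation of Lemmas~\ref{lem:gstr1}, \ref{lem:gstr2}, Theorem~\ref{thm:cdl}, and Lemma~\ref{lem:locus}.

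Your route differs only in packaging: you reprove the content of Lemma~\ref{lem:locus} in situ, and you take a detour through establishing Zariski-density of $\PGST$ in $\CLSS$ via the chain $\GOR^0\subset\CGOR^0$. That detour is unnecessary --- the dimension equality $\dim_\RR \GST = \dim_\CC \CLSS$ already forces the complexification of $\overline{\GST}$ to equal $\overline{\CLSS}$ (a proper subvariety of an irreducible variety has strictly smaller dimension), and reducibility of $\GST$ would then force reducibility of $\CLSS$. You even flag this ``cleaner route'' yourself in your final sentence; that \emph{is} the paper's argument, and you should lead with it rather than the $\PGST$/$\GOR^0$ density chain, which requires you to track dimensions of $\GOR^0$ versus $\CGOR^0$ that the paper never explicitly computes.
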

\begin{proof}
The lemma follows from Lemma~\ref{lem:locus} in light of
lemmas~\ref{lem:gstr1} and~\ref{lem:gstr2} and Theorem~\ref{thm:cdl}.
\end{proof}

\begin{proof}[Proof of Theorem~\ref{thm:gstr}]
This combines
Lemmas~\ref{lem:gstr2} and
\ref{lem:gstr3}.
\end{proof}

\subsection{An easier special case}
We have used CGORs to describe the general position
stresses of any $(d+1)$-connected graph $G$ with $m$ edges.  In
the special case that $G$ also has
$K_{d+1}$ as a subgraph, we can obtain the same characterization
of its general position stresses without resorting to GOR
technology.
In particular,
we can parameterize Gstr by a
Zariski open subset of $\RR^{m - \binom{d+1}{2}}$ using a
more elementary
result of Linial, Lovász and Widgerson \cite{llw}.
\begin{lemma}
\label{lem:param1}
Let $G$ be a $(d+1)$-connected graph with $m$ edges  and suppose that $H$ is a
$K_{d+1}$ subgraph of of $G$.  Then, for a Zariski open subset of assignments
$\w_{G\setminus H}$
of weights to the of edges of $G$ not contained in $H$, there is a
unique general position stress $\Omega$ that agrees with $\w_{G\setminus H}$
on the edges of $G\setminus H$.
\end{lemma}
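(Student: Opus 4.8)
The plan is to use the "rubber band" theorem of Linial--Lov\'asz--Widgerson~\cite{llw} as follows.  Fix the $K_{d+1}$ subgraph $H$ on vertex set $S$, so $|S| = d+1$.  Given an assignment $\w_{G\setminus H}$ of weights to the edges not in $H$, I would form the partial stress matrix $\Omega_0$ that has these weights in the corresponding off-diagonal positions, zero in the non-edge positions and in the positions corresponding to edges of $H$, and whose diagonal is chosen so that each row indexed by a vertex \emph{outside} $S$ sums to zero.  The rows indexed by vertices of $S$ need not sum to zero yet; the idea is that the $\binom{d+1}{2}$ free weights on the edges of $H$ are exactly the degrees of freedom we need to restore equilibrium there.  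Concretely, I would pick any vertex, say vertex $1\in S$, delete its row and column, and observe that the resulting $(n-1)\times(n-1)$ principal submatrix with the $H$-weights as unknowns is (up to sign and reindexing) the weighted graph Laplacian studied in~\cite{llw}: for generic $\w_{G\setminus H}$ the "rubber band" equilibrium for $G\setminus\{1\}$ with the boundary given by the neighbours of $1$ in $S$ has a unique solution, and this pins down the $H$-weights uniquely as a rational function of $\w_{G\setminus H}$.  One then checks that with these weights the full matrix $\Omega$ indeed has the all-ones vector in its kernel and has rank exactly $n-d-1$: the kernel is spanned by $\hat\p$ for the corresponding rubber-band configuration $\p$ together with the all-ones vector, and $(d+1)$-connectivity guarantees (via~\cite{llw}) that $\p$ affinely spans $\RR^d$ for generic input, so the rank is not lower.

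Next I would invoke Lemma~\ref{lem:dual} to upgrade this rank-$(n-d-1)$ stress to a Gstress.  The rubber-band configuration $\p$ produced above is, for generic $\w_{G\setminus H}$, in affine general position --- this is again part of what the Linial--Lov\'asz--Widgerson analysis gives us, since the genericity of the edge weights transfers to genericity of the equilibrium configuration on a Zariski-open set.  By Lemma~\ref{lem:dual}, any rank $n-d-1$ stress matrix with an affinely general-position configuration in its kernel lies in $\GST$, so $\Omega\in\GST$.

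For uniqueness, suppose $\Omega'\in\GST$ also agrees with $\w_{G\setminus H}$ on the edges of $G\setminus H$.  Then $\Omega - \Omega'$ is a symmetric matrix supported on the edges of $H$ plus the diagonal, with zero row sums, i.e.\ it is a stress matrix of the graph $K_{d+1}$ on $S$ (extended by zeros).  I would show the space of such matrices is "absorbed" by the rubber-band equations: both $\Omega$ and $\Omega'$ have a kernel containing the all-ones vector and a $d$-dimensional affinely spanning configuration (by Lemma~\ref{lem:dual} applied to each), and the configuration is determined off $S$ by the already-fixed weights $\w_{G\setminus H}$ via the equilibrium condition at the outside vertices; the uniqueness clause of~\cite{llw} then forces the configurations, hence the stress matrices, to coincide.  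Equivalently, the rational map of Theorem~1.1 sending $\w_{G\setminus H}$ to $\Omega$ is well-defined and single-valued on a Zariski-open set precisely because the rubber-band equilibrium is unique there.

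The main obstacle I expect is the bookkeeping in the uniqueness step: one has to argue carefully that fixing the weights on $G\setminus H$ together with the stress-matrix constraints (zero row sums, all-ones in the kernel) leaves no freedom in the $H$-weights, and the cleanest way is to identify the relevant linear system with the rubber-band Laplacian system of~\cite{llw} and cite its unique solvability for generic data, rather than re-deriving it.  A secondary technical point is confirming that the Zariski-open set on which the construction works is nonempty --- but this follows because $\GST$ is nonempty (Theorem~\ref{thm:gstr}, using $(d+1)$-connectivity) and every Gstress restricts to \emph{some} weight assignment on $G\setminus H$, and the fibre of that restriction is, by the argument above, a single point.
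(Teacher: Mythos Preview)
Your overall strategy matches the paper's---use the rubber-band construction of \cite{llw} to obtain a configuration, complete to a stress, verify rank and general position via Lemma~\ref{lem:dual}---but the concrete mechanism you propose for determining the $H$-weights does not work. The rubber-band linear system in \cite{llw} solves for the \emph{positions} of the unpinned vertices $V\setminus S$ given the weights and the pinned positions $\p_S$; it is an $(n-d-1)\times(n-d-1)$ system in which all edge weights appear as \emph{coefficients}, not unknowns. The $H$-weights do not enter this system at all, since every edge of $H$ joins two pinned vertices, so the rubber-band step cannot ``pin down the $H$-weights'' as you assert. Deleting a single row and column does not yield the relevant Laplacian submatrix, and pinning only the $d$ vertices of $S\setminus\{1\}$ as boundary would force the equilibrium configuration into a $(d-1)$-dimensional flat. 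The paper handles this as a separate second step: once $\p$ is known, the residual forces at the vertices of $H$ form an equilibrium load (Lemma~\ref{lem: eq load support}), and since $(K_{d+1},\p_H)$ is statically rigid and independent this load has a unique resolution $\w_H$ (Lemma~\ref{lem: static inf}).

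Your uniqueness argument is close but stops one line short. Once you know (for $\w_{G\setminus H}$ in the Zariski-open set where the rubber-band system is non-singular) that $\Omega$ and $\Omega'$ share a kernel configuration $\p$, the difference $\Omega-\Omega'$ is supported on $S\times S$, has zero row sums, and annihilates $\hat\p_S$---i.e., it is an equilibrium stress matrix for $(K_{d+1},\p_S)$. Since $\p_S$ is affinely independent, this stress space is zero, so $\Omega=\Omega'$. Saying this is ``absorbed by the rubber-band equations'' obscures the point. The paper argues uniqueness slightly differently, by observing that an affine change of $\p_H$ induces the same affine change on all of $\p$ and leaves the completion $\Omega$ unchanged.
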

\begin{proof}
It is well-known (e.g., \cite{llw}), that for
$\w_{G\setminus H}$
consisting of positive real numbers and fixed,
and affinely independent positions $\p_H$ for the vertices of $H$,
there
is a unique, configuration $\p$ in dimension $d$
that agrees with $\p_H$ on the vertices of $H$ with the additional
property that the vertices of $G\setminus H$ are in equilibrium with the
weights $\w_{G\setminus H}$.  This $\p$ can be found as the unique solution to  a non-singular
linear system depending on $G$, $\p_H$, and $\w_{G\setminus H}$%
\footnote{This system arises as the gradient of a convex energy function,
but we don't need that here.  All that is important is that it can be solved.}
Since matrix singularity is an algebraic condition,
this existence and uniqueness is true
over a Zariski open subset
even when allowing
negative numbers.

Once we have $\p_{G\setminus H}$, we can use it to complete
$\w_{G\setminus H}$ to an equilibrium stress as follows.
The resultant forces%
\footnote{Computing these is
where we need to known the vertex positions.}%
on $H$
are an equilibrium load by Lemma \ref{lem: eq load support}.
Since $(H,\p_H)$ is statically rigid and independent, this load has a
unique resolution $\w_H$ by Lemma \ref{lem: static inf}.
Combining $\w_H$ and
$\w_{G\setminus H}$, we get equilibrium at every vertex. So this process
gives us
a unique $\Omega$ for
fixed $\w_{G\setminus H}$
and  $\p_H$.

As a stress for $\p$, $\Omega$ must have
rank at most $n-d-1$.
Given $\p_H$,
the rest of
$\p$ was uniquely determined by
the equillibrium condition
of our linear system,
so $\Omega$ must have rank
equal to
$n-d-1$.
Moreover its kernel frameworks must be
the affine transforms of $\p$.

Notice that in our linear system,
if we change
the positions of $\p_H$,
a process that
can be modeled as an affine transform,
then the solution $\p$ will undergo
the same affine transform.
$\Omega$ will  also a be a
stress for this
affinely transformed $\p$, and so must
be the unique stress completion
that will be obtained
from our construction.
Thus $\w_{G\setminus H}$ has a
unique stress completion.

Finally we need to establish
the general position property.
It is shown in
\cite[Theorem 2.4]{llw}, that for
$\w_{G\setminus H}$ consisting of
generic positive real numbers and fixed,
affinely independent positions $\p_H$ for the vertices of $H$,
that the resulting $\p$ is
in affine general position in dimension $d$.
This result also holds
generically
when allowing negative numbers, and
so holds over a Zariski open subset.
Since $\p$ is in affine
general position Lemma \ref{lem:dual}
says the stress we obtained is a general position stress.

\end{proof}

\begin{definition}
Let $G$ be a $(d+1)$-connected graph with a $K_{d+1}$ subgraph $H$.  We define
the map $\rho : X \to \ST$ to be the one from Lemma \ref{lem:param1}
where
$X\subseteq \RR^{m - \binom{d+1}{2}}$ is the
implicit Zariski open domain where the linear system is non-singular.
\end{definition}

The map $\rho$ is clearly injective.
Now we argue that is it surjective onto $\GST$.
\begin{lemma}
\label{lem:param2}
Let $G$ be a $(d+1)$-connected graph with $m$ edges  and suppose that $H$ is a
$K_{d+1}$ subgraph of of $G$.
Let $\Omega$ be a general position stress.
Let $\w_{G\setminus H}$ be its values on the
edges not contained in $H$. Then $\rho(\w_{G\setminus H})$ is well defined
and equal to $\Omega$.
\end{lemma}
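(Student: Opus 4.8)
The plan is to show that $\Omega$ lies in the image of $\rho$ by verifying that its restriction $\w_{G\setminus H}$ to the non-$H$ edges is in the domain $X$ where the defining linear system of Lemma~\ref{lem:param1} is non-singular, and then invoking the uniqueness clause of that lemma. The key observation is that every element of $\GST$ already is a stress matrix of some kernel framework in affine general position, so we have a configuration to work with; the issue is only to check that this configuration is the one produced by the rubber-band linear system, which forces $\w_{G\setminus H}$ into $X$.

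First I would use Lemma~\ref{lem:dual}: since $\Omega\in\GST$ has rank exactly $n-d-1$, any configuration $\p$ in its kernel that affinely spans $\RR^d$ is in affine general position; such a $\p$ exists (e.g.\ take $P$ with columns spanning $\ker\Omega$, last column all-ones, as in the proof of Lemma~\ref{lem:pdual}). In particular the vertices $\p_H$ of the $K_{d+1}$ subgraph $H$ are affinely independent. Because $\Omega$ is an equilibrium stress matrix for $(G,\p)$, equation~\eqref{eqn:equilibrium} holds at every vertex; restricting to the vertices of $G\setminus H$ gives exactly the equilibrium equations of the rubber-band linear system, with data $\p_H$ and $\w_{G\setminus H}$, having $\p_{G\setminus H}$ as a solution. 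Next I would argue this solution is unique, i.e.\ the system is non-singular, hence $\w_{G\setminus H}\in X$: uniqueness is equivalent to the corresponding homogeneous system having only the zero solution, and a nonzero solution would be an infinitesimal flex of the rubber-band network fixing $H$; standard rigidity of the rubber-band construction (as in \cite{llw}) rules this out under $(d+1)$-connectivity. Alternatively, and perhaps more cleanly, one can note that $\rho(\w_{G\setminus H})$ being well-defined is a Zariski-open condition and that the complement (the "bad locus") cannot contain all of $\GST$, since $\GST$ is irreducible (Theorem~\ref{thm:gstr}) and meets the good locus — but I prefer the direct argument since it also pins down the value.

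Finally, with $\w_{G\setminus H}\in X$, the construction in Lemma~\ref{lem:param1} produces a unique general position stress $\rho(\w_{G\setminus H})$ agreeing with $\w_{G\setminus H}$ off $H$. It remains to see $\Omega=\rho(\w_{G\setminus H})$. Both are equilibrium stress matrices that (i) agree on the edges of $G\setminus H$ and (ii) have $\p_H$-extending configurations in their kernels; indeed $\Omega$ has $\p$ in its kernel, and the construction of $\rho(\w_{G\setminus H})$ in Lemma~\ref{lem:param1} was built precisely from the configuration solving the same linear system with the same $\p_H$ — which by non-singularity is the very same $\p_{G\setminus H}$. So both $\Omega$ and $\rho(\w_{G\setminus H})$ are stress matrices for the same configuration $\p$ agreeing on $G\setminus H$; their difference is a stress for $(G,\p)$ supported only on the edges of $H$, i.e.\ an equilibrium stress of $(H,\p_H)$. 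Since $(H,\p_H)=(K_{d+1},\p_H)$ is affinely independent it is infinitesimally (statically) rigid and independent, so carries no nonzero equilibrium stress; hence $\Omega=\rho(\w_{G\setminus H})$.

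The main obstacle is the non-singularity check, i.e.\ verifying $\w_{G\setminus H}\in X$: one must be careful that a general-position Gstress — which may be indefinite, unlike in the positive rubber-band case — still yields a configuration on which the rubber-band system is non-degenerate. The cleanest route around this is to argue that $\p$ is in affine general position (Lemma~\ref{lem:dual}) and that affine general position of $\p$ forces the rubber-band coefficient matrix (which depends only on $G$, the combinatorics of $H$, and $\p_H$, not on $\w$) to be invertible, exactly as used in the last paragraph of the proof of Lemma~\ref{lem:param1}; uniqueness of the solution then gives $\w_{G\setminus H}\in X$ and the rest follows formally.
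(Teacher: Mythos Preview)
Your final equality step is fine: the difference $\Omega-\rho(\w_{G\setminus H})$ is a stress of $(G,\p)$ supported on $H=K_{d+1}$, and an affinely independent simplex carries no nonzero equilibrium stress. The paper says this more tersely as ``$\Omega$ is a completion, and the completion is unique.''

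The gap is in showing $\w_{G\setminus H}\in X$, and none of your three routes closes it. The LLW non-singularity result is for \emph{positive} weights; a Gstress is typically indefinite, so there is no sign condition on $\w_{G\setminus H}$ to exploit. Zariski-openness of $X$ plus irreducibility of $\GST$ only handles \emph{generic} Gstresses, not every $\Omega$ as the lemma asserts. And your last paragraph rests on a false premise: writing the equilibrium equations at the non-$H$ vertices coordinatewise gives $\Omega_{V'V'}\,\p^{(k)}_{V'}=-\Omega_{V'H}\,\p_H^{(k)}$ with $V'=V(G)\setminus V(H)$, and the coefficient block $\Omega_{V'V'}$ is built entirely out of the weights $\w_{G\setminus H}$, not out of $\p_H$. (The last paragraph of Lemma~\ref{lem:param1} you cite establishes general position of the output, not invertibility of this block.)

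What the paper actually uses is the Gstress hypothesis itself. Since $\Omega$ is symmetric and every $n-d-1$ of its columns are independent, the $n-d-1$ rows indexed by $V'$ span the row space of $\Omega$; hence any vector annihilated by those rows already lies in $\ker\Omega$. A homogeneous rubber-band solution $x$ extends by zeros on $H$ to exactly such a vector, so this extension lies in $\ker\Omega$; but by Lemma~\ref{lem:dual} the $H$-block of any basis for $\ker\Omega$ is nonsingular (affine independence of $\p_H$), so the only kernel vector vanishing on $H$ is zero. Thus $\Omega_{V'V'}$ is invertible and $\w_{G\setminus H}\in X$. The paper compresses this into the clause ``due to the rank of $\Omega$, there is only one $\p$ in the kernel up to an affine transform, thus the linear system is non-singular'': the point is that every solution of the rubber-band system with $\p_H$ pinned is already a kernel framework of $\Omega$, and those are unique.
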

\begin{proof}
By our Gstr assumption, any affinely spanning $\p$ in the kernel of
$\Omega$
(which must be in general position from Lemma~\ref{lem:dual})
is a non-singular affine image
of a unique
kernel configuration $\p_0$ with the vertices of $H$ the standard simplex.
Meanwhile, due to the rank of $\Omega$, there is only one $\p$ in the kernel, up to an affine transform.
Thus the linear system used to define the map $\rho$ is non-singular,
so the map is well defined here.
Hence we can use the construction of Lemma~\ref{lem:param1}
to uniquely complete the weights on the edges of $G\setminus H$ coming from $\Omega$.
This unique completion  must be $\Omega$,
since $\Omega$ is a completion.
\end{proof}

We summarize this section by the following.
\begin{theorem}
Let $G$ be a $(d+1)$-connected graph with $m$ edges  and suppose that $H$ is a
$K_{d+1}$ subgraph of of $G$.
Then $\GST$ is irreducible, of dimension $m - \binom{d+1}{2}$, and parameterized by
a Zariski open subset of $\RR^{m - \binom{d+1}{2}}$,
representing the weights on the edges of $G\setminus H$,
under a rational map.
\end{theorem}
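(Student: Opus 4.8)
The plan is to recognize the map $\rho$ of Lemma~\ref{lem:param1} as an isomorphism of quasi-projective varieties onto $\GST$, after which every assertion is immediate. First I would check that $\rho$ is a rational map: by construction $\rho(\w_{G\setminus H})$ is produced by solving two nonsingular linear systems --- first the system of Lemma~\ref{lem:param1} recovering the configuration $\p$ from $\w_{G\setminus H}$ and the fixed frame $\p_H$, then the system resolving the resultant equilibrium load on $H$ into the weights $\w_H$ --- so Cramer's rule expresses every output coordinate as a ratio of polynomials in $\w_{G\setminus H}$, regular exactly where the two determinants do not vanish, i.e.\ on the Zariski open set $X$ of the definition preceding Lemma~\ref{lem:param2}.

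Next I would cut $\rho$ down to an honest bijection. Let $U\subseteq X$ be the nonempty Zariski open subset supplied by Lemma~\ref{lem:param1} on which $\rho$ takes values in $\GST$, so that $\rho|_U\colon U\to\GST$ is a morphism. It is injective, since the entries of $\rho(\w_{G\setminus H})$ on the edges of $G\setminus H$ reproduce $\w_{G\setminus H}$, and surjective onto $\GST$ by Lemma~\ref{lem:param2} (the restriction of any Gstress lies in $U$ because, by Lemma~\ref{lem:dual}, its kernel framework is in general position, which is what the defining conditions of $U$ encode). The inverse is the coordinate projection $\Omega\mapsto(\Omega_{ij})_{\{i,j\}\in E(G)\setminus E(H)}$, up to sign, which is regular --- indeed linear. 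Hence $\rho$ restricts to an isomorphism $U\to\GST$ of quasi-projective varieties. Since $U$ is a nonempty Zariski open subset of $\RR^{m-\binom{d+1}{2}}$, it is irreducible of dimension $m-\binom{d+1}{2}$, and both properties transfer to $\GST$; the parameterization statement is exactly the existence of $\rho$. (Alternatively, one may keep only the surjectivity of $\rho$ and the injectivity remark for the parameterization, and quote Theorem~\ref{thm:gstr} for irreducibility and dimension.)

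There is no deep obstacle: the content is a repackaging of Lemmas~\ref{lem:param1} and~\ref{lem:param2}. The only point needing a little care is the domain bookkeeping above --- confirming that $U$ is genuinely Zariski open and nonempty, and that the restriction of an arbitrary Gstress lands in $U$ rather than merely in $X$, which is where Lemma~\ref{lem:dual} enters.
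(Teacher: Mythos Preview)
Your proposal is correct and follows the same approach as the paper: the paper simply states the theorem as a summary of Lemmas~\ref{lem:param1} and~\ref{lem:param2} without writing a separate proof, so what you have done is spell out the argument those lemmas implicitly contain. Your extra care in verifying that $U$ is genuinely Zariski open (because on $X$ the output always has rank exactly $n-d-1$, so membership in $\GST$ reduces to the open general-position condition) and that the inverse is the regular coordinate projection is a useful addition that the paper leaves to the reader.
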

This result agrees with Theorem \ref{thm:gstr}, while circumventing the machinery
or orthogonal representations.  The proof also shows that the dimension
$m - \binom{d+1}{2}$ appearing in Theorem \ref{thm:gstr} has a natural
interpretation when there is a $K_{d+1}$ subgraph: we can freely assign weights to
the rest of the edges and then compensate on edges of the $K_{d+1}$.  That this
dimension stays the same without a $K_{d+1}$ subgraph is interesting
and somewhat deeper.

\section{General position stressable frameworks}
\label{sec:gstressable}

Now we turn the tables and look at frameworks that are
in equilibrium under a Gstress.

\begin{definition}

  Let $d$ be a dimension and
  let $G$ be a graph with $n \ge d+2$
vertices.
 We define
  the \defn{Gstressable frameworks} to be the set of
  frameworks $(G,\p)$ in $\RR^d$
  such that $\p$
  is in general affine position, and has
  an equilibrium stress of rank $n-d-1$.
\end{definition}

\begin{lemma}
\label{lem:gsable}
A framework is Gstressable
iff it has full span and a stress from $\GST$.
\end{lemma}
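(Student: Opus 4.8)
The plan is to establish the biconditional in Lemma~\ref{lem:gsable} directly from the definitions, using Lemma~\ref{lem:dual} to pass between general position of the stress matrix and general position of its kernel frameworks. The ``only if'' direction is essentially immediate: if $(G,\p)$ is Gstressable, then by definition $\p$ has full affine span (indeed is in affine general position) and there is an equilibrium stress vector $\omega$ for $(G,\p)$ whose matrix $\Omega$ has rank $n-d-1$. Since $\p$ affinely spans $\RR^d$ and lies in the kernel of $\Omega$, and $\p$ is in affine general position, Lemma~\ref{lem:dual} tells us that $\Omega\in\GST$. So $(G,\p)$ has full span and a stress from $\GST$.

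For the ``if'' direction, suppose $(G,\p)$ has $d$-dimensional affine span and $\Omega\in\GST$ is an equilibrium stress matrix for $(G,\p)$. By definition of $\GST$, the matrix $\Omega$ has rank exactly $n-d-1$, so in particular $(G,\p)$ has an equilibrium stress of rank $n-d-1$. It remains to see that $\p$ is in affine general position; but $\p$ affinely spans $\RR^d$ and is in the kernel of $\Omega$, so the first assertion of Lemma~\ref{lem:dual} applies and gives exactly this. Hence $(G,\p)$ is Gstressable.

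I would also note in passing the only subtlety worth flagging: one must confirm that ``$(G,\p)$ has $\Omega$ as an equilibrium stress matrix'' and ``$\p$ lies in the kernel of $\Omega$'' are the same condition here, which is immediate from the definition of the stress matrix (for each spatial coordinate the corresponding vector in $\RR^n$ lies in $\ker\Omega$, together with the all-ones vector, so $\hat\p$ is in the kernel in the homogeneous sense). I do not anticipate a genuine obstacle; the content of the lemma is entirely repackaged in Lemma~\ref{lem:dual}, and the proof is a two-line application of it in each direction.
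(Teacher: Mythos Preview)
Your proof is correct and matches the paper's approach exactly: the paper's entire proof is the single line ``See Lemma~\ref{lem:dual},'' and what you have written is precisely the two-direction unpacking of that reference. The only remark is that Lemma~\ref{lem:dual} is stated with a $(d+1)$-connectivity hypothesis, but its proof (pure Gale duality) does not use it, so your argument goes through unconditionally just as the paper intends.
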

\begin{proof}
See Lemma~\ref{lem:dual}.
\end{proof}

The main result of this section is the following
\begin{proposition}
\label{prop:gstConst}
The Gstressable frameworks form a real
irreducible constructible set.
\end{proposition}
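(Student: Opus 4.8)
The plan is to realize the Gstressable frameworks as the image of an incidence correspondence over $\GST$, and then use the irreducibility of $\GST$ from Theorem~\ref{thm:gstr} together with the fact that the image of an irreducible variety under a polynomial map is irreducible (and constructible by Chevalley's theorem, working over $\CC$ and then taking real loci). Concretely, by Lemma~\ref{lem:gsable} a framework $(G,\p)$ is Gstressable precisely when $\p$ has $d$-dimensional affine span and there exists $\Omega \in \GST$ with $\Omega \hat P = 0$, where $\hat P$ has rows $\hat\p_i^t$. So the first step is to set up the incidence set
\[
    Z = \{ (\Omega, \p) : \Omega \in \GST,\ \p \in \RR^{dn},\ \Omega \hat P = 0,\ \p \text{ affinely spans } \RR^d \},
\]
and observe that the Gstressable frameworks are exactly the projection $\pi_2(Z)$ onto the $\p$-coordinates.

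The key observation making this work is that the fibre of the projection $\pi_1 : Z \to \GST$ over a fixed $\Omega \in \GST$ is easy to understand: since $\Omega$ has rank exactly $n-d-1$ and the all-ones vector lies in its kernel, the kernel of $\Omega$ is a $(d+1)$-dimensional space containing $(1,\dots,1)^t$, and by Lemma~\ref{lem:dual} every affinely spanning $\p$ in this kernel is automatically in general position. Thus the fibre $\pi_1^{-1}(\Omega)$ is an open dense subset (the affinely spanning locus) of a fixed vector space of dimension $d(d+1)$ — the choice of $d$ coordinate vectors completing $(1,\dots,1)^t$ to a basis of $\ker \Omega$, modulo nothing. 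In particular $\pi_1$ is a dominant map to $\GST$ with irreducible fibres of constant dimension, so $Z$ is irreducible; alternatively one can exhibit $Z$ directly as a Zariski-open subset of a vector bundle over $\GST$ (the bundle whose fibre over $\Omega$ is $\ker\Omega \subset \RR^n$, tensored up to $\RR^d$), which is irreducible because its base is. Then $\pi_2(Z)$ is the continuous/polynomial image of an irreducible set, hence irreducible, and it is constructible by Chevalley (applied to the complexification, then intersected with $\RR^{dn}$, noting that the real points of an irreducible complex variety that contains a real point — and it does, e.g.\ from $\PGST$ and Theorem~\ref{thm:alf} — are Zariski-dense in the complex variety, so irreducibility transfers).

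The main technical care is in the algebraic bookkeeping of the incidence set: I would want $Z$ to be genuinely quasi-projective (cut out by equalities and $\neq$'s), which it is, since "$\Omega \in \GST$" is quasi-projective by Theorem~\ref{thm:gstr}, "$\Omega \hat P = 0$" is a system of polynomial equalities in the entries of $\Omega$ and $\p$, and "$\p$ affinely spans $\RR^d$" is the non-vanishing of some $(d+1)\times(d+1)$ minor of $\hat P$. The one place where I must be slightly careful is the passage between $\RR$ and $\CC$ for Chevalley's theorem and for the irreducibility-of-the-image statement: I would phrase everything over $\CC$ using the complex analogues already built in the paper (CLSS, CGORs), take $Z_\CC$ the complex incidence set, note $Z_\CC$ is irreducible by the fibre-bundle argument over the irreducible base $\mathrm{CLSS}$, conclude $\pi_2(Z_\CC)$ is irreducible and constructible, and finally identify its real locus with the Gstressable set, using that the real points are Zariski-dense. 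I expect this $\RR$-versus-$\CC$ descent to be the only real subtlety; the rest is routine once the incidence correspondence is written down, and the fibre-dimension computation ($\dim Z = \dim \GST + d(d+1) = m - \binom{d+1}{2} + d(d+1)$, though the dimension is not asserted in the Proposition) is a sanity check rather than a needed step.
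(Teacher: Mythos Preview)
Your overall architecture matches the paper's: build an incidence object over $\GST$, project to frameworks, and use $\CC$ only to upgrade ``semi-algebraic'' to ``constructible'' via Chevalley. The paper phrases the incidence object as a rational map from $\GST$ to pinned kernel frameworks (fixing $d+1$ vertices to a canonical simplex) and then takes the affine orbit, but this is equivalent to your bundle $Z$; your presentation is arguably cleaner. Irreducibility is fine: over $\RR$, $Z$ is Zariski-open in a vector bundle over the irreducible $\GST$, so $\pi_2(Z)$ is irreducible as the image of an irreducible set---no complexification needed for that part, and the paper says as much in the paragraph preceding its proof.

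There is, however, a genuine gap in your descent from $\CC$ to $\RR$ for the constructibility half. The real locus of $\pi_2(Z_\CC)$ is the set of \emph{real} frameworks $\p$ admitting a \emph{complex} general-position stress $\Omega$ of rank $n-d-1$. You assert this equals the Gstressable set, but a priori such a real $\p$ might have only complex stresses of this rank and no real one. The paper isolates exactly this issue in a separate lemma: since the rigidity matrix $R(\p)$ is real, the complex stress space of a real $\p$ is the complexification of the real stress space, so a generic real stress already has the maximal rank $n-d-1$. You need this argument (or an equivalent one) and do not supply it.

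Separately, your parenthetical claim that ``the real points of an irreducible complex variety that contains a real point are Zariski-dense in the complex variety'' is false: the quadric $x^2+y^2+z^2=0$ in $\CC^3$ is irreducible of complex dimension $2$ and contains the real point $0$, but its real locus is $\{0\}$. The correct tool here is the paper's Lemma~\ref{lem:locus}: the real locus of an irreducible complex quasi-projective variety is irreducible provided its real dimension equals the complex dimension. But as noted above, you don't actually need the complex route for irreducibility at all; the false claim can simply be deleted once you handle irreducibility over $\RR$ and use the complex passage only for constructibility.
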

The basic idea is to create a rational map
taking each stress in $\GST$ to a
kernel framework.
To make this map well defined, we mod out by affine transforms.
In particular we  select $d+1$ vertices
and pin them to canonical locations. Then for each stress, we can
set up a linear system which will determine a   kernel
framework.
This linear system will be non-singular, unless the stress $\Omega$
is of rank $< n-d-1$, or when the kernel framework of $\Omega$
has these selected vertices with deficient span.
Neither of these can happen for a Gstress.

Using this approach, we can immediately conclude that
the Gstressable frameworks,
as the image of an irreducible semi-algebraic set under a rational map,
is irreducible
and is semi-algebraic. But with a bit more work, we can upgrade this conclusion
to real-constructible.
\begin{proof}
We will work over the complex general position stresses.
In the complex setting, we know that the image of
an irreducible constructible set under a rational map is
constructible.
In our case,
 the image, which we denote by 
$X$,  consists of complex  frameworks (in pinned position)  with complex general position stresses.

Next we enlarge
our set to include all non-singular affine
images of each framework in $X$.
To do this we build an irreducible  bundle over $X$ where
the points are $(\p,A)$ with $A$ a non-singular affine transformation.
Then we apply the polynomial map $(\p,A)\mapsto (A(\p_i))_{i=1}^n$.  Hence,
the affine orbit of $X$ is constructible and irreducible.

This gives us the irreducible constructible set $S$ of complex frameworks in general position with complex stresses of
rank $n-d-1$.

Now we take the real locus of $S$ the set of real frameworks
in general position with
with complex general stresses of rank $n-d-1$. This must  be a real constructible set.

Finally, we need to show that every framework
in this real locus also has a real
stress of rank $n-d-1$. This is done
in the next lemma, which completes the proof.
\end{proof}

\begin{lemma}
Let $S$ be the set of complex frameworks in general position
with complex stresses of rank $n-d-1$.
Denote by $\operatorname{Re}(S)$ the real locus of $S$.
Then $\operatorname{Re}(S)$  is equal to the 
 set of real frameworks
in general position with a real stress of rank
$n-d-1$.
\end{lemma}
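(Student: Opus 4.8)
The plan is to prove both inclusions. The easy direction is that any real framework in general position with a real stress of rank $n-d-1$ lies in $\operatorname{Re}(S)$: a real stress is in particular a complex stress, and a real framework in general position is certainly a complex framework in general position, so such a framework is a real point of $S$. The substance is the reverse inclusion: given a real framework $(G,\p)$ (so $\p$ has real coordinates) that happens to lie in $S$, i.e. admits \emph{some} complex stress matrix $\Omega$ of rank $n-d-1$ with the right support and the all-ones vector in its kernel, we must produce a \emph{real} such stress matrix. Note we are already told $\p$ is in general affine position.

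The key observation is that the space of equilibrium stress \emph{vectors} of $(G,\p)$ is the cokernel of the rigidity matrix $R(\p)$, and since $\p$ is real, $R(\p)$ is a real matrix. Hence its cokernel is a \emph{real} linear subspace $W \subseteq \RR^{|E|}$ whose complexification $W_\CC$ is exactly the space of complex equilibrium stress vectors. Passing to stress matrices, the complex stress matrices of $(G,\p)$ form $W_\CC$ (under the linear isomorphism $\omega \mapsto \Omega$), and the real ones form $W$. So I would argue as follows: because $(G,\p)\in S$, there is a complex stress matrix of rank exactly $n-d-1$; equivalently, writing $\Omega_\omega$ for the stress matrix associated to $\omega \in W_\CC$, the polynomial condition ``some $(n-d-1)\times(n-d-1)$ minor of $\Omega_\omega$ is nonzero'' holds for some $\omega \in W_\CC$. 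But since $\p$ has full affine span, \emph{every} stress matrix of $(G,\p)$ has rank \emph{at most} $n-d-1$, so rank exactly $n-d-1$ is the generic (maximal) rank, and the locus where it fails is a proper Zariski-closed subset of the affine space $W_\CC$ defined over $\RR$. A proper real Zariski-closed subset of $W_\CC \cong \CC^{\dim W}$ cannot contain the real points $W$ (the real points are Zariski dense in the complexification), so there exists a real $\omega \in W$ with $\Omega_\omega$ of rank $n-d-1$. That real stress matrix, together with $\p$, witnesses membership in the target set.

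More carefully, to run the density argument one needs: if $f$ is a polynomial with real coefficients on $W_\CC \cong \CC^N$ that is not identically zero on $W_\CC$ (which is guaranteed because it is nonzero at the given complex point coming from membership in $S$), then $f$ is not identically zero on the real points $\RR^N$; this is the standard fact that $\RR^N$ is Zariski-dense in $\CC^N$. Apply this with $f$ being (a suitable sum of squares of, or simply one nonvanishing choice among) the maximal minors of $\Omega_\omega$ as a function of $\omega$. Then generic real $\omega \in W$ gives a real stress matrix of the maximal rank $n-d-1$. I should also double-check that ``general affine position of $\p$'' is preserved — it is, since $\p$ is unchanged — and that the support and kernel conditions are automatic: every element of $W$ gives a stress matrix with the correct zero pattern and all-ones vector in the kernel by construction of the map $\omega\mapsto\Omega$.

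The main obstacle is essentially bookkeeping rather than depth: one must be careful that the relevant algebraic set (the drop-rank locus) really is defined over $\RR$ and really is proper, and that the only subtle input — every stress matrix of a full-affine-span real framework has rank at most $n-d-1$ — is cited correctly from the Background section. Once those are in place, the Zariski-density of $\RR^N$ in $\CC^N$ finishes it. I would write the proof in roughly four sentences: (1) the $\subseteq$-hard direction setup, identifying complex stresses with $W_\CC$ for $W = \operatorname{coker} R(\p)$; (2) rank $\le n-d-1$ always, rank $= n-d-1$ on a nonempty Zariski-open real subvariety of $W_\CC$; (3) real density gives a real $\omega$ achieving it; (4) the trivial $\supseteq$ direction.
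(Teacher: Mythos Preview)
Your proposal is correct and follows essentially the same approach as the paper's proof: both identify the complex stress space as the complexification of the real cokernel of $R(\p)$, observe that rank $n-d-1$ is the maximal (hence generic) rank on this space, and then use Zariski density of the real points to find a real stress achieving it. The paper phrases the last step as ``a generic stress from the real stress space is also generic in the complex stress space,'' but this is exactly your density argument.
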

\begin{proof}
One direction is trivial: by definition every Gstressable framework has a 
stress of rank $n-d-1$ and so is in $\operatorname{Re}(S)$.

For the other direction, every framework in $\operatorname{Re}(S)$ is in
general position, so we just need to show a real stress of
rank $n-d-1$. Fixing a $\p \in \operatorname{Re}(S)$, its complex stress space
is just the co-kernel of its rigidity matrix. Since the
rigidity matrix is real valued, this complex space has a
basis of real vectors. This makes the complex stress space
equal to the complexification of the real stress space.
This, in turn, implies that a generic stress from the real
stress space is also generic in the complex stress space.
Since every generic stress in the complex stress space has rank
$n-d-1$, we are done.
\end{proof}

\section{Locally Spanning}
General position is a strong restriction on a framework or a GOR.  In 
this section, we relax general position to a condition on vertex 
neighborhoods that is enough to get results similar to those in 
Section \ref{sec:gstressable}.

\begin{definition}
Let $G$ be a graph with $n \ge d+2$
vertices.
  We define
  the \defn{Fstressable} frameworks to be the set of
  $d$-dimensional frameworks $(G,\p)$ such that
  each vertex neighborhood%
  \footnote{This includes the points corresponding to a vertex and its 
  neighbors.}
  in $\p$ has a full $d$-dimensional affine span and such that
  $\p$ has
  an equilibrium stress of rank $n-d-1$.
\end{definition}

Our main results in this section will be the
following, which say, informally, that the Fstressable 
frameworks can be approximated by Gstressable ones.

\begin{theorem}
\label{thm:fapprox2}
The set of Gstressable frameworks is standard-topology dense in the 
set of Fstressable frameworks.
\end{theorem}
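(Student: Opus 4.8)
The plan is to start from an Fstressable framework $(G,\p)$ and perturb it slightly to obtain a Gstressable one. The key point is that ``Fstressable'' already guarantees that the rigidity matrix $R(\p)$ has a nontrivial cokernel containing a stress $\omega$ whose matrix $\Omega$ has rank exactly $n-d-1$. The only thing that can fail for $(G,\p)$ to be Gstressable is the \emph{global} general position of the points $\p$: some subset of at most $d+1$ of them may lie on a lower-dimensional affine flat, equivalently (via Lemma~\ref{lem:dual}) some subset of $n-d-1$ columns of $\Omega$ may be linearly dependent. By hypothesis the \emph{local} span condition holds, so each vertex and its neighbors already affinely span $\RR^d$; we want to promote this to global general position while keeping a rank-$(n-d-1)$ stress.

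The heart of the argument is a dimension count showing that Gstressable frameworks are Zariski dense inside a component of Fstressable ones, so that the bad locus is a proper closed subset and can be avoided by arbitrarily small perturbations. Concretely, I would first argue that it suffices to prove density near a \emph{fixed} $\p$: pick the stress $\Omega$ of rank $n-d-1$ that $\p$ supports, and consider its kernel frameworks. Since $\Omega$ has corank $d+1$ and $\p$ (with homogeneous coordinates) lies in the kernel, the affine space of kernel frameworks of $\Omega$ is exactly the orbit of $\p$ under affine transformations, which is $\binom{d+1}{2}+d(d+1)$-dimensional (translations, linear maps). Now I would perturb $\Omega$ itself within $\ST$: the set $\ST$ is a quasi-projective variety, and the subset $\GST$ is, by Theorem~\ref{thm:gstr}, an irreducible quasi-projective variety of dimension $m-\binom{d+1}{2}$ which is dense in its Zariski closure inside $\ST$. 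The key sub-claim is that $\Omega$ itself lies in this Zariski closure of $\GST$: indeed, since $(G,\p)$ has the local-span property, $\Omega$ is a rank-$(n-d-1)$ stress matrix that is a limit of Gstresses (one can wiggle the Takagi/complex-GOR factorization of $\Omega$ to make its columns generic while staying in $\ST$; the local span condition is exactly what keeps the factorization from degenerating in a way that blocks this). Hence there is a sequence $\Omega_k \to \Omega$ with $\Omega_k \in \GST$.

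Once we have $\Omega_k \to \Omega$ with $\Omega_k\in\GST$, we need to produce Gstressable frameworks $\p_k \to \p$. Each $\Omega_k$ is a Gstress, so by Lemma~\ref{lem:dual} its kernel frameworks are in affine general position, i.e. Gstressable. The remaining task is a continuity/implicit-function argument: the map sending a rank-$(n-d-1)$ stress matrix to (a representative of) its $d$-dimensional kernel framework is continuous where it is defined, because the kernel varies continuously when the rank is locally constant. Pinning $d+1$ affinely independent vertices of $\p$ to fixed positions (using that their span is full, guaranteed by the local-span hypothesis at any one of them) makes this a well-defined continuous map on a neighborhood of $\Omega$ in the rank-$(n-d-1)$ locus, so the kernel framework of $\Omega_k$ in pinned position converges to that of $\Omega$, which is an affine image of $\p$; composing with the converging affine transformations recovers a sequence $\p_k\to\p$ with each $(G,\p_k)$ Gstressable. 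This establishes standard-topology density.

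The main obstacle I anticipate is the sub-claim that every Fstressable $\Omega$ lies in the Euclidean (equivalently Zariski) closure of $\GST$ inside $\ST$ — i.e. that the local-span condition is genuinely sufficient to perturb the columns of $\Omega$ into general position without leaving $\ST$ or raising the rank. This is where the distinction between Fstressable and merely ``rank $n-d-1$ stress'' is used, and it likely requires revisiting the complex-GOR factorization from Lemma~\ref{lem:gstr1}: one takes the centered COR $\v$ underlying $\Omega$ and perturbs it within the (irreducible) variety of centered CORs toward a CGOR, using that the non-neighbor orthogonality constraints at each vertex, together with the local full-span condition, leave enough freedom. Verifying that the perturbation can be taken with the pinned vertices fixed, and that it descends to a real perturbation, is the technical core; everything else is continuity bookkeeping.
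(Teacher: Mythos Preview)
Your high-level strategy matches the paper's exactly: show that any $\Omega\in\FST$ can be approximated in the standard topology by elements of $\GST$ (this is the paper's Theorem~\ref{thm:fapprox}), then pin $d+1$ affinely spanning vertices and use the continuous kernel-framework map on a neighborhood where the rank is constant to push the stress approximation down to frameworks. The continuity bookkeeping you sketch is essentially what the paper does.

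The substantive divergence is in how you propose to prove the key density step. You suggest taking the complex Takagi factorization $\Omega=X^tX$ and perturbing the resulting centered COR toward a centered CGOR inside the complex variety. The paper instead factors the \emph{real} $\Omega$ as $X^tSX$ with $X$ real and $S$ a diagonal $\pm 1$ signature matrix, obtaining a real centered pseudo-Euclidean FOR (Lemma~\ref{lem: fst iff flss}). It then invokes the Lov\'asz--Saks--Schrijver density argument, which goes through verbatim in every pseudo-Euclidean signature (Lemma~\ref{lem:lgor}), to approximate by a real GOR $\v''$; finally it recenters by projecting the all-ones vector onto $\ker V''$ and rescaling each $\v''_i$ by the resulting coefficient, producing a real centered GOR whose Gram matrix is a real Gstress near $\Omega$.

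Your complex route hits exactly the obstacle you flag but do not resolve: perturbing a complex COR toward a CGOR gives a nearby point of CLSS, but there is no mechanism forcing that point onto the real locus, so you do not get a real Gstress. The pseudo-Euclidean factorization is precisely the device that keeps everything real throughout and makes the FOR condition (equivalently, the local-span condition on the framework side) interface directly with the LSS density result. A secondary point you elide is that the LSS density is for GORs in FORs, not for \emph{centered} GORs in centered FORs; the paper handles the recentering explicitly and it costs a small argument to show the rescaling factors stay nonzero and close to $1$.
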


\begin{theorem}
\label{thm:fapprox3}
Fstressable is a contructible set. It is irreducible and has the 
same dimension as Gstressable.
\end{theorem}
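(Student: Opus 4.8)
The plan is to leverage Theorem~\ref{thm:fapprox2} together with the already-established structure of the Gstressable set (Proposition~\ref{prop:gstConst}), since ``constructible, irreducible, of a given dimension'' is a property that behaves well under passing to a standard-topology (hence Zariski) closure. First I would recall that by Proposition~\ref{prop:gstConst} the Gstressable set, call it $\mathcal{G}$, is an irreducible real constructible set; let $Y$ denote its Zariski closure, which is then an irreducible real algebraic variety with $\dim Y = \dim \mathcal{G}$. By Theorem~\ref{thm:fapprox2}, the Fstressable set $\mathcal{F}$ is contained in the standard-topology closure of $\mathcal{G}$, and since the Zariski closure contains the Euclidean closure, we get $\mathcal{G} \subseteq \mathcal{F} \subseteq Y$. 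Because $\mathcal{G}$ is dense in $Y$ and $Y$ is irreducible, any set sandwiched between $\mathcal{G}$ and $Y$ is automatically irreducible with Zariski closure exactly $Y$; in particular $\mathcal{F}$ is irreducible and $\dim \mathcal{F} = \dim Y = \dim \mathcal{G}$. This settles the irreducibility and dimension claims, leaving only constructibility to verify.

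For constructibility, the point is to give an explicit description of $\mathcal{F}$ as a finite Boolean combination of algebraic conditions. I would write $\mathcal{F}$ as the set of $\p \in \RR^{dn}$ such that (i) for every vertex $i$, the points $\{\p_j : j \in \{i\}\cup N(i)\}$ have full affine span --- this is the non-vanishing of at least one $(d{+}1)\times(d{+}1)$ minor of the corresponding homogeneous-coordinate matrix, hence a finite union of Zariski-open conditions --- and (ii) the rigidity matrix $R(\p)$ has corank at least $n-d$ as a stress space, equivalently $R(\p)$ has rank at most $m-(n-d)$, which is the simultaneous vanishing of all minors of that size, a closed algebraic condition. The conjunction of a constructible (in fact locally closed, after taking the union over which vertex-neighborhood minor is nonzero) set with a closed set is constructible. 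So $\mathcal{F}$ is a finite union of sets of the form (open condition) $\cap$ (closed condition), hence constructible. One should double-check here that having a stress of rank \emph{exactly} $n-d-1$ (rather than at most) is what is wanted; but since $\p$ has full affine span, its homogeneous configuration has rank $d+1$, so any stress automatically has rank at most $n-d-1$, and ``rank exactly $n-d-1$'' is equivalent to ``rank at least $n-d-1$'', i.e. the existence of some stress whose matrix has a nonvanishing $(n-d-1)$-minor --- again a constructible condition, and in any case the rank-$\le n-d-1$ bound is free. This is the step where I expect to spend the most care: pinning down exactly which minors encode ``$\p$ carries a stress matrix of rank $n-d-1$'' versus ``$R(\p)$ has nontrivial cokernel,'' and checking these agree under the full-span hypothesis so that the description is genuinely a finite Boolean combination.

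The main obstacle, then, is not the irreducibility or dimension --- those follow formally from Theorem~\ref{thm:fapprox2} and Proposition~\ref{prop:gstConst} by the closure argument above --- but rather making the constructibility argument airtight: ensuring that the ``each vertex neighborhood has full affine span'' condition is phrased as a genuine constructible (locally closed) set, and that it interacts correctly with the rank condition on $R(\p)$. An alternative, perhaps cleaner route for constructibility would be to mirror the proof of Proposition~\ref{prop:gstConst}: work over $\CC$, take the incidence variety of pairs (framework, complex stress of rank $n-d-1$) intersected with the locally-closed locus where all vertex neighborhoods span, project to the framework coordinates (image of a constructible set under a polynomial map is constructible by Chevalley), take the real locus, and finally invoke the same complexification-of-the-stress-space argument used in the last lemma of Section~\ref{sec:gstressable} to see that a real framework in this locus has a \emph{real} stress of rank $n-d-1$. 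Either way the combinatorics of the minors is the only real work; the rest is bookkeeping.
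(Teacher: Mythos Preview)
Your argument for irreducibility and dimension is correct and matches the paper exactly: both deduce these from Theorem~\ref{thm:fapprox2} (density of Gstressable in Fstressable) together with the irreducibility of Gstressable established earlier.

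For constructibility, your first approach has a genuine gap. The condition ``$(G,\p)$ has an equilibrium stress $\omega$ whose stress matrix $\Omega$ has rank $n-d-1$'' is \emph{not} a rank condition on $R(\p)$ alone: the cokernel of $R(\p)$ may well be one-dimensional and yet its single stress have $\Omega$ of rank strictly less than $n-d-1$. Once you correct this to the existential formulation you sketch (``there exists $\omega$ in the cokernel with some $(n{-}d{-}1)$-minor of $\Omega$ nonzero''), you face the problem that over $\RR$ the projection of a constructible set is only semi-algebraic, not constructible---Tarski--Seidenberg does not give you Chevalley's theorem. So the direct description in terms of minors of $R(\p)$ does not, on its face, yield a constructible set.

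Your alternative route---complexify, form the incidence variety, project via Chevalley, take the real locus, and invoke the real/complex stress-space lemma from Section~\ref{sec:gstressable}---is correct and is essentially what the paper does. The paper's variant parameterizes slightly differently: rather than an incidence variety of (framework, stress) pairs, it uses the complexified $\FST$ as the domain of a rational ``kernel framework'' map, as in the proof of Proposition~\ref{prop:gstConst}. The wrinkle the paper emphasizes, and which you should also confront, is that without global general position there is no single choice of $d+1$ pinned vertices that works everywhere on $\FST$; one must take a finite cover by different pinning choices and verify that the union of the images recovers all of Fstressable. In your incidence-variety formulation this issue is absorbed into the projection step, so your version is arguably cleaner once you commit to it---but be explicit that ``real locus of a complex constructible set is real constructible'' is doing real work here.
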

To this end we start with the next definition. It may seem a bit
unnatural, but it will prove to be exactly what we need when we dualize
and look at frameworks.

\begin{definition}\label{def: lgor}
  A  \defn{local full spanning orthogonal representation} (for short, a FOR)
  $\v$ of $G$
in $\RR^D$
is an OR in $\RR^D$ with the added following property:
For each vertex $i\in V(G)$, there are neighbors $i_1, \ldots, i_{n-D-1}$
of $i$ so that the set of vectors
\[
    \{\v_j : j\in \{1, \ldots, n\} \setminus \{i, i_1, \ldots, i_{n-D-1}\}\}
\]
are linearly independent.

Any FOR necessarily spans $\RR^D$.
\end{definition}
We point out for later that any FOR has the property that the non-neighbors
of any vertex are represented by linearly independent vectors.

\begin{lemma}
\label{lem:gorfor}
Every GOR is a FOR.
\end{lemma}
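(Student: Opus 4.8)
The plan is to unpack both definitions and show that the defining property of a GOR directly supplies the data a FOR requires. Let $\v$ be a GOR of $G$ in $\RR^D$, where in our setting $D = n - d - 1$. By definition $\v$ is an OR, so the orthogonality condition on non-neighbors is automatically satisfied; the only thing to verify is the extra ``local full spanning'' clause of Definition~\ref{def: lgor}: for each vertex $i$, we must exhibit neighbors $i_1, \ldots, i_{n-D-1}$ of $i$ such that the complementary set of $D$ vectors $\{\v_j : j \notin \{i, i_1, \ldots, i_{n-D-1}\}\}$ is linearly independent.

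The key observation is that since $\v$ is a GOR, \emph{every} subset of $D$ of its vectors is linearly independent (general linear position in $\RR^D$). So for \emph{any} choice of $n - D - 1$ vertices to delete alongside $i$ --- equivalently, any choice of which $D$ vectors to keep --- the retained set is automatically independent. Thus the only real content is a counting/combinatorial point: we need the $n - D - 1 = d$ deleted vertices $i_1, \ldots, i_{n-D-1}$ (besides $i$ itself) to all be \emph{neighbors} of $i$. Equivalently, vertex $i$ must have at least $n - D - 1 = d$ neighbors, i.e. degree at least $d$. This is where $(d+1)$-connectivity of $G$ enters: in a $(d+1)$-connected graph on $n \ge d+2$ vertices every vertex has degree at least $d+1 > d$, so we may pick any $d$ of $i$'s neighbors as $i_1, \ldots, i_{n-D-1}$, and then Lemma~\ref{lem:linear-gale} is not even needed --- general position of $\v$ finishes the job immediately.

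So the proof is essentially two sentences: (1) a GOR is an OR, so orthogonality holds; (2) for each $i$, choose $d$ neighbors of $i$ arbitrarily (possible since $\deg(i) \ge d+1$ by $(d+1)$-connectivity), delete them together with $i$, and note the remaining $D$ vectors are independent because a GOR is in general linear position. I would write it in that order. The only subtlety worth a careful word is making sure that $n - D - 1 = d$ under our standing convention $D = n - d - 1$, and confirming that the ambient graph is indeed assumed $(d+1)$-connected wherever FORs are discussed (it is, since otherwise $\GST$ and all related sets are empty).

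I do not expect a genuine obstacle here: the lemma is a definitional inclusion, and the ``hard part'' is merely bookkeeping --- tracking the index shift between ``delete $n-D-1$ neighbors plus $i$'' and ``retain $D$ vectors'', and invoking the degree bound from connectivity rather than from any rigidity-theoretic input. If one wanted to state the lemma for GORs in an arbitrary dimension $D$ without the connectivity hypothesis, the claim would fail for low-degree vertices, so I would make sure the statement is read in the paper's standing context where $G$ is $(d+1)$-connected and $D = n-d-1$; under that reading the proof is immediate.
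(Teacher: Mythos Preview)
Your proposal is correct and follows the same line as the paper's proof: both reduce the FOR condition to ``any $D$ of the $\v_j$ are independent,'' which is immediate from general position. The paper's proof is a two-sentence version of yours and does not spell out the degree check you highlight.

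One small sharpening: you treat $(d+1)$-connectivity as an ambient hypothesis that must be imported from context, but in fact it is \emph{automatic} once a GOR exists. By Theorem~\ref{thm:lss}, if $G$ has a GOR in $\RR^D$ then $G$ is $(n-D)$-connected, hence every vertex has degree at least $n-D > n-D-1$, and the required $n-D-1$ neighbors of $i$ can always be chosen. So the lemma is true as stated for arbitrary $D$, with no side hypothesis needed; the paper's terse proof silently relies on this implication, and your more careful accounting is a legitimate way to make it explicit.
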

\begin{proof}
In a GOR, the configuration  $\v$
is in general position. Thus any
$D$ of the $\v_j$ will be linearly
independent, ensuring we can
satisfy the FOR condition.
\end{proof}

The definition of an FOR relaxes that of a GOR, but not
so
much that Theorem \ref{thm:alf} becomes false.  Here is the
first part of it for FORs.
\begin{lemma}\label{lem: lgor center}
Any  FOR has a full rank centering map.
\end{lemma}
\begin{proof}
The main step of Alfakih's proof that every GOR has a
full rank centering map in \cite{Alfakih-conn} is that
in a GOR, for each $\v_i$, there is a linear dependence among
the vectors in $\v$ with a non-zero coefficient on $\v_i$.
This holds for a FOR because, from the definition,
associated with each $\v_i$ is a set of $D$ vectors
$\v_j$  which span the $D$-dimensional space.
The rest of the proof in \cite{Alfakih-conn} goes through unmodified.
\end{proof}

If we require only that the non-neighbors of every vertex are linearly
independent, then the conclusion of Lemma \ref{lem: lgor center} becomes
false, and also, the span of such an OR may be less than $D$-dimensional.
We need to rule out both of these cases for our intended application.

We also want to generalize the signature of our metric.
\begin{definition}
  We pick of a set of $D$ signs $s_i=\pm 1$. The choice of signs gives us a  symmetric
  bilinear form
 \ba
 \la x, y \ra := \sum_{i=1}^D s_i x_i y_i
 \ea
 With the signature fixed, we say that we are in a \defn{pseudo-Euclidean} setting.
 In this setting, we call two vectors $x$ and $y$ orthogonal if
 $\la x,y\ra = 0$.

 In any pseudo-Euclidean setting we can define the same notion of an OR, GOR, and FOR.
\end{definition}

Results and proofs from \cite{Lovasz-Schrijver} imply that the set
of FORs has nice algebraic and geometric properties.
\begin{lemma}
\label{lem:lgor}
In each of the the real, or pseudo-Euclidean settings,
the set of FORs is irreducible and the set of GORs is
(standard topology) dense in the FORs. If $G$ is
not $(n-D)$-connected, then the set of FORs is empty.
\end{lemma}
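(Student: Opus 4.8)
The plan is to bootstrap everything off of Theorem~\ref{thm:lss} (equivalently, Theorem~\ref{thm:cgor-dim} in the pseudo-Euclidean setting, whose proof is the same), which already gives us that the set of GORs is irreducible and nonempty exactly when $G$ is $(n-D)$-connected. Since Lemma~\ref{lem:gorfor} tells us every GOR is a FOR, the set of GORs is a subset of the set of FORs, and the set of FORs is itself a constructible (indeed quasi-projective) set because the FOR condition is a finite disjunction, over choices of $n-D-1$ neighbors at each vertex, of the open condition that a certain complementary $(n-D)\times(n-D)$ minor of the configuration matrix is nonzero, intersected with the algebraic OR condition. So the whole lemma reduces to one statement: the set of GORs is standard-topology dense in the set of FORs. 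Density in the standard topology, for a subset of a constructible set, follows once we show Zariski density together with the fact that GORs form a Zariski-open (hence, inside the irreducible FOR variety, either empty or dense) subset — and it is nonempty since at least one GOR exists. Thus the real crux is: \emph{the set of FORs is irreducible}, and \emph{the GORs are Zariski-dense in it}.

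The key steps, in order: (1) Reduce the emptiness claim to connectivity. If $G$ is not $(n-D)$-connected, we have noted (just after Definition~\ref{def: lgor}) that any FOR has the property that the non-neighbors of every vertex are linearly independent; but the argument in the proof of Theorem~\ref{thm:cgor-dim}, in the low-connectivity case, derives a contradiction from exactly this property (it produces a vertex in $X_1$ or $X_2$ whose non-neighbors are forced to be linearly dependent, or span too small a space). That argument is purely about the bilinear form and linear independence, so it applies verbatim to FORs in the real and pseudo-Euclidean settings. Hence no FOR exists when $G$ is not $(n-D)$-connected. (2) Assume now $G$ is $(n-D)$-connected, so by Theorem~\ref{thm:lss} the GOR set is nonempty and irreducible. (3) Show the GOR set is Zariski-open inside the FOR set: a FOR is a GOR precisely when \emph{every} $D$-subset of the $\v_i$ is independent, which is an open condition (nonvanishing of finitely many minors). (4) Show the FOR set is irreducible. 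The natural way is to exhibit it as contained in the Zariski closure of the GOR set: take any FOR $\v$ and perturb it along the OR variety toward general position. Concretely, the OR variety near $\v$ is cut out by the bilinear orthogonality equations; one shows that a generic point of the irreducible component of the OR variety through $\v$ is in general position, using that $\v$ already satisfies the FOR condition (so the relevant minors are not identically zero on that component). This identifies the closure of the GOR set with a union of components of the OR variety, and the FOR set sits inside that union; since the FOR set is an open subset of this and contains the dense GOR set, irreducibility of the GOR set forces irreducibility of the FOR set.

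The step I expect to be the main obstacle is (4), specifically showing that every FOR lies in the Zariski closure of the GORs rather than on some ``extra'' component of the OR variety that never meets general position. The FOR condition was designed (as the authors hint) to be exactly the local nondegeneracy needed to run Alfakih's centering argument (Lemma~\ref{lem: lgor center}), but for the density statement what we actually need is that the FOR condition certifies membership in the ``good'' component: because for each vertex $i$ the chosen $D$ neighbor-complement vectors are independent, the configuration matrix has full rank $D$, so $\v$ is not on any component of the OR variety that is uniformly rank-deficient; and one then argues, following the structure of the Lov\'asz--Saks--Schrijver proof of \cite[Theorem 2.1]{Lovasz-Schrijver}, that the full-rank locus of the OR variety is irreducible with GORs dense in it. If that irreducibility of the full-rank OR locus is not directly quotable from \cite{Lovasz-Schrijver}, the fallback is to run their dimension-count and connectivity induction again with ``general position'' weakened to ``FOR,'' checking that the induction only ever uses the FOR-type independence at a single vertex at a time, which is exactly what is available. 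The remaining passage from Zariski density to standard-topology density is routine: GORs form a nonempty Zariski-open subset of the irreducible FOR variety, hence a dense open subset in the standard topology on the real (or pseudo-Euclidean real) points as well, by the usual fact that a nonempty Euclidean-open subset of an irreducible real variety's smooth locus is Euclidean-dense in it, combined with the fact that the FOR set is contained in the smooth locus of the GOR closure away from a lower-dimensional set.
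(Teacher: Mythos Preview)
Your outline is in the right spirit, but it is considerably more elaborate than the paper's argument, and one step in it is not as routine as you claim.

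The paper's proof is a two-line sandwich. It cites directly from \cite{Lovasz-Schrijver} (the remark on page 448, a consequence of their proof of Theorem 2.1) that the GORs are standard-topology dense in the \emph{larger} class of ORs in which the non-neighbors of every vertex are represented by linearly independent vectors. Since every FOR has this non-neighbor-independence property and every GOR is a FOR (Lemma~\ref{lem:gorfor}), the FORs are sandwiched: $\GOR \subseteq \FOR \subseteq \{\text{ORs with independent non-neighbors}\}$. Density of GORs in the outer set immediately gives density in FORs, and irreducibility of FORs then follows because FOR sits between GOR and its Zariski closure. The pseudo-Euclidean case is dispatched by remarking that the LSS proof goes through with sign changes. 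The low-connectivity emptiness is handled exactly as you propose, via the argument of Theorem~\ref{thm:cgor-dim}.

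What you miss is this sandwich: instead of proving directly that each FOR lies in the closure of the GORs, the paper invokes the stronger LSS density result for the bigger class and gets the FOR statement for free. Your step (4) is the LSS argument itself, so you are proposing to re-prove what the paper simply cites.

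More seriously, your final passage from Zariski density to standard-topology density is not routine over $\RR$. A nonempty Zariski-open subset of an irreducible real variety need not be Euclidean-dense in its real points (think of isolated real points on an irreducible curve such as $y^2 = x^2(x-1)$). Your appeal to smooth loci does not close this without further argument. The paper never faces this issue because the LSS result is already stated as standard-topology density; if you do re-run the LSS induction as your fallback, you should extract Euclidean density from it directly rather than detouring through Zariski density.
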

\begin{proof}
In the real setting, it is  explicitly stated in ~\cite[Remark
on page 448]{Lovasz-Schrijver} that the GORs are dense
in the class of ORs in which the non-neighbors every
vertex are represented by linearly independent vectors.
This follows from the proof of~\cite[Theorem 2.1]{Lovasz-Schrijver}.
We can apply the same proof for any pseudo-Euclidean
setting by an appropriate sprinkling of negative signs.

The FORs are a subset of this larger class.
The GORs are a subset of FORs from Lemma~\ref{lem:gorfor},
and hence
the
GORs are dense in FOR as well.

For the less connected case, Theorem~\ref{thm:cgor-dim} shows that
$G$ cannot have a FOR in dimension $D$, since a FOR implies
that the non-neighbors of every vertex are represented by linearly
independent vectors.
\end{proof}

In our terminology, we will set $D:=n-d-1$ where $d$ is fixed, and will consider 
graphs that are $(d+1)$-connected.  These graphs
have at least $d+2$ vertices, so $D \ge 1$.
With this notation, the theorem tells us that
 we need
$(d+1)$-connectivity to obtain FORs in dimension 
${n-d-1}$.

At this point, we need to describe how to obtain a
stress matrix from an FOR using Alfakih's construction
from \cite{Alfakih-conn}.
Given a centered FOR $\v$ in some real or pseudo-Euclidean setting
of $G$ in dimension
$n - d -1$, the
$(n - d - 1)\times n$ matrix $X$
with the $\v_i$ as its columns is the \defn{configuration
matrix} of $\v$.  Let $\Psi$ be the
matrix $X^tSX$, where $S$ is the diagonal matrix
representing the pseudo-Euclidean setting we are in.
Then $\Psi$ will have rank $n-d-1$, and its number
of positive and negative eigenvalues will be determined
their quantity in $S$. By the orthogonality
property of an FOR,
$\Psi$ will have
zeros corresponding to the non-edges of $G$. Because $\v$ is centered,
$\Psi$ is a stress matrix.

\begin{definition}
The set FLSS is the set of $n\times n$ stress matrices for a
graph $G$ that arise from applying the above construction to
a centered FOR in the real or some pseudo-Euclidean setting,
for a graph $G$ in dimension $d$.
\end{definition}
We are ready to define the analogue to $\GST$ in the FOR 
setting.
\begin{definition}
Let $G$ be a graph with $n \ge d+2$
vertices.
Let \defn{FST} be the real
quasi-projective variety of $n$-by-$n$
\defn{locally full spanning  $d$-dimensional stress matrices} for $G$.
Specifically, this is the set of real symmetric matrices $\Omega$
that have $0$ entries
corresponding to non-edges of $G$,
have the all-ones vector in the
kernel, rank $n - d -1$, and the following additional property:
For each vertex $i\in V(G)$, there are neighbors
$\{i_1, \ldots, i_d\}$ of $i$ so that the columns of $\Omega$
corresponding to the vertices in
\[
    V(G) \setminus \{i, i_1, \ldots, i_d\}
\]
are linearly independent.
\end{definition}
\begin{lemma}\label{lem: pseudo-euc gram}
Suppose that $\v_1, \ldots, \v_n$ are vectors that span $\RR^D$,
and let $\Psi$ be the Gram matrix of the $\v_i$ under any pseudo-Euclidean
bilinear form.  Then the columns of $\Psi$ corresponding to any spanning
subset of the $\v_i$ are independent.
\end{lemma}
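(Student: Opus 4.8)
The plan is to exhibit the relevant columns of $\Psi$ as a product of matrices whose ranks are transparent. Let $X$ be the $D\times n$ configuration matrix whose $i$-th column is $\v_i$, let $S$ be the diagonal $\pm 1$ matrix defining the pseudo-Euclidean form, and recall that $\Psi = X^tSX$, so that the $i$-th column of $\Psi$, which I will write as $\Psi_{\bullet,i}$, equals $X^tS\v_i$. Given an index set $T\subseteq\{1,\ldots,n\}$ for which $\{\v_i : i\in T\}$ spans $\RR^D$: since $\RR^D$ is $D$-dimensional this subset is in fact a basis, so $|T|=D$ and the submatrix $X_T$ of $X$ on the columns indexed by $T$ is invertible. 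The columns $\{\Psi_{\bullet,i} : i\in T\}$ are then exactly the columns of the $n\times D$ matrix $X^tSX_T$, so the lemma reduces to the claim that $X^tSX_T$ has rank $D$.

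To prove that claim I would use that $X$ has rank $D$ (its columns span $\RR^D$ by hypothesis), so that $X^t$ is an $n\times D$ matrix with linearly independent columns. Left multiplication by such a matrix preserves rank --- the same fact invoked just after Lemma~\ref{lem:sqrank} --- so $\Rank(X^tSX_T) = \Rank(SX_T)$, and since $S$ and $X_T$ are both invertible this equals $\Rank(X_T)=D$. Equivalently, and more hands-on: if $\sum_{i\in T} c_i\,\Psi_{\bullet,i} = 0$, i.e.\ $X^tSX_Tc = 0$ for some $c\in\RR^D$, set $\w := SX_Tc\in\RR^D$; then $\v_i^t\w = 0$ for every $i\in\{1,\ldots,n\}$, so the spanning hypothesis on $\v_1,\ldots,\v_n$ forces $\w=0$, invertibility of $S$ then gives $X_Tc=0$, and invertibility of $X_T$ gives $c=0$. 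Either way the $D$ columns of $\Psi$ picked out by $T$ are linearly independent.

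There is no genuinely hard step --- this is elementary linear algebra --- but two points warrant a sentence of care. First, ``spanning subset'' must be read as ``basis'': since $\Psi$ itself has rank exactly $D$ (apply the above with $T=\{1,\ldots,n\}$, or Lemma~\ref{lem:sqrank}), a subset of the $\v_i$ can yield independent columns of $\Psi$ only if it has exactly $D$ elements, which is precisely the situation in which the lemma is applied (in the definition of FST one removes $d+1$ vertices from the $n$, leaving $n-d-1=D$ columns). Second, the argument rests entirely on $S$ being invertible, i.e.\ on the bilinear form being nondegenerate; this holds automatically here because every sign $s_i$ is $\pm 1$, and it is exactly why we work in a pseudo-Euclidean rather than a possibly degenerate setting.
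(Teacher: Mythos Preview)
Your proof is correct and rests on the same two facts as the paper's --- that the sign matrix $S$ is invertible (nondegeneracy of the form) and that $X_T$ is invertible when $T$ indexes a spanning set of size $D$. The presentation differs slightly: the paper observes that the $D\times D$ submatrix $X_T^tSX_T$ is the matrix of the bilinear form in the basis $\{\v_i : i\in T\}$ and hence nonsingular, which immediately forces the corresponding full columns of $\Psi$ to be independent; you instead factor the full $n\times D$ column block as $X^tSX_T$ and track ranks. Both arguments are one line of linear algebra, and your explicit remark that ``spanning subset'' must mean ``basis of size $D$'' is a useful clarification that the paper leaves implicit.
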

\begin{proof}
Suppose that $B = \{\v_{i_1}, \ldots, \v_{i_D}\}$ are independent, and hence
span $\RR^D$.  Then the $D\times D$ Gram matrix $\Psi'$
of these vectors represents
the bilinear form in the basis $B$.  Since any pseudo-Euclidean form is non-degenerate 
(there is no vector orthogonal to all vectors), $\Psi'$
is non-singular.  Since $\Psi'$ appears as a sub-matrix of $\Psi$,
the associated columns are linearly independent in $\Psi$.
\end{proof}
As in the Gstress setting, the FStresses turn out to 
be exactly the ones arising from the FLSS construction.
\begin{lemma}\label{lem: fst iff flss}
The sets FLSS and FST are equal.
\end{lemma}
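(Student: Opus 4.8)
The plan is to mirror the argument for the Gstress setting (Lemma~\ref{lem:pgeq} and Lemma~\ref{lem:gstr1}), replacing general position by the local full-spanning condition and ordinary Gram matrices by pseudo-Euclidean ones. First I would show $\text{FLSS}\subseteq\text{FST}$. Take $\Omega\in\text{FLSS}$, arising as $\Omega=X^tSX$ for a centered FOR $\v$ with configuration matrix $X$ in some pseudo-Euclidean setting with diagonal sign matrix $S$. By the discussion preceding Definition~\ref{def: lgor} (an FOR spans $\RR^{n-d-1}$), together with Lemma~\ref{lem: pseudo-euc gram} applied with $D=n-d-1$, $\Omega$ has rank $n-d-1$, has zeros on non-edges (orthogonality of the FOR), and has the all-ones vector in its kernel (centeredness). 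For the local independence property, fix a vertex $i$; the FOR definition gives neighbors $i_1,\dots,i_{n-d-1}$ of $i$ so that the complementary set of $\v_j$ is linearly independent — but that complementary set has exactly $n-(n-d-1)=d+1$ vectors in $\RR^{n-d-1}$, so ``linearly independent'' there is the same as ``a spanning subset'' only when $d+1 = n-d-1$. In general I instead want to extract from the FOR condition a set of $d$ neighbors $\{i_1,\dots,i_d\}$ of $i$ such that the remaining $n-d-1$ columns $\v_j$ are a \emph{spanning} subset; this follows because the FOR condition already hands us $n-d-1$ neighbors whose complement is independent (hence spans), and we may shrink that list of deleted vertices. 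Once we have a spanning subset of $n-d-1$ of the $\v_j$, Lemma~\ref{lem: pseudo-euc gram} says the corresponding columns of $\Omega$ are independent, giving the FST property at $i$. Since $i$ was arbitrary, $\Omega\in\text{FST}$.

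For the reverse inclusion $\text{FST}\subseteq\text{FLSS}$, take $\Omega\in\text{FST}$; it is a real symmetric matrix of rank $n-d-1$ with zeros on non-edges and the all-ones vector in its kernel. The key step is to factor $\Omega=X^tSX$ with $X$ a real $(n-d-1)\times n$ matrix and $S$ a diagonal sign matrix recording the signature of $\Omega$: this is the real spectral/$LDL^t$-type factorization (take an orthonormal eigenbasis for the nonzero eigenvalues, absorb $|\lambda_k|^{1/2}$ into the rows, and let $S$ carry the signs). The columns $\v_i$ of $X$ then form a centered orthogonal representation of $G$ in the pseudo-Euclidean setting given by $S$: orthogonality follows from the zeros of $\Omega$ off the diagonal, and centeredness from the all-ones vector being in $\ker\Omega$. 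It remains to verify the FOR condition. Fix a vertex $i$; the FST property gives neighbors $i_1,\dots,i_d$ so that the $n-d-1$ columns of $\Omega$ outside $\{i,i_1,\dots,i_d\}$ are independent. Those columns are $X^tS$ applied to the corresponding columns of $X$; since $X^tS$ has a left inverse only when $X$ has rank $n-d-1$, I first note the rank of $\Omega$ forces $X$ to have full row rank $n-d-1$, and then independence of those $n-d-1$ columns of $\Omega$ forces independence of the corresponding $n-d-1$ columns $\v_j$ of $X$. This is exactly the FOR requirement at $i$ (with $n-D-1 = d$ deleted neighbors), so $\v$ is an FOR and $\Omega=X^tSX\in\text{FLSS}$.

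I expect the main obstacle to be the bookkeeping around the direction ``independent columns of $\Omega$ $\Rightarrow$ independent columns of $X$'' and its converse, i.e.\ getting the Gale-duality-style equivalence in the indefinite (pseudo-Euclidean) setting cleanly. The clean way is: since $X$ has full row rank, $\Omega = X^t(SX)$ where $SX$ also has full row rank, and for any index subset $J$ the columns $\Omega_J = X^t (SX)_J$; restricting to the row space of $X$ (dimension $n-d-1$), the map $v\mapsto X^t v$ is injective, so $\Omega_J$ is independent iff $(SX)_J$ is independent iff $X_J$ is independent (as $S$ is invertible). Lemma~\ref{lem: pseudo-euc gram} is essentially the ``only if'' half of this packaged for spanning sets, and the ``if'' half is the same linear algebra; I would either cite Lemma~\ref{lem:linear-gale} or just spell out this two-line argument. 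With that in hand both inclusions are immediate, completing the proof. \eop
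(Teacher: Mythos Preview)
Your approach is exactly the paper's: establish FLSS $\subseteq$ FST via Lemma~\ref{lem: pseudo-euc gram}, and FST $\subseteq$ FLSS by factoring $\Omega=X^tSX$ with $S$ recording the signature and reading off the FOR properties from the columns of $X$. The linear-algebra core (your last paragraph, that $\Omega_J$ is independent iff $X_J$ is, because $X^tS$ is injective on $\RR^{n-d-1}$) is exactly what is needed and is a bit more explicit than the paper's appeal to Lemma~\ref{lem: pseudo-euc gram}.

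There is, however, a genuine index slip in your FLSS $\subseteq$ FST paragraph that makes the argument as written incorrect. In Definition~\ref{def: lgor} the FOR condition deletes $n-D-1$ neighbors of $i$, and here $D=n-d-1$, so $n-D-1=d$, not $n-d-1$. Thus the FOR hands you exactly $d$ neighbors $i_1,\dots,i_d$ of $i$ and a complementary set of $n-d-1=D$ vectors in $\RR^D$; for such a set ``linearly independent'' and ``spanning'' coincide, so Lemma~\ref{lem: pseudo-euc gram} applies directly with no shrinking needed. Your sentence ``the FOR condition already hands us $n-d-1$ neighbors whose complement is independent (hence spans)'' is wrong on both counts under your misreading: the complement would then have only $d$ vectors, and $d$ independent vectors in $\RR^{n-d-1}$ need not span. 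You even compute $n-D-1=d$ correctly in the reverse inclusion, so once you propagate that to the first paragraph the whole confused passage about shrinking disappears and the proof goes through cleanly.
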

\begin{proof}
First we suppose that $\Omega$ is in FLSS.  All the
properties of FST except for independence of
complementary sets columns to a (subset of) each vertex
neighborhood follow in the same was as in the proof
of Lemma \ref{lem:pgeq}.  To establish the
last property, we use the defining properties of an
FOR and then Lemma \ref{lem: pseudo-euc gram} to show that
independence of spanning subsets of the FOR give
rise to independent columns in $\Omega$.

For the other direction, we can factor $\Omega$
as $X^tSX$, where $S$ is the diagonal $\pm 1$ matrix
giving the pseudo-Euclidean signature and $X$ is
$ n - d - 1\times n$ and real.  Lemma \ref{lem: pseudo-euc gram}
and the definition of a locally general position stress matrix
show that the vector configuration from the columns of $X$
have the properties of an FOR.
\end{proof}

At this point, we are ready to dualize back to frameworks. 
\begin{lemma}\label{lem: has FST stress iff Fstressable}
A framework is Fstressable  iff it has stress from $\FST$.
\end{lemma}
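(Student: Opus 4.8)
The plan is to prove the $\FST$ analogue of Lemma~\ref{lem:dual} and then deduce this lemma exactly as Lemma~\ref{lem:gsable} follows from Lemma~\ref{lem:dual}; the engine is Gale duality (Lemma~\ref{lem:linear-gale}). Concretely, I would first establish the dual statement: if $\p$ affinely spans $\RR^d$ and $\Omega$ is a rank $n-d-1$ equilibrium stress of $(G,\p)$, then $\Omega \in \FST$ if and only if every vertex neighborhood of $\p$ has full $d$-dimensional affine span. This is the real content; given it, the lemma is essentially immediate, since the defining conditions of an Fstressable framework are precisely that it carries a rank $n-d-1$ equilibrium stress and that all of its vertex neighborhoods span.

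For the dual statement, note that since $\p$ affinely spans $\RR^d$, the $n \times (d+1)$ matrix $\hat P$ whose $i$th row is $\hat\p_i^t$ has rank $d+1$; combined with $\Omega\hat P = 0$ and $\dim \ker\Omega = d+1$, its columns span $\ker\Omega$. Applying Lemma~\ref{lem:linear-gale} with $A = \Omega$ (of rank $n-(d+1)$) and $X = \hat P$, for any set $T$ of $d+1$ vertices the rows of $\hat P$ indexed by $T$ are linearly independent exactly when the $n-d-1$ columns of $\Omega$ indexed by $V(G)\setminus T$ are linearly independent. Now fix a vertex $i$ and take $T = \{i, i_1, \dots, i_d\}$ with $i_1, \dots, i_d$ neighbors of $i$: the left side says $\hat\p_i, \hat\p_{i_1}, \dots, \hat\p_{i_d}$ are linearly independent, i.e.\ $\p_i, \p_{i_1}, \dots, \p_{i_d}$ are affinely independent. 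Quantifying over choices of $d$ neighbors of $i$, the existence of such a $T$ making the right side hold is exactly the defining condition of $\FST$ at vertex $i$, while the existence of such a $T$ making the left side hold is exactly the statement that $\{\p_j - \p_i : j \sim i\}$ spans $\RR^d$, i.e.\ that the neighborhood of $i$ has full affine span. So the two conditions match vertex by vertex, and the dual statement follows. The forward implication of the lemma is then immediate, since an Fstressable framework has $\p$ (indeed every vertex neighborhood) affinely spanning $\RR^d$, so its rank $n-d-1$ stress lands in $\FST$.

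The step I expect to require the most care is the reverse implication: given that $(G,\p)$ has an equilibrium stress $\Omega \in \FST$, one wants to conclude that $(G,\p)$ is Fstressable. The rank requirement is free ($\Omega$ itself has rank $n-d-1$), but to feed $\Omega$ into the dual statement and read off that the vertex neighborhoods span, one needs $\p$ to affinely span $\RR^d$, so that the columns of $\hat P$ actually fill out $\ker\Omega$. This is the same place where the hypothesis ``full span'' enters the statement of Lemma~\ref{lem:gsable}, and it is genuinely needed: an $\FST$ matrix can be an equilibrium stress of a degenerate framework (for instance one whose points all coincide) without that framework being Fstressable. So in this direction I would carry along the standing assumption — as is done throughout for $\GST$ — that $\p$ has full affine span, i.e.\ work with kernel frameworks of $\Omega$; with that in hand the dual statement closes the argument, and the remaining bookkeeping (translating ``affinely independent tuple of $i$ and $d$ of its neighbors'' into ``$i$'s neighborhood spans'') is routine. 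An alternative to the Gale-duality route would be to go through the equality $\FST = \FLSS$ (Lemma~\ref{lem: fst iff flss}) and Lemma~\ref{lem: pseudo-euc gram}, but the direct duality argument above is shorter and parallels Lemma~\ref{lem:dual} verbatim.
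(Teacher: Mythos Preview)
Your proposal is correct and takes essentially the same approach as the paper: both directions are handled by Gale duality (Lemma~\ref{lem:linear-gale}) applied to $\Omega$ and $\hat P$, translating ``$d$ neighbors of $i$ together with $i$ affinely span'' into ``the complementary $n-d-1$ columns of $\Omega$ are independent.'' You also correctly flag the need for the affine-spanning hypothesis in the reverse direction, which the paper likewise inserts into its proof without having stated it in the lemma.
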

\begin{proof}
First suppose that $(G,\p)$ is a framework in dimension $d$ that 
is Fstressable.  Let $\Omega$ be a stress matrix for $(G,\p)$ 
of rank $n - d - 1$.  For each vertex $i$ of $G$, with 
neighbors $i_1, \ldots, i_k$, the points 
\[
  \p_i, \p_{i_1}, \ldots, \p_{i_k}
\]
are affinely spanning.  After relabeling, we may as well assume that 
\[
  \p_i, \p_{i_1}, \ldots, \p_{i_d}
\]
affinely span $\RR^d$.  By Gale duality, we conclude that the 
$n - d - 1$ columns of $\Omega$ corresponding to 
\[
  V(G)\setminus \{i, i_1, \ldots, i_d\}
\]
are linearly independent.  Hence $\Omega\in \FST$, as desired.

Now suppose that $(G,\p)$ is an affinely spanning framework in 
$\RR^d$ with a stress $\Omega\in \FST$.  By the definition 
of $\FST$, $(G,\p)$ has a stress of rank $n - d - 1$.  
Let a vertex $i\in V(G)$ be given.  By the definition of 
a locally full spanning stress, there are neighbors 
$i_1, \ldots, i_d$ of $i$ so that the columns of $\Omega$ 
corresponding to  
\[
  V(G)\setminus \{i, i_1, \ldots, i_d\}
\]
are linearly independent.  By Gale duality, the points 
\[
  \p_i, \p_{i_1}, \ldots, \p_{i_d}
\]
affinely span $\RR^d$.  As $i$ was arbitrary, $(G,\p)$ 
is Fstressable.
\end{proof}

And now we can show that, in a sense, Fstressable frameworks 
are degenerations of Gstressable frameworks.
\begin{theorem}
\label{thm:fapprox}
Let $\Omega\in \FST$ be a locally full spanning equilibrium stress of a 
graph $G$.  Then every standard topology neighborhood of $\Omega$ 
contains a Gstress.

As a consequence, $\FST$ is irreducible and of the same dimension
as $\GST$.
\end{theorem}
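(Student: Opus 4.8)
The statement to prove is Theorem~\ref{thm:fapprox}: if $\Omega \in \FST$ for a graph $G$, then every standard-topology neighborhood of $\Omega$ contains a Gstress, and consequently $\FST$ is irreducible and of the same dimension as $\GST$.

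The plan is to approximate $\Omega$ by Gram matrices of centered \emph{general position} orthogonal representations, exploiting that GORs are dense in FORs. First, by Lemma~\ref{lem: fst iff flss} we may write $\Omega = X^{t}SX$, where $S$ is the $\pm1$ diagonal matrix of some pseudo-Euclidean signature on $\RR^{n-d-1}$ and $X$ is the configuration matrix of a centered FOR $\v$ of $G$ in that setting; since a FOR spans $\RR^{n-d-1}$, the matrix $X$ has rank $n-d-1$. By Lemma~\ref{lem:lgor}, in this pseudo-Euclidean setting the GORs are dense in the FORs, so I would pick a sequence of GORs $\v^{(k)}$ with configuration matrices $X^{(k)}\to X$, each again of rank $n-d-1$ (GORs span).

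The main point, and the step I expect to require the most care, is that the $\v^{(k)}$ need not be centered, so I must recenter them without destroying convergence to $\v$. Here I would use that, since $\v$ is centered, the all-ones vector $\mathbf 1$ lies in $\ker X$; as the full-rank matrices $X^{(k)}$ converge to $X$, their kernels (all of dimension $d+1$) converge to $\ker X$, so one can choose $\beta^{(k)}\in\ker X^{(k)}$ with $\beta^{(k)}\to\mathbf 1$. For $k$ large every entry of $\beta^{(k)}$ is close to $1$, hence nonzero, so $\beta^{(k)}$ is a full-rank centering map of $\v^{(k)}$. Scaling each vector by the corresponding $\beta^{(k)}_i$ preserves orthogonality and general linear position, so $\tilde\v^{(k)}_i := \beta^{(k)}_i\v^{(k)}_i$ is again a GOR in the same pseudo-Euclidean setting, it is centered because its configuration matrix $\tilde X^{(k)} = X^{(k)}\operatorname{diag}(\beta^{(k)})$ satisfies $\tilde X^{(k)}\mathbf 1 = X^{(k)}\beta^{(k)} = 0$, and $\tilde X^{(k)}\to X\operatorname{diag}(\mathbf 1) = X$.

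Then I would set $\Omega^{(k)} := (\tilde X^{(k)})^{t}S\,\tilde X^{(k)}$ and check it is a Gstress: it is real symmetric, has zeros on the non-edges of $G$ (orthogonality of the GOR), has $\mathbf 1$ in its kernel (the GOR is centered), has rank $n-d-1$ (full row rank of $\tilde X^{(k)}$ and nonsingularity of $S$, exactly as in Lemma~\ref{lem:sqrank}), and has every set of $n-d-1$ columns linearly independent, because any $n-d-1$ vectors of the general-position configuration $\tilde\v^{(k)}$ span $\RR^{n-d-1}$, so Lemma~\ref{lem: pseudo-euc gram} applies. Hence $\Omega^{(k)}\in\GST$, and since $\Omega^{(k)} = (\tilde X^{(k)})^{t}S\,\tilde X^{(k)}\to X^{t}SX = \Omega$, every standard-topology neighborhood of $\Omega$ meets $\GST$. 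The one subtlety to keep in mind here is that the approximating GORs, their recentering, and the Gram construction must all happen within the \emph{same} signature $S$ attached to $\Omega$, so that the convergence $\Omega^{(k)}\to\Omega$ holds.

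For the consequence, I would first note that $\GST\subseteq\FST$: in a Gstress every $n-d-1$ columns are independent, and (as $\FST\neq\emptyset$ forces $G$ to be $(d+1)$-connected, by Lemmas~\ref{lem:lgor} and~\ref{lem: fst iff flss}) each vertex has at least $d+1$ neighbors, so choosing any $d$ of them makes the locally-full-spanning condition automatic. Combining this with the density just proved gives $\GST\subseteq\FST\subseteq\overline{\GST}$, where $\overline{\GST}$ is the Zariski closure (which contains the standard-topology closure). Therefore $\FST$ and $\GST$ have the same Zariski closure, which is irreducible by Theorem~\ref{thm:gstr}; hence $\FST$ is irreducible, and $\dim\FST = \dim\overline{\GST} = \dim\GST = m-\binom{d+1}{2}$.
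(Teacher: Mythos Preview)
Your proof is correct and follows essentially the same approach as the paper: factor $\Omega$ as the pseudo-Euclidean Gram matrix of a centered FOR via Lemma~\ref{lem: fst iff flss}, approximate that FOR by GORs in the same signature using Lemma~\ref{lem:lgor}, recenter the approximating GORs by picking kernel vectors converging to the all-ones vector, and take Gram matrices. Your sequence formulation and the paper's $\eps$--$\delta$ formulation are interchangeable, and your recentering by $\beta^{(k)}\in\ker X^{(k)}$ is the same device as the paper's projection $x(r)$ of $\mathbf 1$ onto $\ker V''$ followed by scaling by $1/\alpha_i$. Your verification that $\Omega^{(k)}\in\GST$ via Lemma~\ref{lem: pseudo-euc gram} and your derivation of the consequence (including the explicit check that $\GST\subseteq\FST$ using $(d+1)$-connectivity) are, if anything, spelled out a bit more carefully than in the paper.
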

\begin{proof}
As $\Omega\in \FST$, by Lemma \ref{lem: fst iff flss}, 
$\Omega$ is the Gram matrix of a real or pseudo-Euclidean 
FOR $\v$ that has rank $D$ and is centered.  In particular,
the configuration matrix $V$ of $\v$ has the all ones
vector in its kernel. 

Let $\eps > 0$ be given.  By continuity of the map that 
sends a configuration to its Gram matrix, there is a 
$\delta > 0$ so that if $\v'$ is a configuration with 
\[
    \max_{1\le i\le n} \|\v_i - \v'_i\| < \delta
\]
then 
\[
    \|\Omega - \Omega'\| < \eps 
\]
where $\Omega'$ is the Gram matrix of $\v'$ and the norm 
on the lhs is the operator norm (or any equivalent to it).

By Lemma \ref{lem:lgor}, for any $r > 0$, there is a GOR 
$\v''$ so that 
\[
    \max_{1\le i\le n} \|\v_i - \v''_i\| < r
\]
Because $\v$ is a FOR and $\v''$ is a GOR, the configuration 
matrices $V$ and $V''$ have the same rank for any choice 
of $r$ and $\v''$.  If we let $x(r)$ be the orthogonal projection of 
the all ones vector onto the kernel of $V''$, then 
as $r\to 0$, $x(r)$ converges to the all ones vector.

It follows that we can find an $r > 0$ and a GOR $\v''$, 
so that 
\[
    \max_{1\le i\le n} \|\v_i - \v''_i\| < r < \delta/2
\]
and, for each coordinate $\alpha_i$ of $x(r)$, 
\[
    |1-1/\alpha_i|\|\v''_i\| < 2|1-1/\alpha_i|\|\v_i\| < \delta/2
\]

We now set, for each $1\le i \le n$
\[
    \v'_i = \frac{1}{\alpha_i}\v''_i
\]
This is possible for sufficiently small $\delta$ because it forces all of the $\alpha_i$ 
to be non-zero.  By construction 
\[
    \alpha_i \v''_1 + \cdots + \alpha_n \v''_n = 0
\]
so 
\[
    \v'_1 + \cdots + \v'_n = 0
\]
which implies that $\v'$ is centered.  Since none of the 
$\alpha_i$ are zero, by \cite[Lemma 3.2]{Alfakih-conn}, 
$\v'$ will also be a GOR.  Since  
\[
    \|\v_i - \v'_i\| \le \|\v_i - \v''_i\| + |1 - 1/\alpha_i|\|\v''_i\| < \delta 
\]
we have proven that $\GST$ is Euclidean-topology dense in $\FST$.

For the second statement, we simply note that as 
Gstresses are standard topology dense in the locally
full spanning stresses, they are also Zariski dense.
\end{proof}

Now we are ready to prove
Theorem~\ref{thm:fapprox2}.
The basic idea is to setup a map from stresses to kernel frameworks.
The only complication is that we do not have general position, so
we cannot fix the pinned vertices globally. Rather we do so
locally in a Zariski neighborhood of an appropriate $\Omega$.
\begin{proof}[Proof of Theorem~\ref{thm:fapprox2}]
Let $(G,\p)$ be an Fstressable framework, with
stress $\Omega$ from $\FST$
We select $d+1$ vertices that have a full span in $\p$ and pin them
in their places. Then we build a map from stress space to framework space
as in the proof of Proposition \ref{prop:gstConst}.
This rational map is continuous, and well defined over a Zariski-open 
neighborhood of $\Omega$.  By Theorem \ref{thm:fapprox}, this Zariski 
neighborhood contains has a Euclidean dense subset of Gstresses.
By continuity of the parameterization, we can, for any Euclidean 
neighborhood $U$ of $(G,\p)$, find a Gstress $\Omega'$ has a Gstressable 
kernel framework $(G,\p')$ in $U$.
\end{proof}

\begin{proof}[Proof of Theorem~\ref{thm:fapprox3}]
For the first statement, we will appply the
the same (complexified) strategy used to prove
Proposition \ref{prop:gstConst}.
We will select $d+1$ vertices
and pin to the cannonical simplex and
 use a rational map
from $\FST$ to Fstressable frameworks.

The only complication is that
in our current setting,
such a rational map may be undefined over
some subvariety $V$ of $\FST$, where, in the equilibrium framework,
the selected vertices would have a deficient span.
To deal with this
we can always pick a different set of vertices to pin, giving rise to a different
rational map, with it own undefined locus $W$.
Taking the union of the images over a finite number of
such maps we can obtain all Fstressable frameworks.

The irreducibility and
dimension  then follows from
the density of the Gstressable frameworks in
the Fstressable frameworks (Theorem~\ref{thm:fapprox2}).
\end{proof}

\section{Stressed Corank}
In this section we explore the typical rank of the rigidity matrix of a
Gstressable framework.  We will assume that $G$ is $d+1$-connected.

\begin{definition}
Let \defn{$\corank(G)$}
denote the the minimum of corank of the rigidity 
matrix $R(\p)$ over all 
 $d$-dimensional frameworks $(G,\p)$.
The quantity $\corank(G)$ is the corank of the rigidity matrix for 
every generic framework $(G,\p)$.
\end{definition}

\begin{definition}
Let $\corank(G,\p)$ be the corank of the rigidity matrix
for $(G,\p)$.
Let \defn{$\stressedCorank(G)$}
denote the minimum of $\corank(G,\p)$ over
all Gstressable frameworks.
\end{definition}
By definition $\stressedCorank(G) \ge \corank(G)$.

Because Gstressable is irreducible, we can talk about generic
frameworks and their behaviour.
\begin{lemma}
\label{lem:sco}
$\stressedCorank(G)$ is equal to the 
dimension of the space of equilibrium stresses for 
every generic Gstressable framework.
\end{lemma}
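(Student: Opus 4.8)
The plan is to relate both quantities to the dimension of the space of equilibrium stresses and then invoke the Maxwell index theorem together with irreducibility of the Gstressable set. Recall that for any framework $(G,\p)$ the equilibrium stresses form the cokernel of $R(\p)$, so by the Maxwell index theorem \eqref{eq: maxwell-index} the dimension $s(\p)$ of the stress space and $\corank(G,\p)$ satisfy $s(\p) = \corank(G,\p) + \bigl(m - dn + \binom{d+1}{2}\bigr)$ (this is just rank-nullity applied to $R(\p)$, which has $dn$ columns and $\binom{d+1}{2}$ trivial flexes when $\p$ affinely spans $\RR^d$). The constant $c := m - dn + \binom{d+1}{2}$ does not depend on $\p$. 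Hence, over the Gstressable frameworks (all of which have full affine span, by definition), minimizing $\corank(G,\p)$ is the same as minimizing $s(\p)$, so $\stressedCorank(G) + c$ equals the minimum of $s(\p)$ over Gstressable $(G,\p)$.

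Next I would argue that this minimum of $s(\p)$ is attained generically. Since Gstressable is irreducible (Proposition~\ref{prop:gstConst}), and $s(\p)$ is an upper-semicontinuous function of $\p$ on this set — the stress space is the cokernel of the matrix $R(\p)$ whose entries are polynomials in $\p$, so its dimension can only jump up on a proper subvariety — the minimum value of $s(\p)$ is achieved on a Zariski-open dense subset of the Gstressable frameworks. Concretely, the locus where $s(\p) \le k$ is defined by the vanishing of all $(dn - \binom{d+1}{2} - k + 1 + \text{(trivial part)})$-minors... more cleanly: the locus where $\operatorname{rank} R(\p) \ge r$ is open, so the locus where $s(\p)$ takes its minimum value is open; being nonempty (it contains the minimizing framework) and the ambient set being irreducible, it is dense. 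Thus every \emph{generic} Gstressable framework has $s(\p)$ equal to this minimum, which by the previous paragraph is $\stressedCorank(G) + c - c = \stressedCorank(G)$... wait: we need $\stressedCorank(G)$ to equal the dimension of the stress space, not the stress space minus $c$. Let me re-examine: the lemma asserts $\stressedCorank(G)$ \emph{equals} the dimension of the stress space for a generic Gstressable framework, so in fact the claim is that $c = 0$ along Gstressable frameworks, or rather that on Gstressable frameworks the nontrivial flex count and the excess $c$ conspire. Actually the cleanest reading: a Gstressable framework has a stress of rank $n-d-1$, and generically its stress space \emph{is} spanned in a controlled way; the assertion is simply that $\min_\p \corank(G,\p)$ over Gstressable $\p$ equals $\min_\p s(\p)$ over the same set, and both are attained generically. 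Since by Maxwell $\corank(G,\p) - s(\p)$ is the constant $-c$... so these two minima differ by the constant $c$, and the lemma as literally stated must be using a convention where $\corank$ refers to the corank \emph{beyond} the generic minimum, or $c=0$ is forced. I would resolve this by checking the paper's convention for $\corank$ (corank of $R(\p)$ as a $dn$-by-$m$... or $m$-by-$dn$ matrix) so that $\corank(G,\p)$ literally equals $s(\p) = \dim\operatorname{coker} R(\p)$; with that reading the lemma is immediate: $\corank$ of the rigidity matrix \emph{is} the stress-space dimension, so $\stressedCorank(G) = \min_{\text{Gstressable}} s(\p)$, attained generically by upper-semicontinuity plus irreducibility.

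So the key steps, in order, are: (1) identify $\corank(G,\p)$ with $\dim$ of the equilibrium stress space, i.e.\ $\dim\operatorname{coker} R(\p)$, using the paper's definition of corank; (2) observe $s(\p)$ is upper-semicontinuous in $\p$, so its minimum over the Gstressable frameworks is attained on a Zariski-open subset; (3) use irreducibility of the Gstressable set (Proposition~\ref{prop:gstConst}) to conclude this open subset is dense, hence contains all generic Gstressable frameworks; (4) conclude $\stressedCorank(G)$, defined as the minimum of $\corank(G,\p) = s(\p)$ over Gstressable frameworks, equals the common generic value of $s(\p)$. The main obstacle I anticipate is purely bookkeeping: making sure the semicontinuity argument is run on the irreducible Gstressable set rather than on all of configuration space (where the generic stress dimension would be the smaller $\corank(G)$), and being careful that "generic Gstressable framework" means generic \emph{within} that constructible set — which is exactly where irreducibility is needed, so that "generic" is unambiguous and a dense open condition holds at all such frameworks.
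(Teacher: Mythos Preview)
Your final summary (steps (1)--(4)) is correct and is exactly the paper's argument: once one knows that $\corank(G,\p)$ is by convention the \emph{row} corank of $R(\p)$, i.e.\ $m-\operatorname{rank} R(\p) = \dim\operatorname{coker} R(\p) = s(\p)$, the lemma reduces to upper-semicontinuity of $s(\p)$ (rank drops only on Zariski-closed subsets) together with irreducibility of the Gstressable set, so that the minimum is attained on a dense open and hence at every generic point. The paper's proof is precisely this, in one sentence.

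The Maxwell-index detour in your first two paragraphs is unnecessary and is what causes your confusion: under the paper's convention there is no offset $c$ to worry about, because $\corank(G,\p)$ \emph{is} $s(\p)$, not the flex count. (You can confirm the convention from the line ``Suppose that $G$ is generically stress free. Certainly $\stressedCorank(G) > 0 = \corank(G)$'' later in the section, which only makes sense if corank means stress count.) Drop that detour and your proof collapses to the paper's.
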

\begin{proof}
Since the stress count is the dimension of the left kernel of the rigidity matrix, 
and the rank of a matrix drops only when all its minors of some order vanish,  
the rank only drops on Zariski-closed subsets.
\end{proof}

The next statement gives us another way to think about stressedCorank
\begin{proposition}
Suppose $\Omega$ is generic in $\GST$. Let $\p$ be a kernel
framework of $\Omega$ with a $d$-dimensional span.
Then $\corank(G,\p) = \stressedCorank(G)$.
\end{proposition}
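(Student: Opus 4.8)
The plan is to relate the corank of the rigidity matrix $R(\p)$ at a generic kernel framework of a generic $\Omega$ to the stress count at a generic Gstressable framework, using Lemma~\ref{lem:sco} together with the two-way correspondence between Gstresses and Gstressable frameworks supplied by Lemma~\ref{lem:dual} (equivalently Lemma~\ref{lem:gsable}) and the constructions behind Proposition~\ref{prop:gstConst}.

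First I would recall that, by Maxwell's index relation \eqref{eq: maxwell-index}, for any framework $(G,\p)$ with $d$-dimensional span the corank of $R(\p)$ and the dimension $s$ of its stress space differ by a fixed constant depending only on $G$ and $d$ (the flex dimension $f = \corank(G,\p) - (\text{a constant})$, and $s - f = m - dn + \binom{d+1}{2}$). So it suffices to show that $s$, the stress count at our chosen $\p$, equals the stress count at a generic Gstressable framework; by Lemma~\ref{lem:sco}, the latter is $\stressedCorank(G)$ up to the same constant. Thus the whole statement reduces to: \emph{the stress space of a kernel framework $\p$ of a generic $\Omega\in\GST$ has the generic (i.e.\ minimal) dimension among all Gstressable frameworks.}

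Next I would set up the map used in Proposition~\ref{prop:gstConst}: after pinning $d+1$ vertices that span, there is a rational map $\sigma$ from a Zariski-open subset of $\GST$ to the Gstressable frameworks, sending $\Omega$ to its (now rigidified) kernel framework, and this map is dominant onto an irreducible set whose closure is the Gstressable set. Since $\GST$ is irreducible (Theorem~\ref{thm:gstr}) and Gstressable is irreducible (Proposition~\ref{prop:gstConst}), a generic $\Omega$ maps to a generic Gstressable framework $\p = \sigma(\Omega)$: the preimage of the non-generic (rank-dropping-further) locus is a proper closed subset of $\GST$, hence misses our generic $\Omega$. Therefore $\corank(G,\p)$ takes its generic value on Gstressable frameworks, which is $\stressedCorank(G)$ by definition and Lemma~\ref{lem:sco}. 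The one point needing care is that the kernel framework is only defined up to affine transformation, so I would either work with the pinned representative (as in Proposition~\ref{prop:gstConst}) or note that $\corank(G,\p)$ is affine-invariant, so the choice of representative with $d$-dimensional span is immaterial.

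The main obstacle is verifying genericity transfers correctly across $\sigma$, i.e.\ that ``$\p$ is a generic point of the Gstressable set'' genuinely follows from ``$\Omega$ is generic in $\GST$.'' This needs: (i) $\sigma$ is dominant onto Gstressable (already implicit in the proof of Proposition~\ref{prop:gstConst}, since every Gstressable framework has a Gstress in its kernel by Lemma~\ref{lem:gsable}); and (ii) for a dominant morphism of irreducible varieties, the image of a generic point is a generic point of the target, so it avoids any fixed proper closed subset — in particular the locus where $\corank$ exceeds $\stressedCorank(G)$, which is proper and closed by Lemma~\ref{lem:sco}. Once (i) and (ii) are in hand, combining with Maxwell's relation \eqref{eq: maxwell-index} to translate between stress count and corank finishes the proof.
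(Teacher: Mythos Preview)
Your proposal is correct and follows essentially the same approach as the paper: build a rational map from $\GST$ onto the Gstressable frameworks, use that generic points map to generic points under a dominant rational map, and invoke Lemma~\ref{lem:sco}. The only cosmetic differences are that the paper handles the affine ambiguity by passing to the bundle $\GST \times \{\text{non-singular affines}\}$ rather than by pinning, and that your opening detour through Maxwell's relation \eqref{eq: maxwell-index} is unnecessary here since in this paper $\corank(G,\p)$ \emph{is} the stress count (as the statement and proof of Lemma~\ref{lem:sco} make clear), so the ``fixed constant'' you introduce is zero.
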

\begin{proof}
The basic principle is that the image of a generic
point of an irreducible algebraic object under
a rational map, defined over $\QQ$, is a generic point in the image.
For this proposition, we build a map that takes
a stress $\Omega$  in $\GST$
and a non-singular affine transform $A$
to a kernel framework $\p$.
The image of this map is the Gstressable frameworks
(Lemma~\ref{lem:dual}).

When $\Omega$ is generic in $\GST$, we can find
an $A$ so that $(\Omega,A)$ is generic as a point
of a bundle.
Then from Lemma~\ref{lem:sco},
$\corank(G,\p) = \stressedCorank(G)$.
\end{proof}

We are now in a position to relate the dimension of the 
set of Gstressable frameworks of a graph $G$ to the 
stressed corank of $G$.
\begin{proposition}
For every graph $G$ and dimension $d$:
\ba
\dim(Gstressable) - d(d+1) &=& m - \binom{d+1}{2} - stressedCorank(G)
\\
\dim(Gstressable)  &=& m +\binom{d+1}{2}
- stressedCorank(G)
\ea
\end{proposition}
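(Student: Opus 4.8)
The plan is to set up a bijective rational correspondence between the Gstressable frameworks of $G$ and the bundle whose fiber over a stress $\Omega \in \GST$ is the affine orbit of its kernel framework, and then compare dimensions on both sides. Concretely, consider the total space $\mathcal{B}$ of pairs $(\Omega, A)$, where $\Omega \in \GST$ and $A$ is a non-singular affine transformation of $\RR^d$, together with the map sending $(\Omega, A)$ to the framework $(G, A(\p))$ where $\p$ is the (essentially unique, up to affine maps) $d$-dimensional kernel framework of $\Omega$. By Lemma~\ref{lem:dual}, the image of this map is exactly the set of Gstressable frameworks, and Lemma~\ref{lem:gstr1} (or the special-case parametrization of Lemma~\ref{lem:param1}) makes this a genuine rational map defined over $\QQ$. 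So I would compute $\dim(\text{Gstressable})$ as $\dim(\mathcal{B})$ minus the generic fiber dimension of this map.

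First I would pin down $\dim \mathcal{B}$. By Theorem~\ref{thm:gstr}, $\dim \GST = m - \binom{d+1}{2}$, and the affine group of $\RR^d$ has dimension $d(d+1)$ (a $d\times d$ linear part plus a translation in $\RR^d$), so $\dim \mathcal{B} = m - \binom{d+1}{2} + d(d+1)$. Next I would compute the generic fiber of the map $(\Omega,A) \mapsto (G, A(\p))$. Fix a generic Gstressable framework $(G,\q)$ in the image. Its preimage consists of those $(\Omega,A)$ with $A(\p_\Omega)$ congruent-as-a-labeled-configuration (in fact equal) to $\q$; since the $d$-dimensional kernel framework of $\Omega$ is unique up to affine transformations, specifying the framework $\q$ determines the affine-orbit of $\p_\Omega$, hence determines the kernel of $\Omega$ as a linear subspace, hence determines $\Omega$ up to scaling its columns only through the rank-$(n-d-1)$ stress-matrix constraint. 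More usefully: the fiber is the set of stress matrices $\Omega$ for $(G,\q)$ of rank exactly $n-d-1$, together with the unique $A$ realizing the kernel framework — and by Lemma~\ref{lem:sco}, for a generic Gstressable framework the full space of equilibrium stresses has dimension $\stressedCorank(G)$, so the rank-$(n-d-1)$ stresses form (a Zariski-open subset of) a space of the same dimension $\stressedCorank(G)$. Thus the generic fiber has dimension $\stressedCorank(G)$, giving
\[
    \dim(\text{Gstressable}) = m - \binom{d+1}{2} + d(d+1) - \stressedCorank(G).
\]
Finally I would simplify: $d(d+1) - \binom{d+1}{2} = 2\binom{d+1}{2} - \binom{d+1}{2} = \binom{d+1}{2}$, which yields $\dim(\text{Gstressable}) = m + \binom{d+1}{2} - \stressedCorank(G)$, the second displayed equation; subtracting $d(d+1)$ from both sides gives the first.

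The main obstacle is justifying that the generic fiber dimension is exactly $\stressedCorank(G)$ rather than merely at most it. The subtlety is two-fold: (i) one must check that over a generic point of the image, the $A$-coordinate is uniquely (or at worst finitely) determined — this uses that a $d$-dimensional configuration has a $d(d+1)$-dimensional affine orbit and no extra affine symmetries fixing it pointwise, which is automatic for affinely spanning $\p$; and (ii) one must identify the fiber in the $\Omega$-coordinate with (an open subset of) the space of equilibrium stresses of the fixed generic framework, and invoke Lemma~\ref{lem:sco} and the fact that genericity of $(G,\q)$ in the irreducible Gstressable set is inherited from genericity of $(\Omega, A)$ in $\mathcal{B}$ via the standard principle that a dominant rational map over $\QQ$ sends generic points to generic points. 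One must also confirm that the locus where $\Omega$ has rank $n-d-1$ (as opposed to lower) is dense in the full stress space of the generic Gstressable framework — but this holds because that framework by definition carries a rank-$(n-d-1)$ stress, so rank $< n-d-1$ is a proper Zariski-closed condition on its stress space. With those points in hand, the dimension bookkeeping is routine.
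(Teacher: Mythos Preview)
Your proposal is correct and takes essentially the same approach as the paper: both invoke the fiber-dimension theorem for a rational map from $\GST$ to kernel frameworks, identifying the domain dimension as $m-\binom{d+1}{2}$ (Theorem~\ref{thm:gstr}) and the generic fiber dimension as $\stressedCorank(G)$ (Lemma~\ref{lem:sco}). The only cosmetic difference is that the paper maps $\GST$ directly to \emph{pinned} frameworks (so the image has dimension $\dim(\text{Gstressable})-d(d+1)$), whereas you bundle in the affine group on the domain side and map to unpinned frameworks; the bookkeeping is equivalent.
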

\begin{proof}
In an irreducible algebraic setting, the dimension of the image of
a rational map plus the dimension of a generic fiber equals the dimension of
the domain (See e.g.~\cite[Theorem A.12]{ghave}).

On the first line, left hand side, we have the image of a map
from stresses to pinned frameworks. The
$d(d+1)$ accounts for the $d$-dimensional affine transforms.

\end{proof}

\subsection{Characterizing stressedCorank}

\begin{lemma}
\label{lem:Gnecc}
If $G$ is generically globally rigid in $\RR^d$, then
any  generic framework $(G,\p)$ in $\RR^d$
must have an equilibrium stress matrix  in $\GST$, and thus be Gstressable.
\end{lemma}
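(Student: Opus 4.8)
The plan is to assemble three results already in hand: Hendrickson's connectivity condition (Theorem~\ref{thm:hen}), the stress necessary condition of Gortler--Healy--Thurston (Theorem~\ref{thm:necc}), and the general-position transfer Lemma~\ref{lem:dual}.

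First I would record that, since $G$ is generically globally rigid in $\RR^d$, Theorem~\ref{thm:hen} shows $G$ is $(d+1)$-connected; in particular $n \ge d+2$, so $\GST$ is the nonempty object of Theorem~\ref{thm:gstr} and the hypotheses needed for Lemma~\ref{lem:dual} are in place. Then I would fix an arbitrary generic $d$-dimensional framework $(G,\p)$: genericity guarantees that $\p$ affinely spans $\RR^d$ and, crucially, that $\p$ is in affine general position. Applying the contrapositive of Theorem~\ref{thm:necc}, which asserts that every generic framework of a generically globally rigid graph has an equilibrium stress matrix of rank $n-d-1$, I obtain such a stress matrix $\Omega$ for $(G,\p)$. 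Since $(G,\p)$ affinely spans $\RR^d$ and $\Omega$ is an equilibrium stress matrix, each coordinate vector of $\p$ together with the all-ones vector lies in $\ker\Omega$, so the homogenized configuration $\hat\p$ is in $\ker\Omega$; i.e.\ $(G,\p)$ is a kernel framework for $\Omega$. Now the second half of Lemma~\ref{lem:dual}, applied with $\p$ in affine general position, $\Omega$ of rank $n-d-1$, and $\p$ in $\ker\Omega$, yields $\Omega \in \GST$. Finally, $\p$ has full affine span and carries a stress from $\GST$, so Lemma~\ref{lem:gsable} gives that $(G,\p)$ is Gstressable, completing the proof.

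There is essentially no deep step here, since the mathematical content is borrowed wholesale. The one place calling for care is the logical form of Theorem~\ref{thm:necc}: it is phrased as ``if \emph{some} generic framework lacks a rank-$(n-d-1)$ stress then $G$ is not generically globally rigid,'' and I would take care to read its contrapositive correctly as ``\emph{every} generic framework has such a stress'' (equivalently, to note that possessing a rank-$(n-d-1)$ equilibrium stress matrix is a generic property). Every other step is a direct invocation of a cited result.
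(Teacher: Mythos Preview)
Your proof is correct and follows essentially the same route as the paper: obtain a rank $n-d-1$ stress from Theorem~\ref{thm:necc}, note that a generic $\p$ is in affine general position, and apply Lemma~\ref{lem:dual} to conclude $\Omega\in\GST$. Your version is simply more explicit in checking hypotheses (invoking Theorem~\ref{thm:hen} for $(d+1)$-connectivity, spelling out the contrapositive of Theorem~\ref{thm:necc}, and citing Lemma~\ref{lem:gsable} for the Gstressable conclusion), all of which the paper leaves implicit.
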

\begin{proof}
Let $\p$ be generic.  By Theorem~\ref{thm:necc}
$(G,\p)$ must be in the kernel of some stress matrix
$\Omega$  of rank $n - d - 1$.  As $\p$ is generic, it
is in affine general position. Lemma \ref{lem:dual} then tells us
that $\Omega\in \GST$.
\end{proof}

\begin{theorem}
Suppose that $G$ is generically globally rigid in $\RR^d$.
Then $stressedCorank(G)=corank(G)$.
\end{theorem}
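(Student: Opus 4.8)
The plan is to show both inequalities. Since $\stressedCorank(G) \ge \corank(G)$ holds by definition, it suffices to produce a single Gstressable framework whose rigidity matrix has corank equal to $\corank(G)$; then the minimum defining $\stressedCorank(G)$ is at most $\corank(G)$, forcing equality.

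To produce such a framework, I would invoke Lemma~\ref{lem:Gnecc}: since $G$ is generically globally rigid in $\RR^d$, every generic framework $(G,\p)$ in $\RR^d$ is Gstressable. But a generic framework is exactly one on which the rigidity matrix attains its maximal rank, i.e.\ its corank equals $\corank(G)$ by the definition of $\corank(G)$. So any generic $(G,\p)$ is simultaneously Gstressable and has $\corank(G,\p) = \corank(G)$. This immediately gives $\stressedCorank(G) \le \corank(G,\p) = \corank(G)$.

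Combining this with the a priori bound $\stressedCorank(G) \ge \corank(G)$ yields $\stressedCorank(G) = \corank(G)$, as claimed.

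There is no real obstacle here; the content is entirely front-loaded into Lemma~\ref{lem:Gnecc} (which in turn rests on Theorem~\ref{thm:necc} and Lemma~\ref{lem:dual}). The only thing to be slightly careful about is the logical shape of the argument: $\stressedCorank$ is a minimum over a restricted class of frameworks, so showing it equals $\corank(G)$ requires exhibiting a Gstressable framework achieving the generic (minimal) corank, and generic frameworks do the job precisely because generic global rigidity makes them Gstressable. I would write this as two or three sentences: cite Lemma~\ref{lem:Gnecc} for Gstressability of generic frameworks, note that generic frameworks realize $\corank(G)$, conclude $\stressedCorank(G) \le \corank(G)$, and finish with the trivial reverse inequality.
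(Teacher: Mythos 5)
Your proof is correct, and it is in fact a touch cleaner than the paper's.  You and the paper both lean on the same key ingredient, Lemma~\ref{lem:Gnecc} (that generic global rigidity forces every generic framework to be Gstressable).  The difference is in how each argument then closes.  The paper takes a generic framework $\p$, argues that it is also generic \emph{as a point of the Gstressable set}, and then invokes Lemma~\ref{lem:sco} to identify $\corank(G,\p)$ with $\stressedCorank(G)$.  That step quietly relies on knowing that the Gstressable set is irreducible and that a globally generic $\p$ is generic inside it.  You bypass all of that: since $\stressedCorank(G)$ is defined as a minimum over Gstressable frameworks, exhibiting a single Gstressable framework with corank $\corank(G)$ gives $\stressedCorank(G)\le\corank(G)$, and the reverse inequality is immediate from the definition.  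Both routes are valid; yours trades the appeal to Lemma~\ref{lem:sco} and genericity-in-a-subvariety for a direct use of the ``minimum'' in the definition, which is the more economical argument for this particular theorem.
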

\begin{proof}
Let $\p$ be a generic framework. We have $corank(G,\p)=corank(G)$.
From Lemma~\ref{lem:Gnecc},
$\p$   is Gstressable.
Since $\p$ is a generic framework, $\p$ is also generic in Gstressable.
Thus, using Lemma~\ref{lem:sco}
$corank(G,\p)=stressedCorank(G)$.
\end{proof}

A much more interesting question arises when $G$ is
$(d+1)$-connected but not generically globally rigid.

In the next proof, we will use the following.  See \cite[Lemma B.4, inter alia]{ghave}
\begin{lemma}\label{lem:stress-wiggle}
Let $(G,\p)$ be a framework in dimension $d$, so that the rank of the rigidity matrix at $\p$ is
maximum over all configurations.  If there is a stress matrix
$\Omega$ for $(G,\p)$ that is  of rank $n - d - 1$, then every sufficiently
small neighborhood $U$ of $\p$ contains only frameworks $\q$ that
have a  stress matrix of rank $n - d - 1$.  (Moreover, if $\Omega$ is
PSD, then all the frameworks in $U$ have a PSD stress matrix of
rank $n - d -1$.)
\end{lemma}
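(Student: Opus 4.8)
The plan is to transport $\Omega$ to nearby configurations by orthogonal projection onto their spaces of equilibrium stress matrices, and to pin its rank down by a two‑sided estimate using only lower semicontinuity of matrix rank together with the fact that a stress matrix of an affinely spanning $d$‑configuration automatically has rank at most $n-d-1$.

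I would first reduce to the case that $\p$ affinely spans $\RR^d$. Since $\Omega$ is a stress matrix of $(G,\p)$, the homogeneous coordinates of $\p$ span a subspace of $\ker\Omega$, which has dimension $n-(n-d-1)=d+1$; hence $\p$ spans an affine flat of dimension at most $d$, and if that dimension were strictly smaller then, in coordinates adapted to the flat, $R(\p)$ would have an entire block of columns identically zero, which one checks is incompatible with $R(\p)$ having maximal rank. (In the applications $\p$ is generic, so this is automatic.) So assume $\p$ spans $\RR^d$; then every configuration $\q$ in a small enough neighborhood $U_0$ of $\p$ also spans $\RR^d$, so the matrix $Q$ of homogeneous coordinates of such a $\q$ has rank $d+1$.

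Because $R(\cdot)$ attains its maximal rank at $\p$ and rank is lower semicontinuous, $R(\q)$ has this same maximal rank for every $\q$ in a neighborhood $U\subseteq U_0$ of $\p$; hence the space of equilibrium stress matrices of $(G,\q)$ --- the image of $\operatorname{coker} R(\q)$ under the injective linear map $\omega\mapsto\Omega_\omega$ --- has constant dimension on $U$ and varies continuously with $\q$ (a cokernel depends continuously on the matrix wherever the rank is locally constant). Let $\Omega(\q)$ be the orthogonal projection of $\Omega$ onto this subspace: then $\q\mapsto\Omega(\q)$ is continuous, $\Omega(\p)=\Omega$ (as $\Omega$ already lies in the subspace at $\q=\p$), and each $\Omega(\q)$ is a stress matrix of $(G,\q)$. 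The relation $\Omega(\q)Q=0$ with $\operatorname{rank} Q=d+1$ forces $\operatorname{rank}\Omega(\q)\le n-d-1$; and since $\operatorname{rank}\Omega(\p)=\operatorname{rank}\Omega=n-d-1$, lower semicontinuity of rank together with continuity of $\Omega(\cdot)$ forces $\operatorname{rank}\Omega(\q)\ge n-d-1$ after shrinking $U$. Hence $\operatorname{rank}\Omega(\q)=n-d-1$ for all $\q\in U$, which is the assertion.

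For the PSD addendum: if $\Omega$ is positive semidefinite, then for $\q\in U$ the symmetric matrix $\Omega(\q)$ has rank exactly $n-d-1$, hence exactly $d+1$ zero eigenvalues, and it converges to $\Omega$ as $\q\to\p$. Since $\Omega$ has nonnegative eigenvalues with exactly $d+1$ of them zero, continuity of eigenvalues forces the $n-d-1$ nonzero eigenvalues of $\Omega(\q)$ to be positive once $\q$ is close enough to $\p$; so $\Omega(\q)$ is PSD. No implicit function theorem is needed anywhere. The step I expect to require the most care is the reduction to an affinely spanning configuration --- and, to a lesser extent, the (standard) continuity of the stress space under the constant‑rank hypothesis --- after which the rank squeeze is immediate.
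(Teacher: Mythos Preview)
The paper does not supply its own proof of this lemma; it simply records the statement and cites \cite[Lemma B.4]{ghave}.  So there is no in-paper argument to compare against, and your proposal stands or falls on its own.

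Your core argument is correct and clean.  Under the maximal-rank hypothesis the cokernel of $R(\q)$ has locally constant dimension and varies continuously; orthogonal projection of $\Omega$ onto it gives a continuous family $\Omega(\q)$ of equilibrium stress matrices with $\Omega(\p)=\Omega$; the kernel containment $\Omega(\q)Q=0$ with $\operatorname{rank} Q=d+1$ gives the upper bound $\operatorname{rank}\Omega(\q)\le n-d-1$, and lower semicontinuity of rank gives the matching lower bound.  The PSD addendum via continuity of eigenvalues is also fine.

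The one soft spot is exactly the step you flag.  Your observation that a deficient span produces a zero block of $(d-s)n$ columns in $R(\p)$ is correct, but by itself this only yields $\operatorname{rank} R(\p)\le sn$, which need not contradict maximality: for sufficiently sparse $G$ one can have $r_s(G)=r_d(G)$ (forests, for instance).  What actually closes the gap is the \emph{stress} hypothesis: if $\p$ spans an $s$-flat with $s<d$ and $R(\p)$ has maximal rank, then $r_s(G)=r_d(G)$, and one must argue that this forces $G$ to be generically independent in dimension $d$, whence the stress space at $\p$ is trivial --- contradicting $\operatorname{rank}\Omega=n-d-1\ge 1$.  That is not a ``one checks'' step; it needs a short rigidity-matroid argument.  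A simpler fix, entirely adequate for this paper, is to add ``$\p$ affinely spans $\RR^d$'' to the hypotheses: every invocation of the lemma here (the infinitesimally rigid framework in Theorem~\ref{thm:main}, the general-position framework in the Proposition) already satisfies it.
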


\begin{proposition}
Suppose that $G$ is
$(d+1)$-connected but not generically globally rigid.
Then $stressedCorank(G) >  corank(G)$.
\end{proposition}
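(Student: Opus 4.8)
The goal is to show that if $G$ is $(d+1)$-connected but not generically globally rigid, then $\stressedCorank(G) > \corank(G)$; equivalently, by Lemma~\ref{lem:sco}, that the stress space of a generic Gstressable framework has dimension strictly larger than $\corank(G)$, the generic stress dimension. The natural strategy is to argue by contradiction: suppose $\stressedCorank(G) = \corank(G)$. Then a generic Gstressable framework $(G,\p)$ has the same corank as a generic framework, so its rigidity matrix has maximal rank. Now invoke Lemma~\ref{lem:stress-wiggle}: since $(G,\p)$ has maximal rigidity-matrix rank and a stress matrix of rank $n-d-1$ (by Gstressability, via Lemma~\ref{lem:gsable}), every sufficiently small neighborhood $U$ of $\p$ consists of frameworks carrying a stress matrix of rank $n-d-1$. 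In particular, a \emph{generic} framework near $\p$ has a rank $n-d-1$ stress matrix. By Theorem~\ref{thm:suff} (Connelly's sufficient condition), this forces $G$ to be generically globally rigid in $\RR^d$, contradicting the hypothesis.

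The one gap to fill is the claim that a generic framework of $G$ lies in $U$, i.e., that the Gstressable set is full-dimensional among all frameworks — but that is false in the interesting regime and is exactly \emph{not} what we want. The fix is that we do not need a generic framework of all of $\RR^{dn}$; we only need a framework in $U$ that is generic enough to detect that $G$ has \emph{some} generic framework with a rank $n-d-1$ stress. Here is the correct route: pick $\p$ to be a generic point of the Gstressable set, so $\corank(G,\p) = \stressedCorank(G) = \corank(G)$ under our assumption. Then $\p$ has maximal rigidity-matrix rank (equal to that of a generic framework of $G$). Apply Lemma~\ref{lem:stress-wiggle} at $\p$: a neighborhood $U$ of $\p$ has \emph{every} framework carrying a stress of rank $n-d-1$. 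Since $U$ is a Euclidean-open subset of $\RR^{dn}$, it contains a framework $\q$ with algebraically independent coordinates, i.e., a generic framework of $G$. That generic $\q$ has a rank $n-d-1$ stress matrix, so Theorem~\ref{thm:suff} applies and $G$ is generically globally rigid — the desired contradiction.

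A detail worth checking in the writeup: Lemma~\ref{lem:stress-wiggle} is stated for frameworks whose rigidity-matrix rank is maximal over all configurations of $G$, so we must confirm that a generic point of the Gstressable set indeed achieves this maximum. Under the assumption $\stressedCorank(G) = \corank(G)$, a generic Gstressable framework has corank equal to $\corank(G)$, which by definition is the corank (hence the rank is the max) of every generic framework of $G$; so the rigidity-matrix rank there is the global maximum, as required. Combined with Gstressability giving the rank $n-d-1$ stress, the hypotheses of Lemma~\ref{lem:stress-wiggle} are met.

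\begin{proof}
Suppose, for contradiction, that $\stressedCorank(G) = \corank(G)$.  Let $(G,\p)$ be a generic framework in the (irreducible) set of Gstressable frameworks.  By Lemma~\ref{lem:sco} and the assumption, $\corank(G,\p) = \stressedCorank(G) = \corank(G)$, so the rank of the rigidity matrix $R(\p)$ is maximal over all $d$-dimensional configurations of $G$.  Since $\p$ is Gstressable, by Lemma~\ref{lem:gsable} it has an equilibrium stress matrix $\Omega$ of rank $n - d - 1$.  We may therefore apply Lemma~\ref{lem:stress-wiggle} at $\p$: there is a Euclidean neighborhood $U$ of $\p$ in $\RR^{dn}$ such that every $\q \in U$ has an equilibrium stress matrix of rank $n - d - 1$.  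As $U$ is open in $\RR^{dn}$, it contains a framework $\q$ whose coordinates are algebraically independent over $\QQ$, i.e., a generic framework of $G$.  Then $(G,\q)$ is a generic $d$-dimensional framework with an equilibrium stress matrix of rank $n - d - 1$, so by Theorem~\ref{thm:suff}, $G$ is generically globally rigid in $\RR^d$.  This contradicts the hypothesis.  Hence $\stressedCorank(G) > \corank(G)$.
\end{proof}
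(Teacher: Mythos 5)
Your proof is correct and follows essentially the same argument as the paper: assume $\stressedCorank(G) = \corank(G)$, note that the Gstressable framework witnessing this has maximal rigidity-matrix rank and a rank $n-d-1$ stress, invoke Lemma~\ref{lem:stress-wiggle} to get a Euclidean neighborhood where every framework has a rank $n-d-1$ stress, find a generic framework in that open set, and apply Theorem~\ref{thm:suff} for the contradiction. The only cosmetic difference is that you route through Lemma~\ref{lem:sco} and a generic point of the Gstressable set, while the paper simply takes any Gstressable framework achieving the minimum corank; both yield the same maximal-rank hypothesis needed for Lemma~\ref{lem:stress-wiggle}.
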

Before giving the proof, we observe the following easy case.  Suppose that $G$ is generically stress free.  Certainly
$stressedCorank(G) > 0 = corank(G)$.
\begin{proof}
Suppose there were a Gstressable framework $\p$
with only $corank(G)$ stresses. This means that its rigidity
matrix displays the maximal possible rank for $G$ and $d$.
Then from Lemma~\ref{lem:stress-wiggle},
all configurations $\q$ in a sufficiently small neighborhood of $\p$
have a stress matrix of rank $n-d-1$.
In this neighborhood, there must be a generic $\q$, where the stress matrix would
give a certificate of generic global rigidity as per Theorem~\ref{thm:suff}.
A contradiction.
\end{proof}

One might be tempted to expect that for a non
generically globally rigid graph, we would have
$stressedCorank(G)=corank(G)+1$.
But this is not true. In the example of Figure~\ref{stressed-rank-jump.fig},
we have $corank(G)=0$, but it can be shown that
$stressedCorank(G)=2$. So this presents the following question: Is there a characterization that
gives us stressedCorank(G)?
Perhaps the simplest relevant case is for $G$
a non generically globally rigid,
$3$-connected, planar graph with at least one triangular face.

\begin{figure}[h]
\centering
\includegraphics[scale=0.35]{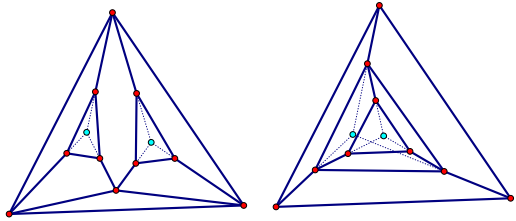}
\captionsetup{labelsep=colon,margin=1.3cm}
\caption{This shows two frameworks such that when the configuration is generic, they only have the $0$ stress and they are infinitesimally rigid, so by the count of edges and vertices, the number of independent stresses is equal to the number of independent infinitesimal flexes.  But when the
stress coefficients on all the inner members is chosen generically, say with positive numbers, and the
stress coefficients on the external triangle are chosen to create equilibrium (necessarily with negative stresses), then the dimension of the stress space is two.  The green vertices are not vertices of the framework, but the intersection of three lines that are extensions of edges in the framework needed to insure the equilibrium condition for our assumed stress. In both cases there are two independent infinitesimal flexes that are infinitesimal rotations, each on a triangle about its center green point.}
\label{stressed-rank-jump.fig}
\end{figure}

\section{Generic universal rigid framework}
\label{sec:stress}
The following is the main theorem of~\cite{ghave}.
\begin{theorem}
\label{thm:main}
If $G$ is generically globally rigid in $\RR^d$, then
there exists a framework $(G,\p)$ in $\RR^d$ that is
infinitesimally rigid in $\RR^d$ and super stable.
Moreover, every framework in a small enough neighborhood of $(G,\p)$
will be infinitesimally rigid in $\RR^d$ and
super stable. This neighborhood  must include some generic framework.
\end{theorem}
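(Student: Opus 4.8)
The plan is to realize the desired $(G,\p)$ as a sufficiently generic kernel framework of a \emph{positive semidefinite} Gstress, and to use the irreducibility of the Gstressable set (Proposition~\ref{prop:gstConst}) to force the super-stable frameworks and the infinitesimally rigid frameworks inside it to overlap.

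First I would assemble the structural inputs. Since $G$ is generically globally rigid, Theorem~\ref{thm:hen} gives that $G$ is $(d+1)$-connected, so by Theorems~\ref{thm:pgstr} and~\ref{thm:gstr} the sets $\PGST \subseteq \GST$ are both nonempty of the same dimension $m-\binom{d+1}{2}$ and $\GST$ is irreducible; hence $\PGST$ is Zariski-dense in $\GST$. By Lemma~\ref{lem:Gnecc} (i.e.\ Theorem~\ref{thm:necc} together with Lemma~\ref{lem:dual}) every generic framework $(G,\p)$ is Gstressable, and, since generic global rigidity implies generic rigidity, which by Asimow--Roth~\cite{Asimow-Roth-I} coincides with generic infinitesimal rigidity, every generic framework is also infinitesimally rigid.

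Next I would select the framework. Let $Y$ be the set of Gstressable frameworks that arise as kernel frameworks of some $\Omega\in\PGST$; this is a subset of the Gstressable set by Lemma~\ref{lem:dual}, and, since equilibrium stress matrices are invariant under affine maps of the configuration, $Y$ is exactly the affine orbit of the kernel frameworks of elements of $\PGST$. Because $\PGST=\LL$ (Lemma~\ref{lem:pgeq}) is Zariski-dense in $\GST$, and the Gstressable set is, up to affine transforms, the image of $\GST$ under the kernel-framework construction used in Proposition~\ref{prop:gstConst}, the set $Y$ is Zariski-dense in the irreducible Gstressable set. On the other hand, infinitesimal rigidity is the condition that the rigidity matrix attains its maximum possible rank $dn-\binom{d+1}{2}$, which is Zariski-open on configuration space; so the infinitesimally rigid Gstressable frameworks form a Zariski-open subset of the Gstressable set, and it is nonempty because it contains the generic frameworks. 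A dense subset of a topological space meets every nonempty open subset, so I can choose $(G,\p)\in Y$ that is infinitesimally rigid. Such a $\p$ has $d$-dimensional affine span and carries a PSD equilibrium stress matrix $\Omega$ of rank $n-d-1$, so by Theorem~\ref{thm:alf} the framework $(G,\p)$ is super stable; alternatively, super stability is immediate from the PSD rank-$(n-d-1)$ stress together with the fact that an infinitesimally rigid framework has its edge directions off every conic at infinity.

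It then remains to propagate these properties to a neighborhood. Infinitesimal rigidity is a Euclidean-open condition, so a small enough neighborhood $U$ of $\p$ consists of infinitesimally rigid frameworks, which in particular have $d$-dimensional affine span and edge directions off every conic at infinity. Since $(G,\p)$ is infinitesimally rigid its rigidity matrix has the maximum rank over all configurations, and it carries a PSD stress matrix of rank $n-d-1$; so Lemma~\ref{lem:stress-wiggle} (the PSD version) lets me shrink $U$ so that every $\q\in U$ also carries a PSD stress matrix of rank $n-d-1$, whence every framework in $U$ is super stable. Finally, the non-generic configurations are a countable union of proper algebraic subsets of $\RR^{dn}$, hence Euclidean-meager, so $U$ contains a generic framework. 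The only step with real content is the intersection argument in the third paragraph, where irreducibility of $\GST$ (to make the PSD kernel frameworks dense in the Gstressable set) is combined with the necessity direction of Gortler--Healy--Thurston (to make the infinitesimally rigid frameworks a nonempty open piece); everything else is routine openness, density, and an appeal to Lemma~\ref{lem:stress-wiggle}.
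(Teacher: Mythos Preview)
Your proof is correct and is essentially the paper's argument, reorganized to take place on the framework side rather than the stress side.  The paper stays in $\GST$: it defines $\GST(\mathrm{GIF})$, the Gstresses whose kernel frameworks are infinitesimally flexible, shows this is a proper Zariski-closed subset of the irreducible $\GST$ (hence of dimension $<D_L$), and then observes that $\PGST$, having dimension $D_L$, cannot be contained in it.  You instead push the density through the kernel map first---$\PGST$ is Zariski-dense in $\GST$, hence its kernel-framework images $Y$ are Zariski-dense in the Gstressable set---and then intersect with the nonempty Zariski-open set of infinitesimally rigid frameworks.  These are dual formulations of the same dimension/irreducibility argument; the ingredients (Theorems~\ref{thm:pgstr} and~\ref{thm:gstr}, Lemma~\ref{lem:Gnecc}, and Lemma~\ref{lem:stress-wiggle}) and the neighborhood step are identical.
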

Here we give a much simpler proof based on $\GST$.

\begin{proof}[Proof of Theorem~\ref{thm:main}]
Let GC be the configurations that are in general
position. This is quasi-projective and irreducible.
Let GIF be the
general position frameworks that are infinitesimally flexible. This is a Zariski-closed subset of GC
(ie. defined by the vanishing of a non-trivial
polynomial).
Let $\GST(GIF)$ be the stresses in $\GST$ that have
a kernel framework in GIF (and so no kernel frameworks
that are infinitesimally rigid).
By
Lemma~\ref{lem:Gnecc},  $\GST(GIF) \subsetneq \GST$.
Now we will show that $\dim(\GST(GIF))<\dim(\GST)$.

We (again) consider our  rational map that maps from a matrix
$\Omega \in \GST$
to the
framework in its kernel with a $d$-dimensional affine span
and with $d+1$ chosen vertices pinned in
canonical locations.
Due to general position,
this map is defined everywhere over $\GST$.
Let $W$ be the preimage of GIF under this map.
This is a Zariski closed subset of $\GST$.
We have $W \subseteq \GST(GIF) \subsetneq \GST$.
Since $GIF$ is a Zariski-closed subset of the range,
its preimage, $W$, lies
in a Zariski closed strict subset of $\GST$.

From Theorem~\ref{thm:hen}, $G$ must be $(d+1)$-connected.
Thus from Theorem~\ref{thm:gstr}, Gstr must be
irreducible and of dimension $D_L$.
Since this Zariski closed
subset $W$ is strict and $\GST$ is irreducible, then
it must be of strictly lower dimension.

Now, any framework in $GIF$ is related, by a non-singular
affine transform to framework with its
$d+1$ chosen vertices
in the canonical position. This canonically transformed framework
must also be in $GIF$, as infintesimal flexibility is invariant to non-singular
affine transforms.  Thus
$\GST(GIF)=W$.
So we have shown $\dim(\GST(GIF))<D_L$.

Since $\GST(GIF)$ has dimension less than $D_L$,
and $\PGST$ is dimension $D_L$ by Corollary~\ref{cor:dl},
we must have $\GST(GIF) \not\supset \PGST$.
Any stress in $\PGST$ and not in
$\GST(GIF)$ must have its full-span
kernel frameworks be infintesimally rigid.
This means we must have a PSD max rank stress matrix $\Omega$
with a infinitesimally rigid kernel
 $(G,\p)$.
 This framework must be super stable.


To finish the proof, we note
infinitesimal rigidity is an open property  will
preserved in a small enough neighborhood of $\p$.
Lemma \ref{lem:stress-wiggle}, which applies by infinitesimal rigidity of $(G,\p)$
then gives us a neighborhood $U$ of $\p$ in which every framework $(G,\q)$
has a PSD stress matrix of rank $n - d -1$.
This gives us super stability in this neighborhood.

Finally, any open neighborhood contains a generic point.

\end{proof}

\appendix{}

\section{Algebraic geometry background}

We quickly review some algebraic geometry.

\begin{definition}
  \label{def:generic-gen}
  An real
\defn{algebraic set}~$V$
is a subset of $\RR^n$
that can be
  defined by a finite set of algebraic equations.

A \defn{Zariski open subset}
of some set $S \in \RR^n$
is a subset  of $S$ defined by removing an
algebraic set from $S$.
A \defn{Zariski closed subset} of some set $S$ is
the intersection of $S$ and an
algebraic set.

A \defn{quasi-projective variety} is a Zariski open
subset of an algebraic set. (Ie. we can cut
out some subvariety of a variety).

A \defn{constructible set} is the finite union of
quasi-projective varieties.

A \defn{semi-algebraic set}~$S$
  is a subset of $\RR^n$ that can be defined by
 a finite set of   algebraic
  equalities and inequalities
as well as
a finite number of Boolean operations.

Any semi-algebraic set is constructible.
Any constructible set is quasi-projective.
Any quasi-projective variety is algebraic.

A real semi-algebraic set $S$ has a
real \defn{dimension}
$\dim(S)$, which we will define as the largest $t$ for which there
is an open subset of~$S$, in the Euclidean topology, that is
is a smooth $t$ dimensional smooth sub-manifold of $\RR^n$.

The Zariski-closure of a semi-algebraic set $S$
is the smallest
real algebraic set $V$ containing $S$. $V$ will have the
same dimension as $S$.

An algebraic set called \defn{irreducible}
if it is not the union of two proper
 algebraic subsets.

A semi-algebraic set is called irreducible if its
Zariski closure is irreducible.

Any Zariski-closed strict subset of an irreducible
semi-algebraic set must be of
strictly lower dimension.

The image of a  real algebraic, quasi-projective,
constructible, or
semi-algebraic  set
set under a polynomial or rational  map
is semi-algebraic.
If the domain is irreducible, then so too is the image.
\end{definition}

\begin{definition}
In the complex setting $\CC^n$, we can also
define algebraic sets, quasi-projective varieties and
constructible sets accordingly. (There is no notion
of semi-algebraic sets.)

The image of a  complex algebraic, quasi-projective,
or constructible set
set under a polynomial or rational  map
is constructible.
If the domain is irreducible, then so too is the image.

\end{definition}

\begin{definition}
  Given an irreducible object $S$ of any one of the above types,  we call a point $\x \in S$ \defn{generic}
  if $\x$ does not satisfy any polynomial equations with
  coefficients in $\QQ$ that do not vanish on all of $S$.

  A rational map acting on $S$, and defined using coefficients
  in $\QQ$,
  will map generic points of the domain
  to generic points in the image.
\end{definition}

\begin{lemma}
\label{lem:locus}
If the real locus of an irreducible complex
quasi-projective variety  has a real dimension matching that
of the complex quasi-projective variety, then it too must be irreducible.
\end{lemma}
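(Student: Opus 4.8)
The plan is to reduce everything, via Zariski closures and complexification, to the elementary fact that an irreducible complex variety admits no proper Zariski-closed subvariety of the same dimension. Write $V \subseteq \CC^N$ for the given irreducible complex quasi-projective variety, let $\bar V$ denote its Zariski closure in $\CC^N$ (still irreducible, of complex dimension $n := \dim_{\CC} V$), and let $V_{\RR} := V \cap \RR^N$ be its real locus, a constructible subset of $\RR^N$; by hypothesis $\dim_{\RR} V_{\RR} = n$. Let $W \subseteq \RR^N$ be the Zariski closure of $V_{\RR}$. Since $W$ is its own Zariski closure, proving the lemma amounts to showing that $W$ is irreducible as a real algebraic set.

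I would first record two standard facts. For a real algebraic set $S \subseteq \RR^N$ with complexification $S_{\CC}$ (the Zariski closure of $S$ in $\CC^N$, equivalently the complex variety cut out by the real polynomials vanishing on $S$): (i) $S_{\CC} \cap \RR^N = S$, and $\dim_{\CC} S_{\CC} = \dim_{\RR} S$ --- the dimension equality being the content of the Whitney-type results cited above, both sides equalling the Krull dimension of the coordinate ring of $S$, which is preserved by the extension $\RR \to \CC$; and (ii) the Zariski closure of a finite union is the union of the Zariski closures (in either $\RR^N$ or $\CC^N$). Applying these, I claim $W_{\CC} = \bar V$: since $V_{\RR} \subseteq \bar V$ and $\bar V$ is Zariski-closed in $\CC^N$, we get $W_{\CC} \subseteq \bar V$; and $\dim_{\CC} W_{\CC} = \dim_{\RR} W = \dim_{\RR} V_{\RR} = n = \dim_{\CC} \bar V$ by (i) together with the fact that Zariski closure preserves dimension. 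A closed subvariety of the irreducible variety $\bar V$ of full dimension $n$ must equal $\bar V$, so $W_{\CC} = \bar V$ is irreducible.

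To finish, I would argue by contradiction: suppose $W = W_1 \cup W_2$ with $W_1, W_2$ proper real algebraic subsets. Complexifying and using (ii), $\bar V = W_{\CC} = (W_1)_{\CC} \cup (W_2)_{\CC}$, so irreducibility of $\bar V$ forces, say, $(W_1)_{\CC} = \bar V$. Intersecting with $\RR^N$ and using (i), $W_1 = (W_1)_{\CC} \cap \RR^N = \bar V \cap \RR^N \supseteq V_{\RR}$. But $W_1$ is real Zariski-closed and contains $V_{\RR}$, hence contains the Zariski closure $W$ of $V_{\RR}$; together with $W_1 \subseteq W$ this gives $W_1 = W$, contradicting that $W_1$ is proper. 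Therefore $W$ is irreducible, which is what the definition of an irreducible semi-algebraic set requires here.

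The only genuine subtlety I expect is the careful bookkeeping in fact (i) --- particularly the dimension identity $\dim_{\CC} S_{\CC} = \dim_{\RR} S$, which is precisely where the hypothesis that the real and complex dimensions agree enters the argument (it is what pins down $W_{\CC} = \bar V$; without it $W_{\CC}$ could be a proper, possibly reducible subvariety of $\bar V$). Everything else is formal manipulation with closures, intersection with $\RR^N$, and the irreducibility of $\bar V$, and needs no ideas beyond those already in the appendix.
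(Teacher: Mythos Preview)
Your argument is correct and follows essentially the same route as the paper's sketch: identify the complexification of the real Zariski closure $W$ with $\bar V$ via the dimension hypothesis and irreducibility of $\bar V$, then transfer irreducibility back down from $W_{\CC}$ to $W$. The only difference is presentational---the paper cites Whitney for ``$V$ reducible $\Rightarrow V^*$ reducible'' whereas you unpack this step directly via $(W_1)_{\CC}\cup(W_2)_{\CC}=\bar V$ and fact~(i)---but the underlying structure is identical.
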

\begin{proof}[Proof Sketch]
Let $V$ be a real algebraic set and $V^*$ be the smallest
complex algebraic set that contains $V$ (called
its complexification).
The complex dimension of $V^*$ must equal
the real dimension of $V$~\cite{whitney}.
If $V$ is reducible, then so too is
$V^*$~\cite{whitney}.

Now suppose that $W$ is an irreducible complex
algebraic set and $V$ its real locus.
$V$ is a real algebraic set with real dimension less than or equal to
the complex dimension of $W$.
Suppose that the real dimension of $V$ equals the
complex dimension of $W$. Then $V^*$ has the
same dimension of $W$. By construction, $V^*$
must be contained in $W$. Since $W$ is irreducible,
any \emph{strict}
algebraic subset must be of lower dimension,
so in fact $W=V^*$. From the previous paragraph,
if $V$ were reducible, then so too would be
$V^*$ and $W$, forming a contradiction. Thus $V$ must be irreducible.

We can apply the same argument to the quasi-projective setting.
We start with $T$, a complex quasi-projective variety, and write its
Zariski closure as $\overline{T}$.
(The Zariski closure of a quasi-projective variety does not affect dimension or reducibility.)
We let $S$ be the real locus of
$T$ and let $\overline{S}^*$  be the smallest complex algebraic set containing $S$.
Again, by construction $\overline{S}^* \subseteq \overline{T}$.
Then we can prove, using dimension and irreducibility as above,  that $\overline{S}^*=\overline{T}$.
Again, reducibility of $S$ would imply reducibility for $\overline{S}^*$ and therefore for $T$.
\end{proof}

\section{Statics}
In this appendix we state some highly specific statics facts.  See
a reference like \cite{williams-tensegrity} or \cite{ivan-statics}.

A \defn{load} $\f$ on a framework $(G,\p)$ is an assignment of vectors $\f_i$ to
each vertex $i\in V(G)$.  A load $\f$ is said to be \defn{resolvable} by $(G,\p)$
if there are numbers $\rho_{ij}$ so that, at each vertex $i$
\[
    \sum_{j\in N(i)} \rho_{ij}(\p_j - \p_i) = -\f_i
\]
An \defn{equilibrium} load is any load resolvable by $(K_n,\p)$.
A framework is called \defn{statically rigid} if every
equilibrium load is resolvable.  The important classical fact is.
\begin{lemma}\label{lem: static inf}
A framework $(G,\p)$ is statically rigid if and only if it is
kinematically rigid.  If $(G,\p)$ is minimally statically rigid,
there is a unique resolution for every equilibrium load.
\end{lemma}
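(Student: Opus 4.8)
The plan is to recast both ``statically rigid'' and ``kinematically rigid'' as statements about the rank of the rigidity matrix $R(\p)$, so that the equivalence becomes a one-line linear-algebra observation; the uniqueness claim then falls out of injectivity of a transpose.

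First I would note that a load $\f$ is resolvable by $(G,\p)$ precisely when the linear system $R(\p)^{t}\rho=-\f$ has a solution $\rho\in\RR^{m}$: with the sign convention of the rigidity matrix, the vertex-$i$ block of $R(\p)^{t}\rho$ is exactly $\sum_{j\in N(i)}\rho_{ij}(\p_{j}-\p_{i})$. Hence the resolvable loads of $(G,\p)$ are the negatives of the elements of the column space of $R(\p)^{t}$, i.e.\ of the row space of $R(\p)$; applying this with $G$ replaced by $K_{n}$, the equilibrium loads are the negatives of the row space of $R_{K_{n}}(\p)$. Since $G\subseteq K_{n}$, the rows of $R(\p)$ sit among the rows of $R_{K_{n}}(\p)$, so the row space of $R(\p)$ is contained in that of $R_{K_{n}}(\p)$. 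Therefore $(G,\p)$ is statically rigid if and only if these two row spaces are equal, if and only if $R(\p)$ and $R_{K_{n}}(\p)$ have the same rank.

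Next I would pin down $\operatorname{rank}R_{K_{n}}(\p)$. For any graph the kernel of $R(\p)$ contains the trivial infinitesimal flexes, whose span has dimension $\binom{d+1}{2}$ when $\p$ affinely spans $\RR^{d}$; for $K_{n}$ the kernel is exactly this space, since a velocity field preserving every pairwise distance to first order is an infinitesimal isometry of the affine hull of $\p$. Thus $\operatorname{rank}R_{K_{n}}(\p)=dn-\binom{d+1}{2}$, which is precisely the rank appearing in the paper's definition of infinitesimal rigidity. Combining with the previous paragraph, $(G,\p)$ is statically rigid if and only if $\operatorname{rank}R(\p)=dn-\binom{d+1}{2}$, i.e.\ if and only if every infinitesimal flex of $(G,\p)$ is trivial, i.e.\ if and only if $(G,\p)$ is kinematically (infinitesimally) rigid. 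For the second assertion, ``minimally statically rigid'' means static rigidity is lost upon deleting any edge; if some row of $R(\p)$ were a linear combination of the others, deleting its edge would not change the row space and so would preserve static rigidity, a contradiction. Hence the rows of $R(\p)$ are linearly independent, $R(\p)^{t}$ is injective, and the resolution $\rho$ of any resolvable (in particular any equilibrium) load $\f$, being the unique solution of $R(\p)^{t}\rho=-\f$, is unique.

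The main point that needs care — really the only obstacle — is the degenerate case in which $\p$ fails to affinely span $\RR^{d}$: there $\operatorname{rank}R_{K_{n}}(\p)<dn-\binom{d+1}{2}$, and ``kinematic rigidity'' must be read as ``every infinitesimal flex of $(G,\p)$ is already an infinitesimal flex of $(K_{n},\p)$'', which is exactly what the row-space comparison of the second paragraph delivers; restricting to configurations of full affine span — the setting in which the lemma is applied, e.g.\ to $(K_{d+1},\p_{H})$ with $\p_{H}$ affinely independent — removes this wrinkle entirely.
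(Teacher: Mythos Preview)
Your proof is correct and follows essentially the same approach as the paper's: both identify resolvable loads with the row space of $R(\p)$, invoke row rank equals column rank, and use that $\operatorname{rank}R_{K_n}(\p)=dn-\binom{d+1}{2}$ to equate static rigidity with the rank condition defining infinitesimal rigidity. Your treatment is more detailed --- in particular you spell out the minimal-rigidity uniqueness argument and flag the non-spanning degenerate case --- but the underlying idea is the same linear-algebra duality the paper cites.
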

\begin{proof}
This follows from linear algebra duality.  The row rank of the
rigidity matrix is the same as the dimension of the space of
resolvable equilibrium loads.  For $K_n$, this is $dn - \binom{d+1}{2}$,
so if $(G,\p)$ is statically rigid, its rigidity matrix has row rank, and
hence column rank this large.
\end{proof}

It's useful to have a different characterization of
equilibrium loads, which is more extrinsic.  Informally,
it says there is no net force and no net torque.
\begin{lemma}\label{lem: exeqload}
A load $\f$ on $(K_n,\p)$is an equilibrium load if and only if
\[
    \sum_{i=1}^n \f_i = 0\qquad
    \text{and}
    \qquad
    \sum_{i=1}^n \f_i \wedge \p_i = 0
\]
\end{lemma}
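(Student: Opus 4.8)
The plan is to go through linear-algebra duality.  By definition an equilibrium load on $(K_n,\p)$ is one that is resolvable by $(K_n,\p)$, and the resolvable loads are exactly the image of the transpose of the rigidity matrix $R_{K_n}(\p)$, hence the orthogonal complement of $\ker R_{K_n}(\p)$ --- the space of infinitesimal flexes of the complete graph on $\p$.  So the statement amounts to identifying that orthogonal complement with the set of loads satisfying the two displayed equations.

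The ``only if'' direction I would dispatch directly, with no hypothesis on $\p$.  If $\f$ is resolved by symmetric coefficients $\rho_{ij}$, so that $\sum_j\rho_{ij}(\p_j-\p_i)=-\f_i$ at every vertex, then summing over $i$ makes the terms cancel in pairs (because $\rho_{ij}=\rho_{ji}$), giving $\sum_i\f_i=0$; and computing $\sum_i\f_i\wedge\p_i$ from the same resolution collapses, edge by edge, to a sum of terms of the form $\rho_{ij}\,(\p_j-\p_i)\wedge(\p_i-\p_j)$, each zero since $v\wedge v=0$.

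For ``if'', the first step is to pin down $\ker R_{K_n}(\p)$.  The infinitesimal isometries of $\RR^d$ restricted to $\p$, i.e.\ the configurations $i\mapsto S\p_i+t$ with $S$ skew-symmetric and $t$ a translation, are always flexes of $K_n$; and when $\p$ affinely spans $\RR^d$ these account for the whole kernel, by a dimension count --- both sides have dimension $\binom{d+1}{2}$, the kernel by the rank computation already behind Lemma~\ref{lem: static inf}.  (This is the one place affine spanning is used, and it holds in every application of the lemma, e.g.\ for the affinely independent $\p_H$ of a $K_{d+1}$.)  Now $\f$ is resolvable iff it is orthogonal to all such flexes: pairing $\f$ against the translations recovers $\sum_i\f_i=0$, and pairing $\f$ against the rotational flexes $i\mapsto S\p_i$ says that the skew-symmetric part of $\sum_i\p_i\f_i^{t}$ has trivial trace pairing with every skew $S$, hence vanishes --- which, under the identification of $\Lambda^2\RR^d$ with skew-symmetric matrices via $u\wedge v\leftrightarrow uv^{t}-vu^{t}$, is exactly $\sum_i\f_i\wedge\p_i=0$.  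Combining the two directions finishes the proof.

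The part that needs care --- and the only real obstacle --- is this last identification: keeping straight the pairing between the flex space and the load space and the correspondence between $2$-vectors and skew-symmetric matrices, so that ``orthogonal to every rotational flex $S\p$'' really is the coordinate-free condition $\sum_i\f_i\wedge\p_i=0$.  Everything else is a one-line cancellation or a dimension count we already have in hand.
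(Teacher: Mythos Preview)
Your proposal is correct and follows essentially the same strategy as the paper: the ``only if'' direction is dispatched by the identical pairwise cancellation argument, and the ``if'' direction goes through linear-algebra duality using that $R_{K_n}(\p)$ has rank $dn-\binom{d+1}{2}$ when $\p$ affinely spans.  The only presentational difference is that the paper finishes the ``if'' direction by a bare dimension count (the equilibrium loads form a subspace of dimension $dn-\binom{d+1}{2}$, the two displayed equations impose $d+\binom{d}{2}=\binom{d+1}{2}$ linear constraints, and one space contains the other), whereas you make the dual picture explicit by identifying $\ker R_{K_n}(\p)$ with the trivial flexes and then computing the pairing against translations and skew-symmetric rotations.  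Your version is a little more work but makes transparent \emph{why} the two conditions are exactly the right ones; the paper's version is quicker but leaves that interpretation implicit.  Both require, and both note, the affine-span hypothesis on $\p$.
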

\begin{proof}
First suppose that $\f$ is an equilibrium load.  Then we have
for appropriate edge weights $\rho_{ij} = \rho_{ji}$
\[
    -\sum_{i=1}^n \f_i =
    - \sum_{i=1}^n \sum_{j\neq i} \rho_{ij}(\p_j - \p_i)
    = -\sum_{i < j} \rho_{ij}[(\p_j - \p_i) + (\p_i - \p_j)]
    = 0
\]
and also
\[
    \sum_{i=1}^n \f_i\wedge \p_i
    =
    \sum_{i=1}^n
    \left(
        \sum_{j\neq i} \rho_{ij}(\p_j - \p_i)
    \right)\wedge \p_i
    =
    \sum_{i = 1}^n \sum_{j\neq i} \rho_{ij}(\p_j\wedge \p_i)
    = \sum_{i < j}\rho_{ij}(\p_i \wedge \p_j + \p_j \wedge \p_i)
    = 0
\]

For the other direction we use linear algebra duality.
The equilibrium loads
are the image of the transposed rigidity matrix $R^t(\p)$.
If $\p$ has $d$-dimensional affine span, this space has dimension
$dn - \binom{d+1}{2}$, since complete graphs are always infinitesimally
rigid if spanning.  On the other hand, the two equations in the
statement of the lemma impose $d + \binom{d}{2}$ linear constraints
on a possible load $\f$.  Hence the spaces are the same.
\end{proof}
Immediately, we get.
\begin{lemma}\label{lem: eq load support}
Suppose that $\f$ is an equilibrium load on a framework $(G,\p)$
that is supported only on $S\subseteq V(G)$.  Then $\f$
is resolvable by $(K_{|S|},\p_S)$.
\end{lemma}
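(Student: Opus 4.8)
The plan is to read this off directly from the extrinsic characterization of equilibrium loads in Lemma~\ref{lem: exeqload}. By definition, the hypothesis that $\f$ is an equilibrium load on $(G,\p)$ means exactly that $\f$ is resolvable by $(K_n,\p)$, so Lemma~\ref{lem: exeqload} applies to $\f$ and yields
\[
    \sum_{i=1}^n \f_i = 0
    \qquad\text{and}\qquad
    \sum_{i=1}^n \f_i\wedge \p_i = 0 .
\]
The key observation is that each of these identities is a sum of terms indexed by vertices in the support of $\f$: since $\f_i = 0$ for every $i\notin S$, the two equations collapse to
\[
    \sum_{i\in S}\f_i = 0
    \qquad\text{and}\qquad
    \sum_{i\in S}\f_i\wedge \p_i = 0 ,
\]
which are precisely the two conditions appearing in Lemma~\ref{lem: exeqload} applied to the restricted load $\f|_S$ on the framework $(K_{|S|},\p_S)$.

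Invoking the ``if'' direction of Lemma~\ref{lem: exeqload} for $(K_{|S|},\p_S)$, we conclude that $\f|_S$ is an equilibrium load for $(K_{|S|},\p_S)$, hence (by the definition of equilibrium load) resolvable by $(K_{|S|},\p_S)$, which is the assertion. There is essentially no obstacle here; the statement is immediate once Lemma~\ref{lem: exeqload} is available. The only point that deserves a moment's care is that, just as Lemma~\ref{lem: exeqload} is stated and proved for an affinely spanning configuration, applying it to $(K_{|S|},\p_S)$ uses that $\p_S$ affinely spans the ambient space; in the intended application $S$ is the vertex set of a $K_{d+1}$ subgraph placed at affinely independent positions, so this holds automatically and nothing further is needed.
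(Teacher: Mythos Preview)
Your proof is correct and follows exactly the paper's approach: both arguments simply observe that dropping the zero terms from the extrinsic characterization in Lemma~\ref{lem: exeqload} leaves the constraints unchanged. You have additionally flagged the implicit affine-spanning hypothesis needed for the ``if'' direction of Lemma~\ref{lem: exeqload}, which the paper's one-line proof glosses over; your remark that this is satisfied in the intended application (where $S$ is the vertex set of an affinely independent $K_{d+1}$) is apt.
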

\begin{proof}
Throwing away zero vectors doesn't change the constraints in
Lemma \ref{lem: exeqload}.
\end{proof}


\def\v{\oldv}
\bibliographystyle{abbrvlst}
\bibliography{framework}

\end{document}